\crefname{section}{Section}{Sections}
\crefname{subsection}{\S}{\S\S}
\crefname{definition}{definition}{definitions}
\crefname{ex}{example}{examples}
\crefname{remark}{remark}{remarks}
\crefname{convention}{convention}{conventions}
\crefname{lemma}{lemma}{lemmas}
\crefname{theorem}{theorem}{theorems}
\crefname{assumption}{assumption}{Assumptions}
\crefname{equation}{}{}
\newtheorem{proposition}{Proposition}[section]
  \newtheorem{theorem}[proposition]{Theorem}
  \newtheorem{corollary}[proposition]{Corollary}
  \newtheorem{lemma}[proposition]{Lemma}
\theoremstyle{definition}
  \newtheorem{definition}[proposition]{Definition}
  \newtheorem{remark}[proposition]{Remark}
  \newtheorem{example}[proposition]{Example}
\newcommand{\cst}{\ifmmode\mathrm{C}^*\else{$\mathrm{C}^*$}\fi}
\newcommand\bZ{\mathbb Z}
\newcommand\cA{\mathcal A}
\newcommand\cB{\mathcal B}
\newcommand\cC{\mathcal C}
\newcommand\cN{\mathcal N}
\newcommand\cV{\mathcal V}
\newcommand{\kk}{\mathbbm{k}}
\newcommand{\CC}{\mathbb{C}}
\newcommand{\RR}{\mathbb{R}}
\newcommand{\id}{\mathrm{id}}
\newcommand{\I}{\mathds{1}}
\newcommand{\ww}{\mathrm{W}}
\DeclareMathOperator{\C}{C}
\numberwithin{equation}{section}
\def\labelitemi{$\blacktriangleright$}
\author{Alexandru Chirvasitu}
\address{Department of Mathematics, University at Buffalo, Buffalo, NY 14260-2900, USA}
\email{achirvas@buffalo.edu}
\author{Pawe{\l} Kasprzak}
\address{Department of Mathematical Methods in Physics, Faculty of Physics, University of Warsaw, Poland}
\email{pawel.kasprzak@fuw.edu.pl}
\author{Piotr Szulim}
\address{Faculty of Physics, University of Warsaw, Poland}
  \email{pw.szulim@student.uw.edu.pl}
\title{Integrals in left coideal subalgebras and group-like projections}
\subjclass[2010]{Primary: 46L65 Secondary: 43A05, 46L30, 60B15, 16T05, 16T15}
 \keywords{group-like projection, left coideal subalgebra, Taft Hopf algebra}
\begin{document}

\begin{abstract}We develop a theory of right group-like projections in Hopf algebras linking them with the theory of left coideal subalgebras with two sided counital integrals. Every right group-like projection is associated with a left coideal subalgebra, maximal among the ones containing the given group-like projection as an integral, and we show that that subalgebra is finite dimensional.  We observe that in a semisimple Hopf algebra $H$ every left coideal subalgebra has an integral and we prove a 1-1 correspondence between right group-like projections and left coideal subalgebras of $H$. We provide a number of equivalent conditions for a right group-like projections to be left group-like projection and prove a 1-1 correspondence between semisimple left coideal subalgebras preserved by the squared antipode and two sided group-like projections.

  We also classify left coideal subalgebras in Taft Hopf algebras $H_{n^2}$ over a field $\kk$, showing that the automorphism group splits them into \begin{itemize}
  \item a class of cardinality $|\kk|-1$ of semisimple ones which correspond to right group-like projections which are not two sided;
  \item finitely many semisimple singletons, each corresponding to two sided group-like projection; the number of those singletons for $H_{n^2}$ is equal to the number of divisors of $n$;
  \item finitely many singletons, each non-semisimple and admitting no right group-like projection; the number of those singletons for $H_{n^2}$ is equal to the number of divisors of $n$;
  \end{itemize} In particular we answer the question of Landstad and Van Daele showing that there do exist right group-like projections which are not left group-like projections.
\end{abstract}

\maketitle

\setcounter{tocdepth}{1}
\tableofcontents

\newlength{\sw}
\settowidth{\sw}{$\scriptstyle\sigma-\text{\rm{weak closure}}$}
\newlength{\nc}
\settowidth{\nc}{$\scriptstyle\text{\rm{norm closure}}$}
\newlength{\ssw}
\settowidth{\ssw}{$\scriptscriptstyle\sigma-\text{\rm{weak closure}}$}
\newlength{\snc}
\settowidth{\snc}{$\scriptscriptstyle\text{\rm{norm closure}}$}
\renewcommand{\labelitemi}{$\bullet$}
\section{Introduction}

In order to explain the classical motivation standing behind the concept of group like projection let us fix a finite group $G$ and consider  Hopf algebra of complex functions  $\C(G)$  on $G$. It is easy to check that given a subgroup $H\subset G$, its indicator function $\I_H\in \C(G)$ is a projection satisfying \begin{equation}\label{def_gl}\Delta(\I_H)(\I_G\otimes\I_H) = \I_H\otimes \I_H.\end{equation} Conversely,  a non-zero  projection $P\in\C(G)$ satisfying $\Delta(P)(\I\otimes P) = P\otimes P$ is of the form  $P=\I_H$. In particular    $\Delta(P)(\I\otimes P) = P\otimes P$ if  and only if $P = \I_H$ if and only if $\Delta(P)(P\otimes\I) = P\otimes P$. It turns out that  there are Hopf algebras admitting an element $P$ satisfying \[\Delta(P)(\I\otimes P) = P\otimes P = (\I\otimes P)\Delta(P)\] but not satisfying \[\Delta(P)(P\otimes \I) = P\otimes P= (P\otimes \I)\Delta(P),\]  thus we use the terminology of right and left group-like projections for the elements satisfying the former or the latter equation respectively.

A right  group-like projection $P$  in a Hopf algebra $\mathcal{A}$ cannot, in general, be matched with a quotient Hopf subalgebra, thus we do not have a full analogy with the classical case. Remarkably however, $P$ can be matched with a left coideal subalgebra denoted by $\cN_P\subset \cA$, which  is the analog of the algebra of functions on the quotient space $G/H$ from the classical context.

$\cN_P$ is characterized as being a maximal among all left coideal subalgebras $\cN\subset\cA$ containing $P$ and such that $Px = xP = \varepsilon(x)P$ for all $x\in\cN$. Denoting   $\cV_P = \{(\mu\otimes \id)(\Delta(P)):\mu\in\cA\}$  we clearly have $\cV_P\subset \cN_P$.  Having proved    that $\cN_P$ is finite dimensional (c.f. \Cref{cor.fd}) allowed us  to use \cite{Kopp} to conclude that  $\cN_P = \cV_P$ if and only if $\cN_P$ is quasi-Frobenius, \Cref{qf_fun}. In particular this holds   in the following three cases 
\begin{itemize}
    \item $P$ is two sided group-like projection, \Cref{coipr};
    \item $\cN_P$ is semisimple, \Cref{le.n=np};
    \item $\cA$ is weakly finite, \Cref{th.wf-hopf}. 
\end{itemize}
 We also prove that that  $\cN_P = \cV_P$ if  and only if $\I_{\cA}\in\cV_P$. 

The semisimplicity is the property which turns out to link nicely with the theory of group-like projections in the following two fundamental ways:
\begin{itemize}
    \item if $\cA$ is a semisimple Hopf algebra then we have   a 1-1 correspondence between left coideal subalgebras of $\cA$ and right group-like projections in $\cA$, see \Cref{th.proj-coid};
    \item $P\mapsto \cN_P$ restricts to a one two one correspondence between two sided group-like projections in $\cA$ and  semisimple left coideals subalgebras $\cN\subset \cA$ preserved by the square of the antipode, see \Cref{th.2sd-s2}. 
\end{itemize}
Using \cite{Kopp} we also provide a necessary and sufficient condition for $\cN_P$ to be semisimple, see \Cref{th.np-ss}. 
The results described above are contained in \Cref{Sec_gen}.

In \Cref{se.tft}   we classify left  coideals subalgebras in Taft Hopf   algebras $H_{n^2}$ over a field $\kk$, where   $n\in\mathbb{N}$. It turns our that the automorphism group of $H_{n^2}$  splits them into   
\begin{itemize}
    \item a class of cardinality $|\kk|-1$ of semisimple ones which  correspond to  right group-like  projections which are not two sided; 
    \item  a  finite number of semisimple  singletons  each  corresponding to two sided group-like projection; the number of those singletons  for $H_{n^2}$ is equal to the number of divisors of $n$;  
   \item  finite number of singletons each being  non-semisimple and   not admitting a right group-like projection; the number of those singletons  for $H_{n^2}$ is equal to the number of divisors of $n$.
\end{itemize}  
Every coideal and every group-like projection has an explicit description.
Our classification  enabled us to answer   the  question  of Landstad and Van Daele showing that there do exist   right group-like projections which are not left group-like projections (already in $4$-dimensional Taft algebra $H_4$).

In \Cref{se.pre}   we define left, right and two sided group-like projections together with their shifts and we prove a few preliminary results. 

For background on Hopf algebras we refer to \cite{mont,Radford_book}. We shall freely use the concept of semisimple and cosemisimple Hopf algebra, coradical filtration, primitive elements etc. It is  assumed that the reader is  familiar with the basics of the theory of modules over rings, see e.g. \cite{lam-lec}. We shall make occasional use of such concepts as injective modules, faithfully flat algebras over subalgebras, quasi-Frobenius algebras and weakly finite algebras.

The study initiated in this paper is continued  in \cite{Kasp_coint} where the theory of generalized integrals and cointegrals on a left coideal subalgebra is developed. In particular the finite dimensionality result of  \Cref{cor.fd}   holds also in the case when a given left coideal subalgebra $\cN\subset \cA$  equipped with a non-zero multiplicative functional $\mu\in\cN$ admits a non-zero $\Lambda\in\cN$ satisfying $a\Lambda = \mu(a)\Lambda$ for all $a\in\cN$, \cite[Theorem 3.7.]{Kasp_coint}. The issue of existence of such $\Lambda\in\cN$ turns out to be a non-trivial matter, linked e.g. with the   existence of generalized  cointegrals on $\cN$.  
\section{Preliminaries}\label{se.pre}

We shall assume that $(\mathcal{A},\Delta,S,\varepsilon)$ is a Hopf algebra (over an arbitrary field $\kk$) with an invertible antipode. We shall often write that $\mathcal{A}$ is a Hopf algebra, having in mind that comultiplication $\Delta$, coinverse $S$ and counit $\varepsilon$ are fixed. We shall use the Sweedler's notation: $\Delta(x) = x_{(1)}\otimes x_{(2)}$.  A vector subspace $\cV\subset \cA$ is said to be a left coideal if $\Delta(\cV)\subset \cA\otimes \cV$.  A unital subalgebra $\cN\subset \mathcal{A}$ which is a left coideal is said to be a left coideal subalgebra.  A Hopf surjection $\pi:\cA\to \cB$ can be assigned with a left coideal subalgebra $\cN_\pi =\{x\in\cA:(\id\otimes \pi)(\Delta(x)) = x\otimes\I_\cB\}$. It is easily seen that $\cN_\pi$ is preserved by the adjoint action $\textrm{Ad}:\cA\otimes \cA\to \cA$ of $\cA$ on itself, where $\textrm{Ad}(a\otimes x) = a_{(1)}xS(a_{(2)})$.  A left coideal subalgebra $\cN$ is said to be normal if the adjoint action restricts to $\mathcal{A}\otimes \cN\ni a\otimes x\mapsto a_{(1)}xS(a_{(2)})\in\cN$. It is well known that when $\cA$ is left faithfully flat over a normal left coideal subalgebra then $\cN=\cN_\pi$ for a faithfully coflat Hopf projection $\pi$, unique in the sense that its kernel is uniquely determined by $\cN$.

Let $(\cC,\Delta_{\cC},\varepsilon)$ be a coalgebra and let $\pi:\cA\to\cC$ be surjective map of coalgebras. If in addition $\cC$ is equipped with a left $\cA$-module structure $\cA\otimes\cC\to \cC$ which is a coalgebra map and $\pi:\cA\to\cC$ is $\cA$ linear then we say that $\cC$ is a left module coalgebra quotient. Given a left module coalgebra quotient $\pi:\cA\to\cC$ we assigned to it a left coideal subalgebra of $\cA$ \[\cN_\pi=\{x\in\cA:(\id\otimes\pi)(\Delta(x)) = x\otimes\pi(\I)\}.\] Conversely given a left coideal subalgebra $\cN$ we assign to it a left module coalgebra quotient $\pi:\cA\to\cC_{\cN}$ where $\cC_{\cN} = \cA/(\cA\cN^-)$ and $\cN^- = \cN\cap\ker\varepsilon$. If $\cA $ is left $\cN$-faithfully flat then $\cN_{\cC_{\cN}} = \cN$. Let us move on to the main subject of this paper.
\begin{definition}
Let $\mathcal{A}$ be a Hopf algebra and $P\in \mathcal{A}$ a non-zero projection. We say that 
\begin{enumerate}
    \item $P$ is a right group-like projection if 
    \[\Delta(P)(\I\otimes P) = P\otimes P = (\I\otimes P)\Delta(P)\]
    \item $P$ is a left group-like projection if 
    \[\Delta(P)(P\otimes\I) = P\otimes P = (P\otimes \I)\Delta(P)\]
    \item $P$ is a two sided group-like projection (or simply a group-like projection) if it is a left and right group-like projection. 
\end{enumerate}
\end{definition}
In the course of the paper we shall use the following operators on $\cA\otimes\cA$  (the formulas below are extended  to an arbitrary element of $\cA\otimes \cA$ be linearity):
\begin{equation}\label{wdef}
\begin{aligned}
\ww_{lr}(x\otimes y)&= x_{(1)}\otimes x_{(2)}y,\\
\ww_{ll}(x\otimes y)&= x_{(1)}\otimes yx_{(2)},\\
\ww_{rl}(x\otimes y)&= xy_{(1)}\otimes y_{(2)},\\
\ww_{rr}(x\otimes y)&= y_{(1)}x\otimes y_{(2)}.
\end{aligned}
\end{equation}
In our notation $\ww_{-,-}$ the first index establishes the position of the tensorand to which the comultiplication is applied whereas the second index corresponds to  the position of the remaining tensorand   in the multiplication. The operators $\ww_{-,-}$ are invertible and we have 
\begin{equation}\label{wdefinv}
\begin{aligned}
\ww^{-1}_{lr}(x\otimes y)&= x_{(1)}\otimes S(x_{(2)})y\\
\ww^{-1}_{ll}(x\otimes y)&= x_{(1)}\otimes yS^{-1}(x_{(2)})\\
\ww^{-1}_{rl}(x\otimes y)&= xS(y_{(1)})\otimes y_{(2)}\\
\ww^{-1}_{rr}(x\otimes y)&= S^{-1}(y_{(1)})x\otimes y_{(2)}.
\end{aligned}
\end{equation}
Note that $P\in\cA$ is a right group-like projection if and only if \[\ww_{lr}(P\otimes P) = P\otimes P = \ww_{ll}(P\otimes P).\] Similarly $P$ is a left group-like projection if and only if 
\[\ww_{rr}(P\otimes P) = P\otimes P = \ww_{rl}(P\otimes P).\]

As mentioned in the introduction, if $G$ is a finite group and $P\in\C(G)$ is a group like projection then $P=\I_H$ where $H\subset G$ is a subgroup of $G$. The projection $Q$ entering the next lemma is the analog of $\I_{Hg}$ for some $g\in G$. 
\begin{lemma}\label{basic_lem}
Let $y\in\mathcal{A}$ and $Q\in\mathcal{A}$ be a non-zero projection such that 
\begin{equation}\label{gleq}(\I\otimes Q)\Delta(Q) = \Delta(Q)(\I\otimes Q)= y\otimes Q.\end{equation} Then $y$  is a right group-like projection. Moreover if $S^2(Q) = Q$ then $S^2(y) = y$. 
\end{lemma}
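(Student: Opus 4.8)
The plan is to first rewrite \cref{gleq} in the language of the operators $\ww_{lr},\ww_{ll}$ and then to extract the group-like relations for $y$ by playing the hypothesis against itself in three tensor legs. Writing out the hypothesis, $\ww_{lr}(Q\otimes Q)=Q_{(1)}\otimes Q_{(2)}Q=y\otimes Q$ and $\ww_{ll}(Q\otimes Q)=Q_{(1)}\otimes QQ_{(2)}=y\otimes Q$, so the single assumption says exactly that both $\ww_{lr}$ and $\ww_{ll}$ send $Q\otimes Q$ to $y\otimes Q$. Since $Q\neq 0$, I would fix once and for all a functional $\omega\in\mathcal{A}^{*}$ with $\omega(Q)=1$; then any identity of the shape $u\otimes Q=v\otimes Q$ in $\mathcal{A}^{\otimes 3}$, with the last leg the literal element $Q$, may be cancelled to $u=v$ by applying $\id\otimes\id\otimes\omega$. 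Applying $\varepsilon\otimes\id$ to the hypothesis and using $Q^{2}=Q$ records $\varepsilon(y)=1$, in particular $y\neq 0$.

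The core of the argument is the pentagon identity: writing $T_{ij}$ for an operator $T$ acting on the $i,j$ tensor legs, each of $T=\ww_{lr}$ and $T=\ww_{ll}$ satisfies $T_{23}T_{12}=T_{12}T_{13}T_{23}$ (a one-line Sweedler check). I would apply each pentagon to $Q\otimes Q\otimes Q$. On the left, $T_{23}T_{12}(Q\otimes Q\otimes Q)$ yields $y\otimes Q\otimes Q$ and then $y\otimes y\otimes Q$ by two successive uses of the hypothesis. On the right, in $T_{12}T_{13}T_{23}(Q\otimes Q\otimes Q)$ the factors $T_{23}$ and $T_{13}$ act on legs carrying the pair $(Q,Q)$, so the hypothesis applies and produces $y\otimes y\otimes Q$; the outermost $T_{12}$ then acts on the pair $(y,y)$ by its generic formula, giving $T(y\otimes y)\otimes Q$. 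Equating the two sides and cancelling the clean $Q$ gives $\ww_{lr}(y\otimes y)=y\otimes y$ and $\ww_{ll}(y\otimes y)=y\otimes y$, that is $\Delta(y)(\I\otimes y)=y\otimes y=(\I\otimes y)\Delta(y)$.

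To finish the first assertion it remains to see that $y$ is a projection: applying $\varepsilon\otimes\id$ to $(\I\otimes y)\Delta(y)=y\otimes y$ gives $y^{2}=\varepsilon(y)y=y$, and together with $y\neq 0$ this shows $y$ is a right group-like projection. For the final claim I would use that $S^{2}$ is a bialgebra automorphism, so $\Delta\comp S^{2}=(S^{2}\otimes S^{2})\comp\Delta$ and $S^{2}$ is multiplicative. Applying $S^{2}\otimes S^{2}$ to $(\I\otimes Q)\Delta(Q)=y\otimes Q$ turns the left-hand side into $(\I\otimes S^{2}(Q))\Delta(S^{2}(Q))$, which equals $(\I\otimes Q)\Delta(Q)=y\otimes Q$ once $S^{2}(Q)=Q$ is invoked, while the right-hand side becomes $S^{2}(y)\otimes Q$; cancelling the clean $Q$ yields $S^{2}(y)=y$.

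The step I expect to be the genuine obstacle is the passage from the hypothesis to the relations for $y$ alone. The difficulty is that $y$ only ever appears entangled with $Q$ in the second leg, and the naive moves — applying $\varepsilon$ or $\Delta$ and contracting — all return tautologies, since the hypothesis is equivariant under them. Two ingredients break the deadlock: (a) the pentagon, which combines the hypothesis in three different legs and is what actually manufactures the product $y\otimes y$; and (b) discarding the spectator $Q$ by a functional not vanishing on $Q$ — crucially one cannot use $\varepsilon$ here, since $\varepsilon(Q)$ may be $0$ (indeed $Q$ is modelled on $\I_{Hg}$, whose counit vanishes when $g\notin H$).
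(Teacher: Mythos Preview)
Your argument is correct. The route differs from the paper's in presentation rather than substance: the paper tensors the desired identity $(\I\otimes y)\Delta(y)=y\otimes y$ with a spectator $Q$ on the right and then unwinds via coassociativity of $\Delta(Q)$, substituting the hypothesis at each stage; you instead package the same coassociativity as the pentagon identity $T_{23}T_{12}=T_{12}T_{13}T_{23}$ for $T=\ww_{lr},\ww_{ll}$ and evaluate on $Q^{\otimes 3}$. Both are the same computation read in different coordinates. Your version has the advantage of treating the two cases $\ww_{lr}$ and $\ww_{ll}$ uniformly and of making explicit the device for cancelling the spectator $Q$ (a functional $\omega$ with $\omega(Q)=1$ rather than the counit, which as you note need not be nonzero on $Q$). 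The paper's version is slightly more self-contained in that it does not invoke the pentagon as a named identity. The paper obtains that $y$ is a projection from $((\I\otimes Q)\Delta(Q))^2=(\I\otimes Q)\Delta(Q)$ directly, while you get it from $\varepsilon(y)=1$ and $y^2=\varepsilon(y)y$; both are fine. Your treatment of the $S^2$-claim via the bialgebra-automorphism property is correct and in fact more explicit than the paper, which leaves that step to the reader.
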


\begin{proof} 
The identity  \begin{equation}\label{left2}(\I\otimes y) \Delta(y) = y\otimes y \end{equation}
is the consequence of the following computation
\begin{align*}
  ((\I\otimes y)\Delta(y))\otimes Q & =(\I\otimes y\otimes\I)(\Delta\otimes \id)(y\otimes Q)\\ & =(\I\otimes y\otimes Q)(\Delta\otimes\id)(\Delta(Q)) \\&= (\I\otimes\I\otimes Q)(\I\otimes\Delta(Q))(\id\otimes\Delta)(\Delta(Q))\\&=  (\I\otimes\I\otimes Q)(\id\otimes\Delta)( (\I\otimes Q)\Delta(Q))\\&=  (\I\otimes\I\otimes Q)(\id\otimes\Delta)( y\otimes Q)\\&=(y\otimes y)\otimes Q.
\end{align*}
Similarly we check that $\Delta(y)(\I\otimes y) = y\otimes y$. 
 Since  $\left((\I\otimes Q)\Delta(Q)\right)^2 = (\I\otimes Q)\Delta(Q)$ we see that  $(Q\otimes y)$ and thus $y$ are projections.
\end{proof}
\begin{remark} \label{remls}
There is a left version of \Cref{basic_lem} with $Q$ and $y\in\mathcal{A}$  satisfying $\Delta(Q)(Q\otimes\I) = (Q\otimes\I) \Delta(Q) = Q\otimes y$ and $y$ being a left group-like projection. 
In particular if $P\in\mathcal{A}$ is a right  group-like projection admitting a non-zero  $Q\in\mathcal{A}$ satisfying $\Delta(Q)(Q\otimes\I) = Q\otimes P = (Q\otimes\I) \Delta(Q)$ then $P$ is a two sided group-like projection.  
\end{remark}
\begin{definition}\label{shifts_def}
Let $P\in\mathcal{A}$ be a right group-like projection. We say that a projection $Q\in\mathcal{A}$ is a right  shift  of $P$ if 
\begin{equation}\label{QPcond1}\Delta(Q)(\I\otimes Q) = P\otimes Q = (\I\otimes Q) \Delta(Q).\end{equation} 

Let $P$ be a left group-like projection. We say that a projection $Q\in\mathcal{A}$ is a left   shift  of $P$ if 
\[\Delta(Q)(Q\otimes \I) = Q\otimes P = (Q\otimes\I) \Delta(Q).\]
\end{definition}
\begin{lemma}\label{lemma_shifts}
Let $P,Q\in\mathcal{A}$ be non-zero projections. The following conditions are equivalent 
\begin{itemize}
    \item $Q$ is a right shift of a right group-like projection $P$;
    \item $P$ and $Q$ satisfy $(\I\otimes S^{-1}(Q))\Delta(P) = Q\otimes S^{-1}(Q)$ and $\Delta(P) (\I\otimes S(Q)) = Q\otimes S(Q) $.
\end{itemize}
In particular if $Q$ is a right shift of $P$ then there exists a functional $\mu\in\mathcal{A}^*$ such that $Q = (\id\otimes\mu)(\Delta(P))$. 
Moreover $S^2(Q) = Q$ if and only if $S^2(P) = P$.
\end{lemma}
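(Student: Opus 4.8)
The plan is to pass the definition of a right shift through the invertible operators $\ww_{lr},\ww_{ll}$ of \eqref{wdef}, whose inverses are recorded in \eqref{wdefinv}, turning it into two explicit Sweedler identities, and then to treat the $S^2$–clause on its own. Unwinding \Cref{shifts_def} exactly as the group-like projections are unwound after \eqref{wdefinv}, saying that $Q$ is a right shift of the right group-like projection $P$ is the same as
\[
\ww_{lr}(Q\otimes Q)=P\otimes Q=\ww_{ll}(Q\otimes Q).
\]
Applying $\ww_{lr}^{-1}$ to the first equality and then $\id\otimes S^{-1}$ to the second leg turns it into
\[
(\mathrm C)\colon\quad (\I\otimes S^{-1}(Q))\Delta(P)=Q\otimes S^{-1}(Q),
\]
while applying $\ww_{ll}^{-1}$ to the second equality followed by $\id\otimes S$ turns it into
\[
(\mathrm D)\colon\quad \Delta(P)(\I\otimes S(Q))=Q\otimes S(Q);
\]
these are exactly the two identities of the second bullet. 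Every operator used is a bijection, so the computation reverses and recovers $\Delta(Q)(\I\otimes Q)=(\I\otimes Q)\Delta(Q)=P\otimes Q$ from $(\mathrm C)$ and $(\mathrm D)$. Since $Q$ is a nonzero projection, \Cref{basic_lem} (with $y=P$) then guarantees that $P$ is a right group-like projection, so the second bullet indeed returns the first. This is the routine half.

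For the functional, note that $S$ is injective, so $S(Q)\neq0$; I would pick $\phi\in\cA^*$ with $\phi(S(Q))=1$ and set $\mu=\phi(\,\cdot\,S(Q))$. Applying $\id\otimes\phi$ to $(\mathrm D)$ gives $P_{(1)}\phi(P_{(2)}S(Q))=Q\,\phi(S(Q))=Q$, that is $Q=(\id\otimes\mu)(\Delta(P))$. For the $S^2$–statement one implication is then free: a right shift satisfies $(\I\otimes Q)\Delta(Q)=\Delta(Q)(\I\otimes Q)=P\otimes Q$, which is precisely the hypothesis of \Cref{basic_lem} with $y=P$, and the ``moreover'' recorded there yields $S^2(Q)=Q\Rightarrow S^2(P)=P$.

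The converse, $S^2(P)=P\Rightarrow S^2(Q)=Q$, is the genuinely delicate point, and I expect it to be the main obstacle. The difficulty is that shifts are not unique—classically $Q$ ranges over the indicators of the cosets $Hg$—so $Q$ cannot be reconstructed from $P$ alone; moreover, merely applying $S^2\otimes S^2$ to $(\mathrm C)$ and $(\mathrm D)$ only reproves that $S^2(Q)$ is again a right shift of $P$ (using $(S^2\otimes S^2)\Delta(P)=\Delta(S^2(P))=\Delta(P)$), which on its own relabels $Q$ as $S^2(Q)$ and gives nothing. The argument must instead exploit the interplay of $(\mathrm C)$, $(\mathrm D)$ and the idempotency of $Q$ together with the bare restatement $P_{(1)}\otimes S^{-2}(P_{(2)})=S^2(P_{(1)})\otimes P_{(2)}$ of $S^2(P)=P$. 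Concretely I would play the two identities against one another: $(\mathrm C)$ pins the first tensorand of $(\I\otimes S^{-1}(Q))\Delta(P)$ to be exactly $Q$, and I would transport $(\mathrm D)$ by the above antipode twist so as to exhibit that same first tensorand as $S^{-2}(Q)$; slicing off the nonzero second leg $S^{-1}(Q)$ then forces $Q=S^{-2}(Q)$, i.e.\ $S^2(Q)=Q$.

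The care required in the last step is precisely to route the $S^2$–invariance of $P$ through one tensor leg while the $Q$–information is carried by the other, so that the twist genuinely compares $Q$ with $S^{-2}(Q)$ rather than collapsing into the tautology that $S^{\pm2}(Q)$ is again a shift; keeping the multiplication orders straight under $S^{\pm2}$ is where the bookkeeping is easy to get wrong. As a sanity check on whether the mechanism has been correctly isolated, I would verify it against the Taft algebras of \Cref{se.tft}, where $S^2$ rescales the nilpotent generator, so that $S^2$–invariance of $P$ really does confine the admissible shifts to the $S^2$–fixed part.
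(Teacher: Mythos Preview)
Your argument is correct and coincides with the paper's: the equivalence via $\ww_{lr}^{-1}$, $\ww_{ll}^{-1}$ is exactly what the paper does, and your use of \Cref{basic_lem} for the reverse implication and for $S^2(Q)=Q\Rightarrow S^2(P)=P$ is on the mark (indeed slightly more careful than the paper, which leaves the ``$P$ is right group-like'' half of the reverse direction implicit).

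For the converse $S^2(P)=P\Rightarrow S^2(Q)=Q$, which you flag as delicate and only sketch, the paper executes precisely your plan in one line, and the missing concrete step is the two-sided symmetrization your ``idempotency of $Q$'' hint points to. From $(\mathrm C)$ and $S^{-1}(Q)^2=S^{-1}(Q)$ one has
\[
Q\otimes S^{-1}(Q)=(\I\otimes S^{-1}(Q))\,\Delta(P)\,(\I\otimes S^{-1}(Q));
\]
applying $S^2\otimes S^2$ (a homomorphism, with $(S^2\otimes S^2)\Delta(P)=\Delta(S^2 P)=\Delta(P)$) yields
\[
S^2(Q)\otimes S(Q)=(\I\otimes S(Q))\,\Delta(P)\,(\I\otimes S(Q)),
\]
and the right side equals $Q\otimes S(Q)$ by $(\mathrm D)$ together with $S(Q)^2=S(Q)$. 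Slicing off the nonzero leg $S(Q)$ gives $S^2(Q)=Q$. So your worry that $(\mathrm C)$ and the $S^{\pm 2}$-transport of $(\mathrm D)$ sit on opposite sides of $\Delta(P)$ is resolved exactly by sandwiching; the step is less delicate than you feared.
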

The condition $S^2(P)=P$ for a right  group-like projection $P$ turns out to be equivalent with $P$ being a two sided group-like projection, c.f. \Cref{th.summary}. 
\begin{proof}[Proof of \Cref{lemma_shifts}]
In order to get the equivalence of the first and the second bullet point it is enough to  write  \Cref{QPcond1} in the form 
\[\ww_{lr}(Q\otimes Q) = P\otimes Q = \ww_{ll}(Q\otimes Q)\] and use the inverse formulas \Cref{wdef}. 
The existence of $\mu\in\mathcal{A}^*$ satisfying $(\id\otimes\mu)(\Delta(P)) =Q$ is  obvious. 

If $S^2(Q) = Q$ then $(S^2\otimes S^2)(\Delta(Q)(\I\otimes Q)) = \Delta(Q)(\I\otimes Q)$ which implies that  $S^2(P)\otimes Q = P\otimes Q$ and thus $S^2(P) = P$. For the converse implication suppose that $S^2(P) = P$.  Applying  $S^2\otimes S^2$ to the equality  \[Q\otimes S^{-1}(Q) = (\I\otimes S^{-1}(Q))\Delta(P)(\I\otimes S^{-1}(Q))\] we get $S^2(Q)\otimes S(Q) = (\I\otimes S(Q))\Delta(P)(\I\otimes S(Q)) = Q\otimes S(Q)$ and we conclude that $S^2(Q) = Q$. 
\end{proof}

\section{General results}\label{Sec_gen}
In the first part of this section we shall derive a number of equivalent conditions for a right group-like projection to be two sided, c.f. \Cref{Sec_two_sided}. In the second part \Cref{Sec_left_coideal}  we shall relate the theory of (right) group-like projections with the theory of left  coideal subalgebras  with integrals developed in \cite{Kopp}. The third subsection \Cref{subsec_misc} is concerned with the normality, faithful flatness and other properties related with left coideals subalgebras assigned to right group-like projections. Finally in the fourth subsection \Cref{se.ss-coss} we discuss right group-like projections in a semisimple Hopf algebra $\cA$. 
\subsection{Two sided group-like projection}\label{Sec_two_sided}
 Let us begin with the following auxiliary lemma.
\begin{lemma}\label{abprop} Let $\cA$ be a Hopf algebra 
and $a,b\in\mathcal{A}$. The following two conditions are equivalent
\begin{itemize}
\item $\Delta(a)(\I\otimes b) = \Delta(b)(a\otimes \I)$
\item $(b\otimes\I)(\id\otimes S)(\Delta(a)) = a\otimes b$
\end{itemize}
Moreover if one of these equivalent conditions holds then we have $ab = \varepsilon(a)b=S(a)\varepsilon(b)$. 
\end{lemma}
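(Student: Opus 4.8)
The plan is to establish the equivalence by exploiting the antipode axioms, treating each implication as an application of the convolution identities $a_{(1)}S(a_{(2)}) = \eps(a)\I = S(a_{(1)})a_{(2)}$, and then deduce the multiplicative consequence from the second (cleaner) characterization.

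First I would prove that the first bullet implies the second. Starting from $\Delta(a)(\I\otimes b) = \Delta(b)(a\otimes\I)$, written in Sweedler notation as $a_{(1)}\otimes a_{(2)}b = b_{(1)}a\otimes b_{(2)}$, the idea is to apply $\id\otimes S$ and then multiply the two tensor legs together in a suitable order so that the antipode collapses a comultiplication. Concretely, applying $\id\otimes S$ to the left-hand side gives $a_{(1)}\otimes S(b)S(a_{(2)})$; multiplying on the left by $(b\otimes\I)$ and then contracting via the multiplication map, one hopes to recognize $a_{(1)}S(a_{(2)}) = \eps(a_{(2)})$-type cancellation. The subtlety is routing the $b$ so it survives. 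I expect the correct manipulation is to apply $\id\otimes S$ to the hypothesis and then apply $\id\otimes(\text{mult})$ after inserting $b$, using that $S$ is an anti-homomorphism, to land exactly on $a\otimes b$. The reverse implication, from the second bullet $(b\otimes\I)(\id\otimes S)(\Delta(a)) = a\otimes b$ back to the first, should follow by applying $\id\otimes\Delta$ and reassembling with the antipode, or more efficiently by applying one of the invertible operators $\ww_{-,-}$ from \eqref{wdef}: since $\ww_{lr}$ and its inverse encode precisely the passage $x\otimes y \mapsto x_{(1)}\otimes S(x_{(2)})y$, both bullet points can be read as two descriptions of the same element of $\cA\otimes\cA$ under an invertible change of variables, which makes the equivalence essentially formal once the right operator is identified.

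For the final multiplicative consequence $ab = \eps(a)b = S(a)\eps(b)$, I would work from the second bullet. Applying $\eps\otimes\id$ to $(b\otimes\I)(\id\otimes S)(\Delta(a)) = a\otimes b$ gives $\eps(b_{(1)})\cdots$ in a form that, after using $\eps(a_{(1)})S(a_{(2)}) = S(a)$, should yield $\eps(b)S(a) = \eps(a)b$ on comparing the two legs; alternatively, applying $\id\otimes\eps$ to the first bullet and using $a_{(1)}\eps(a_{(2)}) = a$ gives one of the desired equalities directly. To get $ab = \eps(a)b$, I would multiply the two tensor factors of the hypothesis $a_{(1)}\otimes a_{(2)}b = b_{(1)}a\otimes b_{(2)}$ together: the left side becomes $a_{(1)}a_{(2)}b$, which is not immediately $\eps(a)b$, so instead I would multiply after applying $\id\otimes\eps$, collapsing $a_{(2)}b \mapsto$ the relevant scalar, to extract $ab$ and $\eps(a)b$.

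The main obstacle I anticipate is bookkeeping the order of multiplication when the antipode is inserted: because $S$ reverses products, the natural-looking contraction can produce $S(a)b$ or $bS(a)$ rather than the intended $ab$, and getting all three identities $ab=\eps(a)b=S(a)\eps(b)$ simultaneously requires choosing which leg to collapse with $\eps$ and which with multiplication. I would resolve this by deriving the two equalities separately, obtaining $ab = \eps(a)b$ by applying $\id\otimes\eps$ to the first bullet and $S(a)\eps(b) = \eps(a)b$ by applying $\eps\otimes\id$ (or multiplication) to the second bullet, rather than trying to force a single computation to yield the full chain.
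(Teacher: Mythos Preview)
Your plan matches the paper's proof: the equivalence is obtained exactly by writing the first bullet as $\ww_{lr}(a\otimes b)=\ww_{rr}(a\otimes b)$ and computing $\ww_{lr}^{-1}\ww_{rr}(x\otimes y)=yx_{(1)}\otimes S(x_{(2)})$, which is precisely the second bullet. One small correction in the multiplicative part: to get $ab=\varepsilon(a)b$ you must apply $\varepsilon\otimes\id$, not $\id\otimes\varepsilon$, to $a_{(1)}\otimes a_{(2)}b=b_{(1)}a\otimes b_{(2)}$ (your choice gives $a\varepsilon(b)=ba$ instead); the paper then derives $\varepsilon(a)b=S(a)\varepsilon(b)$ by applying $m\circ(S\otimes\id)$ to the same identity, though your alternative of hitting the second bullet with $\varepsilon\otimes\id$ works equally well.
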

\begin{proof}Suppose that $\Delta(a)(\I\otimes b) = \Delta(b)(a\otimes \I)$ holds. 
Applying $(\varepsilon\otimes \id)$ to the identity \[a_{(1)}\otimes a_{(2)}b   = b_{(1)}a\otimes b_{(2)}\] we get $ab = \varepsilon(a)b$. Moreover we have \begin{align*}\varepsilon(a)b &= S(a_{(1)})a_{(2)}b \\&= S(a)S(b_{(1)})b_{(2)} = S(a)\varepsilon(b)\end{align*}
where the second equality was obtained by applying $m\circ (S\otimes\id)$ to the identity 
$a_{(1)}\otimes a_{(2)}b  =b_{(1)}a\otimes b_{(2)}$. 
 
In order to see the equivalence of the first and the second bullet point we can write the first as $\ww_{lr}^{-1}\ww_{rr}(a\otimes b)=a\otimes b$ and note that $\ww_{lr}^{-1}\ww_{rr}(x\otimes y) = yx_{(1)}\otimes S(x_{(2)})$. 
\end{proof}

\begin{corollary}\label{Corr1}
Let $P\in\mathcal{A}$ be a non-zero projection such that 
\begin{equation}\label{H1}\Delta(P)(\I\otimes P) = \Delta(P)(P\otimes \I)\end{equation}Then $\varepsilon(P) = \I$,  $S(P) = P$ and $(P\otimes\I)\Delta(P) =  \Delta(P)(P\otimes \I) = P\otimes P$. In particular \begin{itemize}
\item every projection $P$ satisfying \Cref{H1} is a two sided  group-like projection;
\item every two sided group-like projection $P$ satisfies $S(P)=P$.
\end{itemize}
\end{corollary}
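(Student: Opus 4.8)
The plan is to invoke \Cref{abprop} with $a=b=P$ and then spread the single identity it produces to all four group-like relations, using the symmetry furnished by $S(P)=P$. Note first that \Cref{H1} is precisely the first bullet of \Cref{abprop} in the case $a=b=P$, since $\Delta(a)(\I\otimes b)=\Delta(P)(\I\otimes P)$ and $\Delta(b)(a\otimes\I)=\Delta(P)(P\otimes\I)$. The concluding clause of that lemma therefore gives $P^2=\varepsilon(P)P=S(P)\varepsilon(P)$. As $P$ is a non-zero idempotent, the relation $P=\varepsilon(P)P$ forces $\varepsilon(P)=1$, whence $P=S(P)\varepsilon(P)=S(P)$; in particular $S^{-1}(P)=P$ as well.

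Next I would read off one of the four group-like identities from the equivalent second bullet of \Cref{abprop}, namely $(P\otimes\I)(\id\otimes S)(\Delta(P))=P\otimes P$, that is $PP_{(1)}\otimes S(P_{(2)})=P\otimes P$. Applying $\id\otimes S^{-1}$ and using $S^{-1}(P)=P$ turns this into $(P\otimes\I)\Delta(P)=P\otimes P$.

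The remaining identities come from antipode symmetry. Because $S(P)=P$, the standard relation $(S\otimes S)\Delta=\flip\circ\Delta\circ S$ specialises to $(S\otimes S)\Delta(P)=\flip(\Delta(P))$, and $S\otimes S$ is an anti-automorphism of $\mathcal{A}\otimes\mathcal{A}$ for the componentwise product. Applying $(S\otimes S)$ and then the flip $\flip$ to \Cref{H1} rewrites it as $(P\otimes\I)\Delta(P)=\Delta(P)(\I\otimes P)$, which together with the identity of the previous paragraph gives $\Delta(P)(\I\otimes P)=P\otimes P$; substituting this back into \Cref{H1} gives $\Delta(P)(P\otimes\I)=P\otimes P$, and one further application of $(S\otimes S)\flip$ to the latter yields $(\I\otimes P)\Delta(P)=P\otimes P$. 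The four identities together assert that $P$ is simultaneously a left and a right group-like projection, hence two sided, which is the displayed conclusion. For the final bulleted remark, any two sided group-like projection satisfies $\Delta(P)(\I\otimes P)=P\otimes P=\Delta(P)(P\otimes\I)$, i.e.\ \Cref{H1}, so the main part applies and gives $S(P)=P$.

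The only delicate point is the bookkeeping in the last step: one must keep track of which tensor leg carries the comultiplication and which the multiplication while pushing each identity through $(S\otimes S)\flip$. This is routine once the symmetry $(S\otimes S)\Delta(P)=\flip(\Delta(P))$ is recorded, but it is where side-errors are most likely to creep in.
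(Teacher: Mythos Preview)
Your overall approach matches the paper's: invoke \Cref{abprop} with $a=b=P$ to get $\varepsilon(P)=1$ and $S(P)=P$, extract $(P\otimes\I)\Delta(P)=P\otimes P$ from the second bullet, and then propagate to the remaining identities via the symmetry $\flip\circ(S\otimes S)$. However, there is precisely the kind of bookkeeping slip you flagged as the danger point.

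Applying $\flip\circ(S\otimes S)$ to \Cref{H1} does \emph{not} yield $(P\otimes\I)\Delta(P)=\Delta(P)(\I\otimes P)$. Since $S\otimes S$ reverses the product in $\cA\otimes\cA$ and $(S\otimes S)\Delta(P)=\Delta^{\mathrm{op}}(P)$, one computes
\[
\flip\bigl((S\otimes S)(\Delta(P)(\I\otimes P))\bigr)=\flip\bigl((\I\otimes P)\Delta^{\mathrm{op}}(P)\bigr)=(P\otimes\I)\Delta(P),
\]
and similarly the right-hand side of \Cref{H1} goes to $(\I\otimes P)\Delta(P)$. So the transformed equation is $(P\otimes\I)\Delta(P)=(\I\otimes P)\Delta(P)$, with the multiplication on the \emph{left} on both sides. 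Combined with your step~2 this gives $(\I\otimes P)\Delta(P)=P\otimes P$, not $\Delta(P)(\I\otimes P)=P\otimes P$; your subsequent substitution into \Cref{H1} and the ``one further application'' therefore rest on an unestablished identity.

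The fix is immediate and is what the paper does: apply $\flip\circ(S\otimes S)$ directly to $(P\otimes\I)\Delta(P)=P\otimes P$ itself. That gives $\Delta(P)(\I\otimes P)=P\otimes P$; then \Cref{H1} gives $\Delta(P)(P\otimes\I)=P\otimes P$, and a second application of $\flip\circ(S\otimes S)$ to this last equality (or the computation in the previous paragraph) yields $(\I\otimes P)\Delta(P)=P\otimes P$. With this correction your argument is complete and essentially identical to the paper's.
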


\begin{proof}
 Putting $a=b=P$ in \Cref{abprop} we get $P^2 = \varepsilon(P)P$ thus $\varepsilon(P) =1 $ and   $S(P) = \varepsilon(P)S(P) = P$. This together with  $(P\otimes\I)(\id\otimes S)(\Delta(P)) = P\otimes P$ (c.f. the second bullet point of \Cref{abprop}) yields $(P\otimes\I)\Delta(P) = P\otimes P$ . Applying $S\otimes S$ to the last identity we get $P\otimes P = \Delta(P)(\I\otimes P) = \Delta(P)(P\otimes\I)$. 
\end{proof}
 It turns out the condition $S^2(P) = P$ implies $S(P) = P$ if $P$ is a right group-like projection. More generally  the following  holds. 
\begin{lemma}\label{lems2}
Let $P\in\mathcal{A}$ be a non-zero projection such that \begin{equation}\label{H2}\Delta(P)(\I\otimes P) =P\otimes P\end{equation}
If $S^2(P) = P$ then $S(P) = P$.  
\end{lemma}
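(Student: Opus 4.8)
The plan is to extract two algebraic identities from the hypothesis $\Delta(P)(\I\otimes P)=P\otimes P$ and then play them off against the assumption $S^2(P)=P$. Writing the hypothesis in Sweedler notation as $P_{(1)}\otimes P_{(2)}P = P\otimes P$, I would first record the scalar normalization $\varepsilon(P)=1$, and then a one-sided absorption identity $S(P)P=P$; the conclusion $S(P)=P$ should then drop out by feeding the latter through the antipode.

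First I would apply $\varepsilon\otimes\id$ to the hypothesis. On the left the counit axiom collapses $\varepsilon(P_{(1)})P_{(2)}P$ to $P^2=P$, using that $P$ is idempotent, while on the right it gives $\varepsilon(P)P$. Hence $(1-\varepsilon(P))P=0$, and since $P\neq 0$ I obtain $\varepsilon(P)=1$.

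Next I would apply $m\circ(S\otimes\id)$ to the hypothesis. On the left the antipode axiom $S(P_{(1)})P_{(2)}=\varepsilon(P)\I$ turns $S(P_{(1)})P_{(2)}P$ into $\varepsilon(P)P=P$, while on the right it produces $S(P)P$. This yields the key identity $S(P)P=P$. Now I would apply $S$ to this identity: since $S$ is an algebra anti-homomorphism, $S(S(P)P)=S(P)S^2(P)$, which equals $S(P)P$ once I invoke $S^2(P)=P$; the right-hand side becomes $S(P)$. Thus $S(P)P=S(P)$, and comparing with $S(P)P=P$ gives $S(P)=P$.

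The computation is short and essentially forced, so I do not expect a serious obstacle; the one point requiring care is that only the single relation $\Delta(P)(\I\otimes P)=P\otimes P$ is available, not the companion $(\I\otimes P)\Delta(P)=P\otimes P$, so every step must be derived from that one equation alone. The genuinely load-bearing idea is to push the derived identity $S(P)P=P$ through the anti-homomorphism $S$ and use $S^2(P)=P$ to interchange the two factors, converting the same absorption relation into $S(P)P=S(P)$.
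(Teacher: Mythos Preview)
Your proof is correct and essentially identical to the paper's: both extract $\varepsilon(P)=1$ via $\varepsilon\otimes\id$, then $S(P)P=P$ via $m\circ(S\otimes\id)$, and finish by applying $S$ together with $S^2(P)=P$ to obtain $S(P)=S(P)S^2(P)=S(P)P=P$.
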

\begin{proof}
Applying $(\varepsilon\otimes\id)$ to \eqref{H2} we get $\varepsilon(P) = 1$. Next, applying $m\circ(S\otimes\id)$ to \eqref{H2} we get $P = S(P)P$. Thus $S(P) = S(P)S^2(P) = S(P)P = P$. 
\end{proof}
Let us give one more condition guaranteeing a right group-like projection to be preserved by $S$.
\begin{lemma}
Let $P\in\mathcal{A}$ be a right group-like projection.
Then the following equivalence holds:
\[\Delta(P)(\I\otimes P) = \Delta(P)(P\otimes \I) \Leftrightarrow S(P) = P\] 
\end{lemma}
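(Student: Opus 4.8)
The plan is to treat the two implications separately, drawing on \Cref{Corr1} for the forward direction and on an antipode-symmetry argument for the converse.

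For the forward implication, I observe that the hypothesis $\Delta(P)(\I\otimes P) = \Delta(P)(P\otimes\I)$ is \emph{verbatim} condition \eqref{H1} of \Cref{Corr1}. Since a right group-like projection is in particular a non-zero projection, \Cref{Corr1} applies directly and already delivers $S(P)=P$ (indeed it gives the stronger statement that $P$ is two sided). So this direction requires no new computation; notably it does not even use the right group-like hypothesis.

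For the converse I assume $S(P)=P$ and exploit the anticomultiplicativity of the antipode, $\Delta\comp S = \flip\comp(S\otimes S)\comp\Delta$, that is $\Delta(S(x)) = S(x_{(2)})\otimes S(x_{(1)})$. The key move is to apply $S\otimes S$ and then the flip $\flip$ to the right group-like relation $(\I\otimes P)\Delta(P)=P\otimes P$. Writing this relation as $P_{(1)}\otimes PP_{(2)} = P\otimes P$, applying $S\otimes S$ and using that $S$ reverses products gives $S(P_{(1)})\otimes S(P_{(2)})S(P) = S(P)\otimes S(P)$; applying $\flip$ then rewrites the left-hand side as $S(P_{(2)})S(P)\otimes S(P_{(1)}) = \Delta(S(P))(S(P)\otimes\I)$. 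Hence $\Delta(S(P))(S(P)\otimes\I) = S(P)\otimes S(P)$, and substituting $S(P)=P$ yields $\Delta(P)(P\otimes\I)=P\otimes P$. Since $P$ is right group-like we also have $\Delta(P)(\I\otimes P)=P\otimes P$, and comparing the two identities gives $\Delta(P)(\I\otimes P)=\Delta(P)(P\otimes\I)$, as desired.

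The computation is entirely routine once the correct relation is selected, so the only point of care—and what I would flag as the sole (minor) obstacle—is bookkeeping the left/right placement of $P$ under the antipode: one must dualise $(\I\otimes P)\Delta(P)=P\otimes P$ rather than $\Delta(P)(\I\otimes P)=P\otimes P$. Dualising the latter would instead produce $(P\otimes\I)\Delta(P)=P\otimes P$, which is not the identity we are after; choosing the former is exactly what transports the relation to a condition of the form $\Delta(P)(P\otimes\I)=\cdots$.
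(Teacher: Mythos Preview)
Your proof is correct and essentially the same as the paper's: both directions match, with the forward implication quoting \Cref{Corr1} and the converse obtained by applying $S\otimes S$ (and a flip) to the right group-like identity $(\I\otimes P)\Delta(P)=P\otimes P$ and then substituting $S(P)=P$. The only cosmetic difference is that the paper phrases the intermediate step as $\Delta^{\mathrm{op}}(P)(\I\otimes P)=P\otimes P$ before flipping, whereas you flip first and write it directly as $\Delta(S(P))(S(P)\otimes\I)=S(P)\otimes S(P)$; these are the same computation.
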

\begin{proof}
Clearly if $\Delta(P)(\I\otimes P) = \Delta(P)(P\otimes \I)$ then $S(P) = P$ (see \Cref{Corr1}). Conversely, if $P$  is a right group-like projection such that $S(P) = P$ then \[P\otimes P =(S\otimes S)((\I\otimes P)\Delta(P)) = \Delta^{\textrm{op}}(P)(\I\otimes P) \] and we see that $\Delta^{\textrm{op}}(P)(\I\otimes P) = \Delta(P)(\I\otimes P)$. Applying the flip $\sigma$, where $\sigma(a\otimes b) = b\otimes a$, to both sides and using $\sigma(\Delta(P)(\I\otimes P)) = \Delta(P)(\I\otimes P)$ we get $\Delta(P)(\I\otimes P) = \Delta(P)(P\otimes \I)$.
\end{proof}

Note that if $P$ is a right group-like projection then  $P = S(P)P$ and $S(P) = PS(P)$ (see the proof of \Cref{lems2}). This observation together with  other results of this subsection yields:
\begin{theorem}\label{th.summary}
Let $P\in\mathcal{A}$  be a non-zero projection. The following conditions are equivalent:
\begin{itemize}
    \item $P$ is a right group-like projection such that $S(P) = P$;
    \item $P$ is a right group-like projection such that $S^2(P) = P$;
    \item $P$ is a right group-like projection such that $PS(P) = S(P)P$;
    \item $P$ is a left group-like projection such that $S(P) = P$;
    \item $P$ is a left group-like projection such that $S^2(P) = P$;
    \item $P$ is a left  group-like projection such that $PS(P) = S(P)P$;
    \item $P$ is a two sided group-like projection;
    \item $P$ satisfies $\Delta(P)(\I\otimes P) = \Delta(P)(P\otimes\I)$.
\end{itemize}
\end{theorem}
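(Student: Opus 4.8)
The plan is to split the eight conditions into a \emph{right} cluster, consisting of (1), (2), (3) together with the symmetric conditions (7) and (8), and a mirror-image \emph{left} cluster (4), (5), (6), and then to join the two clusters through the manifestly left--right symmetric condition (7). Every tool needed for the right cluster has already been assembled in this subsection, the left cluster is handled by the exact mirror of those arguments, and the only genuinely new input is the bridge, which I expect to be the delicate point.

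For the right cluster I would first dispose of the easy implications among (1)--(3). If $P$ is right group-like with $S(P)=P$, then $S^2(P)=P$ and $PS(P)=P=S(P)P$, giving (1)$\Rightarrow$(2) and (1)$\Rightarrow$(3). The converse (2)$\Rightarrow$(1) is \Cref{lems2}, since a right group-like projection satisfies the hypothesis \eqref{H2}. For (3)$\Rightarrow$(1) I would feed the identities $P=S(P)P$ and $S(P)=PS(P)$ (recorded just before the statement) into the hypothesis $PS(P)=S(P)P$, obtaining $S(P)=PS(P)=S(P)P=P$. To close the cluster I would invoke \Cref{Corr1} and the preceding (unlabelled) lemma: \Cref{Corr1} shows that \eqref{H1}, which is exactly condition (8), forces $P$ to be two sided with $S(P)=P$, so (8)$\Rightarrow$(7) and (8)$\Rightarrow$(1); conversely a two sided projection trivially satisfies (8), so (7)$\Rightarrow$(8), and the preceding lemma gives (1)$\Rightarrow$(8) for right group-like $P$; finally (7)$\Rightarrow$(1) once more by $S(P)=P$ from \Cref{Corr1}. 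This yields (1)$\iff$(2)$\iff$(3)$\iff$(7)$\iff$(8).

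For the left cluster I would transcribe the same reasoning under the interchange of the two tensor slots. The mirror of the recorded identities reads $P=PS(P)$ and $S(P)=S(P)P$ for a left group-like $P$, obtained by applying $m\circ(\id\otimes S)$ and $m\circ(S\otimes\id)$ to the two defining equations $(P\otimes\I)\Delta(P)=P\otimes P=\Delta(P)(P\otimes\I)$; feeding $PS(P)=S(P)P$ into them gives (6)$\Rightarrow$(4), and the reverse is again trivial. The mirror of \Cref{lems2} (whose proof copies verbatim: from $P=PS(P)$ and $S^2(P)=P$ one reads off $S(P)=S^2(P)S(P)=PS(P)=P$) supplies (5)$\Rightarrow$(4), with (4)$\Rightarrow$(5) immediate. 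Hence (4)$\iff$(5)$\iff$(6).

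The bridge between the clusters is the step I expect to be the main obstacle to phrase cleanly, and the cleanest route is the antipode duality. Applying $S\otimes S$, which is an anti-homomorphism of $\cA\otimes\cA$ intertwining $\Delta$ with $\Delta^{\op}\comp S$, and then the flip $\sigma$, one checks that the pair of right group-like equations for $P$ is carried onto the pair of left group-like equations for $S(P)$; thus $P$ is right group-like if and only if $S(P)$ is left group-like, and symmetrically with the roles of left and right exchanged. Granting this, (4)$\Rightarrow$(7) is immediate: if $P$ is left group-like with $S(P)=P$, then $S(P)=P$ is right group-like, so $P$ is simultaneously left and right group-like, i.e.\ two sided; and (7)$\Rightarrow$(4) is trivial, a two sided projection being left group-like with $S(P)=P$ by \Cref{Corr1}. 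This links (4)$\iff$(7) to the right cluster and closes all eight equivalences. The one thing to watch in the duality computation is the bookkeeping---that $S\otimes S$ reverses the order of multiplication on $\cA\otimes\cA$ while $\sigma$ preserves it---but this is a short check against the formulas in \eqref{wdef}.
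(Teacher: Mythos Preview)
Your proposal is correct and follows essentially the same approach as the paper. The paper's own ``proof'' is in fact a single sentence before the theorem statement, recording the identities $P=S(P)P$ and $S(P)=PS(P)$ for right group-like $P$ and then declaring that ``this observation together with other results of this subsection yields'' the theorem; you have simply made explicit the assembly of \Cref{Corr1}, \Cref{lems2}, the unlabelled lemma, and the left--right mirror arguments that the paper leaves to the reader, including the antipode duality $P$ right group-like $\iff$ $S(P)$ left group-like used for the bridge.
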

Using \Cref{th.summary} we can characterize two sided group-like projections it terms of their shifts, c.f. the first part  of \Cref{lrshift}.   The second part of this proposition corresponds to the classically trivial observation, that a right shift  $\I_{Hg}$  of of (two sided) group-like projection $\I_{H}$  is a left shift of (two sided)  group-like projection $\I_{g^{-1}Hg}$. Except generalizing this   fact,   \Cref{lrshift} will also be essential in the proof of \Cref{square}. 
\begin{proposition}\label{lrshift}
Let $P$ be a right group like projection and let $Q$ be a right shift of $P$ in the sense of \Cref{shifts_def}. Then $P$ is a two sided group-like projection if and only if $S^2(Q) = Q$. In the latter  case there exists a two sided group-like projection $\tilde{P}$ such that $Q$ is a left shift of $\tilde{P}$. \end{proposition}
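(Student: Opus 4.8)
The plan is to prove the two assertions separately: the equivalence follows immediately from results already at hand, while the existence of $\tilde{P}$ is where the actual work lies.

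For the equivalence, since $Q$ is a right shift of the right group-like projection $P$, \Cref{lemma_shifts} gives that $S^2(Q)=Q$ if and only if $S^2(P)=P$. On the other hand, by \Cref{th.summary} a right group-like projection $P$ satisfies $S^2(P)=P$ precisely when it is two sided. Composing these two equivalences yields that $P$ is two sided if and only if $S^2(Q)=Q$, with no computation required.

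For the existence of $\tilde{P}$, assume $P$ is two sided, so $S(P)=P$ and $S^2(Q)=Q$. The goal is to produce $\tilde{P}$ with
\[
\Delta(Q)(Q\otimes\I)=Q\otimes\tilde{P}=(Q\otimes\I)\Delta(Q),
\]
that is $\ww_{rl}(Q\otimes Q)=Q\otimes\tilde{P}=\ww_{rr}(Q\otimes Q)$, which is exactly the assertion that $Q$ is a left shift of $\tilde{P}$ in the sense of \Cref{shifts_def}. Once this factorisation is secured, the left version of \Cref{basic_lem} recorded in \Cref{remls} shows that $\tilde{P}$ is automatically a left group-like projection, and its ``moreover'' clause converts $S^2(Q)=Q$ into $S^2(\tilde{P})=\tilde{P}$; \Cref{th.summary} then upgrades $\tilde{P}$ from left group-like to two sided. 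Thus the entire second statement reduces to the displayed factorisation.

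The main obstacle is precisely this factorisation: I must show that $\Delta(Q)(Q\otimes\I)$ and $(Q\otimes\I)\Delta(Q)$ agree and that their common value splits off $Q$ on the first leg. A cautionary preliminary observation is that applying $S\otimes S$ to the defining relations of the right shift and using $S(P)=P$ cleanly produces the fact that $S(Q)$ is a \emph{left} shift of $P$; but the operations $S\otimes S$, $S^{-1}\otimes S^{-1}$ and the flip $\flip$ merely permute the two given identities $(\I\otimes Q)\Delta(Q)=P\otimes Q=\Delta(Q)(\I\otimes Q)$ among themselves, so the desired factorisation is genuinely new data and cannot be read off from them (classically $\tilde{P}$ is the conjugate $\I_{g^{-1}Hg}$ of $P=\I_H$, not $P$ itself).

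To break this, I would exploit the functional description $Q=(\id\otimes\mu)(\Delta(P))$ from \Cref{lemma_shifts} together with the extra relations $(P\otimes\I)\Delta(P)=P\otimes P=\Delta(P)(P\otimes\I)$ available because $P$ is two sided. Writing $\Delta(Q)=\sum\mu(P_{(3)})\,P_{(1)}\otimes P_{(2)}$ and expanding $(Q\otimes\I)\Delta(Q)$, the plan is to push the left multiplication by $Q$ through the coproduct of $P$ using coassociativity and the two sided relations, forcing the first tensorand to collapse onto $Q$ and reading off $\tilde{P}$ as the residual second leg, with idempotency $Q^2=Q$ used to remove the redundant copy of $P$. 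Should the bare manipulation prove unwieldy, an alternative is to pass to the left coideal subalgebra $\cN_P$ attached to the two sided $P$ and its associated faithfully flat Hopf quotient, where $Q$ represents a group-like coset and $\tilde{P}$ is visibly its conjugate, so that the factorisation becomes a statement about group-like elements in the quotient. In either approach, once the factorisation is in place the conclusion via \Cref{remls} and \Cref{th.summary} is routine.
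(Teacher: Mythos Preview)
Your treatment of the equivalence and your reduction of the second claim to the factorisation
\[
\Delta(Q)(Q\otimes\I)=Q\otimes\tilde{P}=(Q\otimes\I)\Delta(Q)
\]
are both correct and match the paper exactly, including the endgame via \Cref{remls} and \Cref{th.summary}.

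The gap is that you never actually establish the factorisation; you only announce two plans. The first plan, via the bare functional description $Q=(\id\otimes\mu)\Delta(P)$, is unlikely to close: expanding $(Q\otimes\I)\Delta(Q)$ this way produces $\mu(P'_{(2)})\mu(P_{(3)})\,P'_{(1)}P_{(1)}\otimes P_{(2)}$ with an uncontrolled functional $\mu$ and no evident mechanism to collapse the first leg to $Q$ using only the two-sided relations for $P$. The second plan, passing to a ``faithfully flat Hopf quotient'' where $Q$ becomes a group-like coset, is not available in general: $\cN_P$ need not be normal, so $\cA/\cA\cN_P^-$ is only a module quotient coalgebra, not a Hopf algebra, and the conjugation picture does not literally make sense there.

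The paper's route uses a different part of \Cref{lemma_shifts}: not the existence of $\mu$, but the explicit antipode identities in its second bullet, namely $\Delta(P)(\I\otimes S(Q))=Q\otimes S(Q)$ and $(\I\otimes S^{-1}(Q))\Delta(P)=Q\otimes S^{-1}(Q)$. Applying $S\otimes S$ (using $S(P)=P$ and $S^2(Q)=Q$) converts these into
\[
Q\otimes Q \;=\; QP_{(1)}\otimes S(P_{(2)}) \;=\; P_{(1)}Q\otimes S(P_{(2)}),
\]
from which one reads off that all of $QS(P_{(2)})$, $S(P_{(2)})Q$, $QP_{(1)}$, $P_{(1)}Q$ lie in $\kk Q$. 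Hitting the displayed identity with $\ww_{rr}$ then gives $Q_{(1)}Q\otimes Q_{(2)}=S(P_{(3)})QP_{(1)}\otimes P_{(2)}$, whose first leg is now visibly in $\kk Q$, yielding $(Q\otimes\I)\Delta(Q)=Q\otimes y$; the other order and the sandwiching $y=\tilde y$ are analogous. This is the missing idea you need.
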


\begin{proof}[Proof of \Cref{lrshift}]
Note that the first part of the theorem follows immediately from \Cref{th.summary} and \Cref{lemma_shifts} so let us move on to the proof of the second part. 
 Using the identities entering the second bullet point of \Cref{lemma_shifts} we can check that \begin{equation}\label{con1}QS(P_{(2)}),S(P_{(2)})Q\in\kk Q.\end{equation} 
Applying $S\otimes S$ to these identities  and using $S(P) = P$ and $S^2(Q) = Q$  we get 
\begin{equation}\label{idenlocal}Q\otimes Q = QP_{(1)}\otimes S(P_{(2)})= P_{(1)}Q\otimes S(P_{(2)}) \end{equation} which implies that  \begin{equation}\label{con2}QP_{(1)}\in\kk Q,P_{(1)}Q\in\kk Q.\end{equation} Applying $\ww_{rr}$ to the first equality in \Cref{idenlocal} we get   $Q_{(1)}Q\otimes Q_{(2)} = S(P_{(3)})QP_{(1)}\otimes P_{(2)}$. Using \Cref{con1} and \Cref{con2} we conclude that there exists $y$ such that   \[\Delta(Q)(Q\otimes\I) = Q\otimes y.\] Similarly using the second equality of \Cref{idenlocal} we get the existence of $\tilde y$ such that  \[(Q\otimes\I)\Delta(Q) = Q\otimes \tilde{y}.\] Noting that $(Q\otimes\I)\Delta(Q)(Q\otimes\I) = Q\otimes y = Q\otimes\tilde{y} $ we get $y=\tilde{y}$. 
 Using \Cref{remls} we see that $y$ is a two sided  group-like projection. 
\end{proof}
\subsection{Left coideal subalgebras with integrals and right group-like projections}\label{Sec_left_coideal}
\begin{definition}\label{def_coids}
Let $P\in\mathcal{A}$ be a right group-like projection. Consider 
\begin{align}
  \cN^l_P&=\{x\in \cA: (\I\otimes P)\Delta(x) = x\otimes P\} \label{eq:nlp}\\
  \cN^r_P&=\{x\in \cA: \Delta(x) (\I\otimes P) = x\otimes P\}\label{eq:nrp}\\
  \cN_P &= \{x\in\mathcal{A}:\Delta(x)(\I\otimes P) = (\I\otimes P)\Delta(x) = x\otimes P\} \label{eq:np}
\end{align}
Note that these are all left coideal subalgebras and $P\in\cN_P=\cN^l_P\cap \cN^r_P$. When we wish to refer to all three left  coideal subalgebras simultaneously we write $\cN_P^{-lr}$ (meaning the superscript can be blank, `$l$' or `$r$'); similarly, `$\cN_P^{lr}$' means `$\cN_P^l$ or $\cN^r_P$'.  

The smallest left coideal in $\cA$ containing $P$ will be denoted $\cV_P$ 
\begin{equation}
    \label{eq:n1}
    \cV_P = \{(\mu\otimes \id)(\Delta(P)):\mu\in\mathcal{A}^*\}.
  \end{equation}
\end{definition} 
With this description, $\cV_P$ does not immediately  seem to form an algebra;  this issue will be discussed later.

Our first observation concerning $\cN_P$ is as follows. 

\begin{proposition}\label{pr.min-cntr}
  Let $P\in \mathcal{A}$ be a right group-like projection and $\cN_P\subset \mathcal{A}$ the associated left coideal subalgebra, as in \Cref{eq:np}.

  Then, $P$ is a minimal central projection in $\cN_P$ and $\ker\left(\varepsilon|_{\cN_P}\right) = (1-P)\cN_P$. In particular if $\cN_{P_1} = \cN_{P_2}$ then $P_1 = P_2$.
\end{proposition}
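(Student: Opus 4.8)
The plan is to reduce the whole statement to the two multiplicative relations
\[
Px=\varepsilon(x)P=xP\qquad(x\in\cN_P),
\]
which fall out of the defining conditions in \Cref{eq:np} by a single application of the counit. Indeed, for $x\in\cN_P$ the identity $\Delta(x)(\I\otimes P)=x\otimes P$ reads $x_{(1)}\otimes x_{(2)}P=x\otimes P$, and applying $\varepsilon\otimes\id$ gives $xP=\varepsilon(x)P$; symmetrically $(\I\otimes P)\Delta(x)=x\otimes P$ yields $Px=\varepsilon(x)P$. Recalling that $P\in\cN_P$, that $P^2=P$, and that $\varepsilon(P)=1$ (the latter obtained by feeding the right group-like identity for $P$ into $\varepsilon\otimes\id$, as in the proof of \Cref{lems2}), these relations already exhibit $P$ as a central idempotent of $\cN_P$.

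For minimality I would observe that the relation $Px=\varepsilon(x)P$ says exactly that the corner $P\cN_P$ equals $\kk P$: it is contained in $\kk P$, and it contains $P=P\cdot\I$. Since $P$ is central, any central idempotent $Q\in\cN_P$ with $PQ=Q$ satisfies $Q=PQ\in P\cN_P=\kk P$, so $Q=\lambda P$; then $Q^2=Q$ forces $\lambda^2=\lambda$, hence $Q\in\{0,P\}$. Thus $P$ admits no nontrivial central subidempotent, i.e.\ it is a minimal central projection.

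The kernel identity is then purely formal. Using $Px=\varepsilon(x)P$ we may write, for every $x\in\cN_P$,
\[
(1-P)x=x-\varepsilon(x)P,
\]
and since $\varepsilon(1-P)=1-\varepsilon(P)=0$ this element lies in $\ker(\varepsilon|_{\cN_P})$; conversely, if $\varepsilon(x)=0$ then $x=x-\varepsilon(x)P=(1-P)x\in(1-P)\cN_P$. The two inclusions give $\ker(\varepsilon|_{\cN_P})=(1-P)\cN_P$.

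Finally, for the separation statement suppose $\cN_{P_1}=\cN_{P_2}=:\cN$. Then $P_1\in\cN=\cN_{P_2}$, so the central relation for $\cN_{P_2}$ applied to $P_1$ gives $P_2P_1=\varepsilon(P_1)P_2=P_2$; symmetrically $P_2\in\cN=\cN_{P_1}$ gives $P_2P_1=\varepsilon(P_2)P_1=P_1$, and comparing the two values of $P_2P_1$ yields $P_1=P_2$. I do not anticipate a genuine obstacle: the argument is essentially a chain of counit applications to the defining relations. The only point deserving a little care is that, at this stage, $\cN_P$ is not yet known to be finite dimensional (cf.\ \Cref{cor.fd}), so minimality must be witnessed directly by the one-dimensionality of the corner $P\cN_P$ rather than by any decomposition into primitive idempotents.
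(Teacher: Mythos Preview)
Your proof is correct and follows essentially the same approach as the paper's: both apply $\varepsilon\otimes\id$ to the defining relations of $\cN_P$ to obtain $xP=\varepsilon(x)P=Px$, then read off centrality, minimality (via the one-dimensionality of $P\cN_P$), the kernel description, and the uniqueness of $P$ from these identities. Your write-up is somewhat more explicit---spelling out the subidempotent argument for minimality and both inclusions for the kernel---but there is no substantive difference in strategy.
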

\begin{proof}
  Applying $\varepsilon\otimes\id$ to the two equalities
  \begin{equation*}
    \Delta(x)(\I\otimes P) = x\otimes P = (\I\otimes P)\Delta(x),\ x\in \cN_P
  \end{equation*}
  shows that for $x\in \cN_P$ we have $xP = \varepsilon(x)P = Px$. In particular 
  \begin{equation*}
    xP=0\iff \varepsilon(x)=0 \iff Px=0. 
  \end{equation*}
  This implies both the centrality claim and the description of $\ker\left(\varepsilon|_{\cN_P}\right)$, while the minimality follows from this: indeed, we now know that $P\cN_P$ is one-dimensional. Assuming that $\tilde{P}$ is a group-like projection such that $\cN_P = \cN_{\tilde{P}}$ we get $P=\varepsilon(P)P = P\tilde{P} = \varepsilon(\tilde P)\tilde P = \tilde P$.
\end{proof}
Using \cite[Theorem 1]{skr-fin}  and \Cref{pr.min-cntr} we get  
\begin{corollary}
    The number of right group like projections in a finite dimensional semisimple or cosemisimple Hopf algebra is finite.
  
\end{corollary}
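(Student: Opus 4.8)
The plan is to deduce the finiteness statement directly from two inputs: the injectivity of the assignment $P\mapsto\cN_P$ and the cited finiteness of coideal subalgebras. First I would observe that \Cref{pr.min-cntr} already supplies exactly the injectivity we need. Indeed, that proposition shows that $P$ is recovered from $\cN_P$ as its (unique) minimal central projection realizing the counit, and explicitly records that $\cN_{P_1}=\cN_{P_2}$ forces $P_1=P_2$. Consequently the map sending a right group-like projection $P\in\cA$ to the left coideal subalgebra $\cN_P\subset\cA$ of \Cref{eq:np} is injective.

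Second, I would invoke \cite[Theorem 1]{skr-fin}, which guarantees that a finite-dimensional semisimple or cosemisimple Hopf algebra has only finitely many left coideal subalgebras. Composing the two facts, the set of right group-like projections injects, via $P\mapsto\cN_P$, into the finite set of left coideal subalgebras of $\cA$, and is therefore itself finite. This is the entire argument: the corollary is a one-line consequence once the injectivity of \Cref{pr.min-cntr} is in hand.

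The only point requiring any care is a bookkeeping matter of handedness and of the hypotheses under which \cite[Theorem 1]{skr-fin} applies. One must confirm that the objects $\cN_P$ are genuinely left coideal subalgebras (this is recorded right after \Cref{eq:np} in \Cref{def_coids}) and that Skryabin's theorem is quoted for the matching side; if it is phrased for right coideal subalgebras, one passes between the two via the bijective antipode $S$, which is harmless here since semisimplicity or cosemisimplicity is preserved and $S$ induces a bijection between left and right coideal subalgebras. There is no genuine analytic or combinatorial obstacle in the corollary itself: all of the substantive content lies in the black box \cite[Theorem 1]{skr-fin}, and the new observation being exploited is simply that right group-like projections embed into coideal subalgebras through \Cref{pr.min-cntr}.
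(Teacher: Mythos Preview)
Your proposal is correct and follows exactly the paper's approach: the corollary is stated immediately after \Cref{pr.min-cntr} with the one-line justification ``Using \cite[Theorem 1]{skr-fin} and \Cref{pr.min-cntr} we get'', which is precisely the injectivity-plus-finiteness argument you spell out. Your additional remarks on handedness are sound but not needed here.
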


According to \Cref{pr.min-cntr} a right group-like projection $P$ has the following property:
\begin{equation}\label{eq:int1}
  xP = \varepsilon(x) P= Px,\ \forall x\in \cN_P. 
\end{equation}
This is the special case of the condition defining two sided integrals for left or right  coideals subalgebras as defined in \cite[Section 2]{Kopp}.

\begin{definition}\label{def.integral}
A {\it left integral} for a left coideal subalgebra $\cN\subset \mathcal{A}$ is an element  $\Lambda\in \cN$  satisfying $x\Lambda = \varepsilon(x)\Lambda$  
for all $x\in\cN$.
A {\it right integral} for a left coideal subalgebra $\cN\subset \mathcal{A}$ is an element  $\Lambda\in \cN$  satisfying $\Lambda x = \varepsilon(x)\Lambda$ for all $x\in\cN$.  
Finally, a   {\it two sided integral} is an element that is both a left and a right integral. 
We say that a left, right or two sided integral $\Lambda$ is counital if $\varepsilon(\Lambda) =1$.
\end{definition}
\begin{proposition}
  Let  $\cN\subset \cA$ be a left coideal subalgebra and $P$ be a group-like projection which is a counital two sided  integral for $\cN$. Then $\cN\subset \cN_P$. In other words $\cN_P$ is a maximal left coideal subalgebra for which $P$ is a counital two sided integral.

  Analogous statements hold for $\cN^l_P$ ($\cN^r_P$) and right (respectively left) integrals. 
\end{proposition}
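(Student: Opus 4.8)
The plan is to verify the two defining membership conditions of $\cN_P$ in \eqref{eq:np} directly, using only that $\cN$ is a left coideal together with the integral identities for $P$, and then to read off maximality from the earlier results.

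First I would fix an arbitrary $x\in\cN$. Since $\cN$ is a left coideal, $\Delta(x)\in\cA\otimes\cN$, so writing $\Delta(x) = x_{(1)}\otimes x_{(2)}$ in Sweedler notation we may take $x_{(2)}\in\cN$. The whole argument then rests on the single computation
\[
\Delta(x)(\I\otimes P) = x_{(1)}\otimes x_{(2)}P = x_{(1)}\otimes\varepsilon(x_{(2)})P = x_{(1)}\varepsilon(x_{(2)})\otimes P = x\otimes P,
\]
whose second equality is the left integral identity $x_{(2)}P = \varepsilon(x_{(2)})P$ (applicable termwise because $x_{(2)}\in\cN$) and whose final equality is the counit axiom. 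Symmetrically, invoking the right integral identity $Px_{(2)} = \varepsilon(x_{(2)})P$ gives $(\I\otimes P)\Delta(x) = x\otimes P$. The two identities together place $x$ in $\cN_P$, so $\cN\subset\cN_P$.

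For maximality I would observe that $\cN_P$ is itself a left coideal subalgebra (recorded in \Cref{def_coids}), that $\varepsilon(P)=1$ and $xP = \varepsilon(x)P = Px$ for all $x\in\cN_P$ by \Cref{pr.min-cntr} and \eqref{eq:int1}, so $P$ is already a counital two sided integral for $\cN_P$; combined with the containment just proved, this makes $\cN_P$ maximal among left coideal subalgebras admitting $P$ as a counital two sided integral. The analogous statements require no new work: the left integral identity by itself yields $\Delta(x)(\I\otimes P) = x\otimes P$, hence $\cN\subset\cN_P^r$, while the right integral identity by itself yields $(\I\otimes P)\Delta(x) = x\otimes P$, hence $\cN\subset\cN_P^l$.

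There is no serious obstacle here; the computation is essentially forced. The only points demanding care are bookkeeping ones---using the left coideal property to land $x_{(2)}$ in $\cN$ before any integral identity is applied, and keeping straight that the left integral identity feeds $\cN_P^r$ while the right integral identity feeds $\cN_P^l$, matching the \emph{respectively} in the statement.
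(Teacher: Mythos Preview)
Your proof is correct and follows exactly the paper's approach: use the left coideal property $\Delta(x)\in\cA\otimes\cN$ together with the integral identities to obtain $\Delta(x)(\I\otimes P)=x\otimes P=(\I\otimes P)\Delta(x)$. The paper compresses this to a single sentence, while you spell out the Sweedler computation and the maximality and one-sided variants explicitly, but there is no substantive difference.
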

\begin{proof}
  We only prove the two-sided statement; the rest is entirely parallel.

  It is enough to note that for all $x\in\cN$ we have $\Delta(x)\in\cA\otimes\cN$ and thus  
  $\Delta(x)(\I\otimes P) = x\otimes P =(\I\otimes P)\Delta(x)$. 
\end{proof}

The theory of left coideal subalgebras with integrals developed in \cite{Kopp} assumes the corresponding coideals are finite dimensional. Our first aim is to show that this condition holds automatically for $\cN_P$. Actually as was subsequently  proved in \cite{Kasp_coint} a left coideal subalgebra admitting a (generalized) non-zero integral is automatically finite dimensional. 
Let us first prove a technical result. 

\begin{lemma}\label{ideal_prop}Let $x,y\in\cA$.
The following conditions are equivalent: 
\begin{itemize}
    \item $\Delta(x)(\I\otimes y) = x\otimes y$;
    \item $(\I\otimes x)\Delta(y) = (S(x)\otimes\I)\Delta(y)$.
\end{itemize}

Similarly the following conditions are equivalent:
\begin{itemize}
    \item $(\I\otimes y)\Delta(x) = x\otimes y$;
    \item $\Delta(y)(\I\otimes x) = \Delta(y)(S^{-1}(x)\otimes\I)$.
\end{itemize}
\end{lemma}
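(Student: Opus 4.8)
The plan is to continue in the spirit of the operators $\ww_{-,-}$ from \eqref{wdef}: I will produce a single invertible linear operator on $\cA\otimes\cA$ that carries the two sides of the first displayed identity onto the two sides of the second, so that the asserted equivalence becomes a formal consequence of bijectivity. The first bullet reads, in Sweedler notation, $x_{(1)}\otimes x_{(2)}y = x\otimes y$, while the second reads $y_{(1)}\otimes xy_{(2)} = S(x)y_{(1)}\otimes y_{(2)}$; these look structurally unrelated only because the coproduct falls on $x$ in one and on $y$ in the other, and the operator below is exactly what reconciles them.

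Concretely, I would define $\Phi\colon\cA\otimes\cA\to\cA\otimes\cA$ by $\Phi(u\otimes v)=S(u)v_{(1)}\otimes v_{(2)}$. This is precisely $\ww_{rl}\circ(S\otimes\id)$, so since $S$ is invertible and $\ww_{rl}$ is invertible with inverse recorded in \eqref{wdefinv}, $\Phi$ is a linear bijection and a short computation gives $\Phi^{-1}(x\otimes y)=y_{(1)}S^{-1}(x)\otimes y_{(2)}$. The crux is then two evaluations resting on the threefold-coproduct antipode identity $S(x_{(1)})x_{(2)}\otimes x_{(3)}=\I\otimes x$: first, $\Phi(x\otimes y)=S(x)y_{(1)}\otimes y_{(2)}$ is visibly the right-hand side of the second bullet; second, applying $\Phi$ to $x_{(1)}\otimes x_{(2)}y$ yields $S(x_{(1)})x_{(2)}y_{(1)}\otimes x_{(3)}y_{(2)}=(\I\otimes x)\Delta(y)=y_{(1)}\otimes xy_{(2)}$, which is its left-hand side. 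Since the first bullet asserts the equality of the two arguments $x_{(1)}\otimes x_{(2)}y$ and $x\otimes y$, and $\Phi$ is injective, applying $\Phi$ (respectively $\Phi^{-1}$) converts it into and out of the second bullet, giving the equivalence.

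For the second pair of conditions I would run the identical argument with $\Phi^{-1}$ in place of $\Phi$. The first condition there is $x_{(1)}\otimes yx_{(2)}=x\otimes y$; now $\Phi^{-1}(x\otimes y)=y_{(1)}S^{-1}(x)\otimes y_{(2)}$ is the right-hand side of the last displayed identity, while applying $\Phi^{-1}$ to $x_{(1)}\otimes yx_{(2)}$ and using the $S^{-1}$-analogue $x_{(2)}S^{-1}(x_{(1)})\otimes x_{(3)}=\I\otimes x$ (available since the antipode is invertible) collapses it to $y_{(1)}\otimes y_{(2)}x$, the left-hand side. Bijectivity of $\Phi^{-1}$ then closes this equivalence as well.

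The only delicate point, which I regard as the \emph{main obstacle}, is the Sweedler bookkeeping in the two collapsing computations: one must expand $\Delta(x_{(2)}y)=x_{(2)}y_{(1)}\otimes x_{(3)}y_{(2)}$ (respectively $\Delta(yx_{(2)})$), regroup the three $x$-legs so that the antipode axiom in the form $S(x_{(1)})x_{(2)}\otimes x_{(3)}=\I\otimes x$ applies, and verify that the surviving factor of $x$ reappears multiplied into the correct tensor slot and on the correct side. This is routine once the grouping is fixed, and it is the invertibility of $\Phi$ that promotes these one-directional calculations into genuine equivalences.
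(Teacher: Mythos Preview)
Your argument is correct and is essentially the paper's own proof, repackaged more cleanly: the paper proves the forward implication by applying $\id\otimes\Delta$ and then $m\circ(S\otimes\id)$ to the first two legs, which is exactly your $\Phi=\ww_{rl}\circ(S\otimes\id)$, and its converse amounts to applying your $\Phi^{-1}$ (the paper's map $a\otimes b\otimes c\mapsto bS^{-1}(a)\otimes c$ after $\id\otimes\Delta$). Your single-bijection formulation handles both directions at once and makes the Sweedler bookkeeping slightly tidier, but the underlying computation is identical.
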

\begin{proof}
Let us prove the equivalence of the first and the second bullet point of the lemma. 

If $x_{(1)}\otimes x_{(2)}y = x\otimes y$ then  $x_{(1)}\otimes x_{(2)}y_{(1)}\otimes x_{(3)}y_{(2)}  = x\otimes y_{(1)}\otimes y_{(2)}$ which implies that $y_{(1)}\otimes x y_{(2)} = S(x)y_{(1)}\otimes y_{(2)}$. 

Conversely, if $y_{(1)}\otimes x y_{(2)} = S(x)y_{(1)}\otimes y_{(2)}$ then $y_{(1)}\otimes x_{(1)} y_{(2)}\otimes x_{(2)} y_{(3)}  = S(x)y_{(1)}\otimes y_{(2)}\otimes y_{(3)}$ and then 
 $x_{(1)} y_{(2)}S^{-1}(y_{(1)})\otimes x_{(2)} y_{(3)}  = y_{(2)}S^{-1}(y_{(1)}) x\otimes y_{(3)}$ which shows that $\Delta(x)(\I\otimes y) = x\otimes y$.
\end{proof}

\begin{proposition}\label{cor1}Let $P$ be a right group-like projection and 
  $\cV_P\subseteq\cN^{-lp}$ be the left coideal defined in \Cref{def_coids}. Then $\cV_P$ is a
  \begin{itemize}
  \item right ideal in $\cN^l_P$; 
  \item left ideal in $\cN^r_P$; 
  \item bilateral ideal in $\cN_P$.   
  \end{itemize}
Moreover $\cV_P$ is idempotent as a (left, right or bilateral) ideal, i.e. $\cV_P^2=\cV_P$. 

In particular  $\cV_P = \cN^{-lr}_P$ if and only if $\I\in\cV_P$. In that case we also have
\begin{equation*}
  \cN^l_P = \cN_P = \cN^r_P. 
\end{equation*}
\end{proposition}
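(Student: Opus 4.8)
The plan is to treat the three ideal assertions by two symmetric one-sided computations, deduce the bilateral case by intersection, establish idempotency from the fact that $\Delta$ is an algebra map, and finally read off the ``if and only if'' from the ideal property together with unitality of the $\cN$'s. Throughout I use that $\cV_P\subseteq\cN_P\subseteq\cN^l_P\cap\cN^r_P$ and that a typical element of $\cV_P$ has the form $v=(\mu\otimes\id)(\Delta(P))=\mu(P_{(1)})P_{(2)}$ for some $\mu\in\cA^*$.

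For the first bullet, fix $x\in\cN^l_P$, so $(\I\otimes P)\Delta(x)=x\otimes P$. By the second equivalence in \Cref{ideal_prop} (applied with $y=P$) this is the same as
\[\Delta(P)(\I\otimes x)=\Delta(P)(S^{-1}(x)\otimes\I),\qquad\text{i.e.}\qquad P_{(1)}\otimes P_{(2)}x=P_{(1)}S^{-1}(x)\otimes P_{(2)}.\]
Applying $\mu\otimes\id$ turns the left-hand side into $vx$ and the right-hand side into $(\mu(\,\cdot\,S^{-1}(x))\otimes\id)(\Delta(P))$, again an element of $\cV_P$; hence $\cV_P$ is a right ideal in $\cN^l_P$. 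The second bullet is entirely symmetric: for $x\in\cN^r_P$ the defining relation $\Delta(x)(\I\otimes P)=x\otimes P$ is, by the first equivalence in \Cref{ideal_prop} (again with $y=P$), equivalent to $(\I\otimes x)\Delta(P)=(S(x)\otimes\I)\Delta(P)$, and applying $\mu\otimes\id$ expresses $xv$ as $(\mu(S(x)\,\cdot\,)\otimes\id)(\Delta(P))\in\cV_P$. The third bullet is then immediate from $\cN_P=\cN^l_P\cap\cN^r_P$.

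For idempotency I would use that $\Delta(P)^2=\Delta(P^2)=\Delta(P)$. Expanding $\Delta(P)=\sum_i a_i\otimes b_i$ with the $a_i$ linearly independent, one has $\cV_P=\lin\{b_i\}$ and $\cV_P^2=\lin\{b_ib_j\}$, while the idempotency identity reads $\sum_{i,j}a_ia_j\otimes b_ib_j=\sum_k a_k\otimes b_k$. Applying $\mu\otimes\id$ gives, for an arbitrary $v=\sum_k\mu(a_k)b_k\in\cV_P$, the expression $v=\sum_{i,j}\mu(a_ia_j)b_ib_j$, which lies in $\cV_P^2$; together with the obvious $\cV_P^2\subseteq\cV_P$ (since $\cV_P\subseteq\cN_P$ is a subalgebra by the ideal property just proved) this yields $\cV_P^2=\cV_P$.

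Finally, for the equivalence: since each of $\cN_P,\cN^l_P,\cN^r_P$ is unital, any equality $\cV_P=\cN^{-lr}_P$ forces $\I\in\cV_P$. Conversely, if $\I\in\cV_P$ then for $x\in\cN^l_P$ we get $x=\I x\in\cV_P\cdot\cN^l_P\subseteq\cV_P$ by the right-ideal property, so $\cN^l_P\subseteq\cV_P$ and hence $\cV_P=\cN^l_P=\cN_P$; using the left-ideal property in $\cN^r_P$ the same argument gives $\cN^r_P\subseteq\cV_P$, so all four spaces coincide. The computations are routine once \Cref{ideal_prop} is in hand; the only conceptual point to notice is that idempotency is forced by $\Delta$ being multiplicative, so I expect the main thing to get right to be the bookkeeping of the Sweedler manipulations rather than any genuine obstacle.
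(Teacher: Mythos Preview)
Your proof is correct and follows the paper's approach almost exactly: the ideal assertions via \Cref{ideal_prop} and the final equivalence via ``ideal containing the unit'' are identical to the paper's argument. The one small difference is in the idempotency step: the paper observes that $\cV_P^2$ is again a left coideal (the product of left coideals is a left coideal) and contains $P=P^2$, so by minimality of $\cV_P$ among left coideals containing $P$ one gets $\cV_P\subseteq\cV_P^2$; your route via $\Delta(P)^2=\Delta(P)$ and slicing is equally valid and arguably more self-contained, since it avoids the implicit check that $\cV_P^2$ is a coideal.
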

\begin{proof}
  That $\cV_P\subseteq \cN^l_P$ is a right ideal can be seen by applying $\mu\otimes\id$  to the fourth bullet point in \Cref{ideal_prop} with $y=P$, where   $\mu\in \cA^*$. The claim that $\cV_P\subseteq \cN_P^r$ is a left ideal can be seen similarly. $\cV_P$ is then both a left and a right ideal in $\cN_P=\cN_P^l\cap \cN_P^r$. 

  As for the claim that $\cV_P$ is idempotent, this follows from the fact that it contains $P$ hence so does  $\cV_P^2$, meaning that $\cV_P\subseteq \cV_P^2$. 

  Finally, if $\I\in\cV_P$ then $\cV_P$ is a (left, right, or bilateral) ideal containing the unit in $\cN_P^{-lr}$ and hence coincides with all of them.
\end{proof}

\begin{proposition}\label{pr.np-fd}
  Let $P\in \cA$ be a right group-like projection. Then, $\cV_P\subseteq \cN^{-lr}_P$ defined by \Cref{eq:n1} is faithful as a
  \begin{itemize}
  \item right ideal over $\cN^l_P$;    
  \item left ideal over $\cN^r_P$;
  \item left or right ideal over $\cN_P$.     
  \end{itemize}
\end{proposition}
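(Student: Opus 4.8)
The plan is to prove the sharper statement that the relevant one-sided annihilators of $\cV_P$ vanish already in all of $\cA$; faithfulness over each of $\cN^l_P$, $\cN^r_P$ and $\cN_P$ then follows at once, since these are subalgebras of $\cA$ carrying $\cV_P$ as the appropriate (one- or two-sided) ideal by \Cref{cor1}. Concretely, for the first bullet I would compute the right annihilator $\{x\in\cA:\cV_Px=0\}$, for the second the left annihilator $\{x\in\cA:x\cV_P=0\}$, and show both are trivial.

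First I would convert an annihilation condition into a single tensor identity. Writing a typical element of $\cV_P$ as $(\mu\otimes\id)(\Delta(P))=\mu(P_{(1)})P_{(2)}$ with $\mu$ ranging over $\cA^*$, the condition $\cV_Px=0$ reads $(\mu\otimes\id)\bigl(\Delta(P)(\I\otimes x)\bigr)=0$ for every $\mu$. Because $\cA^*$ separates the points of $\cA$ (test against functionals dual to a basis), this is equivalent to the single identity $\Delta(P)(\I\otimes x)=P_{(1)}\otimes P_{(2)}x=0$ in $\cA\otimes\cA$. Symmetrically, $x\cV_P=0$ is equivalent to $(\I\otimes x)\Delta(P)=P_{(1)}\otimes xP_{(2)}=0$.

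Next I would collapse these identities using the antipode axiom together with the fact that $\varepsilon(P)=1$ for a right group-like projection (apply $\varepsilon\otimes\id$ to $\Delta(P)(\I\otimes P)=P\otimes P$). Applying $m\circ(S\otimes\id)$ to $P_{(1)}\otimes P_{(2)}x=0$ gives $S(P_{(1)})P_{(2)}x=\varepsilon(P)x=x$, whence $x=0$; this yields the first bullet. Applying $m\circ(\id\otimes S)$ to $P_{(1)}\otimes xP_{(2)}=0$ gives $P_{(1)}S(P_{(2)})S(x)=\varepsilon(P)S(x)=S(x)$, so $S(x)=0$ and hence $x=0$ by invertibility of $S$; this yields the second bullet. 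Finally, since $\cN_P=\cN^l_P\cap\cN^r_P\subseteq\cA$, the left and the right annihilator of $\cV_P$ inside $\cN_P$ are contained in the corresponding annihilators inside $\cA$, which are trivial; this settles the third bullet.

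The only step that requires any care is the passage from ``$vx=0$ for all $v\in\cV_P$'' to the tensor equation $\Delta(P)(\I\otimes x)=0$, i.e.\ the use of the separating property of $\cA^*$; after that, each bullet is a one-line application of the Hopf algebra axioms, and the defining relations of $\cN^{-lr}_P$ are never needed. It is worth emphasising that the annihilators are in fact trivial in the whole of $\cA$, which is precisely what makes faithfulness over each of the three coideal subalgebras automatic. Combined with the (routine) observation that $\cV_P$ is finite dimensional over $\kk$ — being the span of the right tensor legs of the finite expression $\Delta(P)$ — this is exactly what will force the finite dimensionality of $\cN_P$ in \Cref{cor.fd}, via the embedding of $\cN^l_P$ into $\mathrm{End}_{\kk}(\cV_P)$.
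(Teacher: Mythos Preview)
Your proof is correct and takes a genuinely different, more elementary route than the paper's. The paper argues only inside $\cN^r_P$: it observes that the multiplication map $\cN^r_P\otimes\cV_P\to\cV_P$ is a morphism of left $\cA$-comodules, deduces that the left annihilator $\cV_P^{\perp}\subset\cN^r_P$ is itself a left coideal, and then obtains a contradiction from the existence of $y\in\cV_P^{\perp}$ with $\varepsilon(y)\ne 0$ together with the relation $yP=\varepsilon(y)P$ valid in $\cN^r_P$. By contrast, you bypass the comodule machinery entirely and show that the one-sided annihilators of $\cV_P$ vanish in all of $\cA$, using only the separating property of $\cA^*$, the antipode identity $S(P_{(1)})P_{(2)}=\varepsilon(P)\I$ (and its mirror), and $\varepsilon(P)=1$. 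This yields a strictly stronger conclusion with less effort; the paper's approach, on the other hand, showcases the coideal structure of annihilators, a technique that may be of independent interest but is not needed here.
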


Before moving on to the proof we record a consequence. 

\begin{corollary}\label{cor.fd}
  For any right group-like projection $P\in \cA$ the associated left coideal subalgebras $\cN^{-lr}_P$ are finite-dimensional.
\end{corollary}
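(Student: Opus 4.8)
The plan is to derive \Cref{cor.fd} directly from \Cref{pr.np-fd}, using the single extra observation that the left coideal $\cV_P$ is itself finite-dimensional. The mechanism is standard: a finite-dimensional module that is faithful over a $\kk$-algebra forces that algebra to embed, as a vector space, into the (finite-dimensional) endomorphism algebra of the module.

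First I would verify that $\dim_\kk\cV_P<\infty$. Writing $\Delta(P)=\sum_{i=1}^n a_i\otimes b_i$ as a finite sum with the $a_i$ linearly independent in $\cA$, every element of $\cV_P$ has the shape $(\mu\otimes\id)(\Delta(P))=\sum_i \mu(a_i)\,b_i$ and so lies in $\lin\{b_1,\dots,b_n\}$; conversely, choosing $\mu_j\in\cA^*$ with $\mu_j(a_i)=\delta_{ij}$ (possible precisely because the $a_i$ are linearly independent) produces $(\mu_j\otimes\id)(\Delta(P))=b_j$. Hence $\cV_P=\lin\{b_1,\dots,b_n\}$ is finite-dimensional.

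Next I would feed this into the faithfulness statements of \Cref{pr.np-fd}. Take the right-ideal case: right multiplication makes $\cV_P$ a right $\cN_P^l$-module, and \Cref{pr.np-fd} says this module is faithful, i.e. its annihilator $\{n\in\cN_P^l:\cV_P n=0\}$ vanishes. Since $\cN_P^l$ acts by $\kk$-linear maps on the finite-dimensional space $\cV_P$, the assignment $n\mapsto(v\mapsto vn)$ is an injective $\kk$-linear map $\cN_P^l\hookrightarrow\mathrm{End}_\kk(\cV_P)$, whence $\dim_\kk\cN_P^l\le(\dim_\kk\cV_P)^2<\infty$. The symmetric argument with left multiplication disposes of $\cN_P^r$, and either of the two one-sided faithful actions of \Cref{pr.np-fd} over $\cN_P$ handles $\cN_P=\cN_P^l\cap\cN_P^r$.

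Since \Cref{pr.np-fd} carries all the real weight, there is essentially no obstacle remaining at this stage. The only points to keep straight are elementary: that $\cV_P$ is finite-dimensional, and that the map $n\mapsto(v\mapsto vn)$ is an algebra anti-homomorphism into $\mathrm{End}_\kk(\cV_P)$. The anti-homomorphism feature is immaterial for the dimension count, because only injectivity as a $\kk$-linear map is used.
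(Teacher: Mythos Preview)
Your proposal is correct and follows essentially the same approach as the paper: invoke \Cref{pr.np-fd} to obtain a faithful action of $\cN_P^{-lr}$ on $\cV_P$, and then embed into $\mathrm{End}_\kk(\cV_P)$. The only difference is that you spell out explicitly why $\cV_P$ is finite-dimensional, a point the paper leaves implicit.
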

\begin{proof}
  Indeed, by \Cref{pr.np-fd} they act faithfully on a finite-dimensional vector space $\cV_P$, and hence admit embeddings into the finite-dimensional algebra $\mathrm{End}(\cV_P)$.
\end{proof}

\begin{proof}[Proof of \Cref{pr.np-fd}]
To fix ideas and notation we will show that $\cV_P\subseteq \cN_P^r$ is faithful as a left ideal. 
  
According to \Cref{cor1} $\cV_P\subseteq \cN^r_P$ is an inclusion of left coideal subalgebras of $\cA$, and hence the multiplication map
  \begin{equation*}
    \cN^r_P\otimes \cV_P\to \cV_P 
  \end{equation*}
  is a morphism of left $\cA$-comodules. It induces a morphism
  \begin{equation*}
    \cN^r_P\to \cV_P\otimes \cV_P^*\cong \mathrm{End}(\cV_P)
  \end{equation*}
  in the same category of $\cA$-comodules, and hence its kernel must again be a left $\cA$-coideal. In other words, the (two-sided) ideal
  \begin{equation*}
    \cV_P^{\perp}:=\{x\in \cN^r_P\ |\ x\cV_P=0\}
  \end{equation*}
  is also a left $\cA$-coideal. The identity $x_{(1)}\varepsilon(x_{(2)})=x$ for $x\in \cV_P^{\perp}$ then shows that if $\cV_P^{\perp}$ is non-zero, then it contains elements $y$ such that $\varepsilon(y)\ne 0$.

  This leads to a contradiction: on the one hand we have $P\in \cV_P$ and hence $yP=0$, while on the other hand
  \begin{equation*}
    yP = \varepsilon(y)P\ne 0
  \end{equation*}
  by assumption. It follows that $\cV_P^{\perp}=\{0\}$, which is a rephrasing of the conclusion we were after.
\end{proof}
Knowing that $\cN_P$ is finite dimensional we can characterize the cases when   $\cV_P = \cN_P$. For this purpose  following property will be needed. 
\begin{definition}\label{def.frob}
A finite-dimensional algebra is {\it quasi-Frobenius} (or QF) if it is injective as a module over itself. 
\end{definition}

Using \cite[Lemma 2.1 and Theorem 2.2]{Kopp} we get
\begin{theorem}\label{qf_fun}
  Let $P\in\cA$ be a right group-like projection, $\cN_P$ the assigned left coideal subalgebra and $\cV_P\subset\cN_P$ the left coideal as defined in \Cref{def_coids}. Then $\cV_P = \cN_P$ if and only if $\cN_P$ is quasi-Frobenius.
\end{theorem}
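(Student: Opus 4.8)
The plan is to deduce the equivalence from \cite[Theorem 2.2]{Kopp}, so the first task is to exhibit $\cN_P$, together with $P$, as the data that theorem requires: a finite-dimensional left coideal subalgebra carrying a two sided counital integral. Finite-dimensionality is exactly \Cref{cor.fd}. That $P$ is a two sided integral for $\cN_P$ is the content of \eqref{eq:int1}, namely $xP=\varepsilon(x)P=Px$ for all $x\in\cN_P$, and it is counital because applying $\varepsilon\otimes\id$ to $\Delta(P)(\I\otimes P)=P\otimes P$ gives $\varepsilon(P)P=P$, whence $\varepsilon(P)=1$ since $P\neq 0$. By \Cref{pr.min-cntr} this integral is moreover the minimal central projection determined by $\cN_P$.

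The second step is to match $\cV_P$ with the object through which \cite[Theorem 2.2]{Kopp} detects the quasi-Frobenius property. The space $\cV_P$ of \Cref{def_coids} is by definition the image of the hit map $\cA^*\ni\mu\mapsto(\mu\otimes\id)(\Delta(P))$, that is, the left coideal generated by the integral $P$, and \Cref{cor1} identifies it as a two sided ideal of $\cN_P$. The structural facts about this ideal supplied by \cite[Lemma 2.1]{Kopp}---that the hit map is a morphism in the relevant module/comodule category and that its image is an idempotent ideal---are in our situation already recorded in \Cref{cor1}.

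With the hypotheses and this dictionary in place, \cite[Theorem 2.2]{Kopp} yields the conclusion directly: $\cN_P$ is quasi-Frobenius if and only if the hit map $\mu\mapsto(\mu\otimes\id)(\Delta(P))$ is surjective onto $\cN_P$, equivalently $\cV_P=\cN_P$. Since both spaces are finite-dimensional, surjectivity is the whole of the matter, and by \Cref{cor1} it is in turn equivalent to $\I\in\cV_P$.

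I expect the only genuine difficulty to be bookkeeping rather than conceptual: confirming that our $\cV_P$ coincides, under the left/right and antipode conventions built into \Cref{def_coids}, with the integral-generated ideal appearing in \cite{Kopp}, and that Kopp's criterion for the quasi-Frobenius property (nondegeneracy of the integral) translates exactly into the fullness $\cV_P=\cN_P$ of that ideal. Once this translation is fixed the equivalence is immediate.
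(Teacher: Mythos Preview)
Your proposal is correct and follows exactly the route the paper takes: the paper's entire argument is the single sentence ``Using \cite[Lemma 2.1 and Theorem 2.2]{Kopp} we get'', and you have simply spelled out the verification of the hypotheses (finite-dimensionality via \Cref{cor.fd}, counital two-sided integral via \eqref{eq:int1}) and the identification of $\cV_P$ with Koppinen's integral-generated ideal needed to invoke those results.
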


We do not know an example where $\cV_P\ne \cN_P$, but note that the finite-dimensionality of $\cN_P$ together with the faithfulness result in \Cref{pr.np-fd} are not themselves sufficient to conclude this. 

\begin{example}\label{ex.ff}
  We will describe a finite-dimensional algebra $A$ with a proper idempotent ideal $I$ (i.e. $I^2=I$) that is both left and right faithful. 

  $A$ will be spanned by a projection $e$, its complementary projection $1-e$ and the basis $\{x,y,z\}$ for its Jacobson radical, subject to the following conditions:
  \begin{itemize}
  \item $x=ex(1-e)$;
  \item $y=(1-e)ye$;
  \item $yx=0$ and $xy=z$;
  \item $xz=zx=0=yz=zy$;    
  \end{itemize}
everything else follows (e.g. $x^2=0$ because $(1-e)e=0$). 
  
We set
\begin{equation*}
 I=\mathrm{span}\{e,x,y,z\} = Ae+eA = AeA, 
\end{equation*}
clearly an idempotent ideal. Checking faithfulness is a simple matter (left to the reader) of writing all five basis elements of $A$ as matrices for the multiplication action and observing that said matrices are linearly independent. In fact it suffices to check this for left multiplication, since the map interchanging $x$ and $y$ and fixing $e,z$ is an involutive anti-automorphism of $A$ fixing $I$.
\end{example}

\begin{definition}\label{def.wf}
  An algebra $R$ is {\it weakly finite} if $xy=1\Rightarrow yx=1$ for all $x,y\in M_n(R)$ and all positive integers $n$. 
\end{definition}

\cite[Theorem 6.1]{skr-proj}  shows that finite-dimensional left coideal subalgebras in weakly finite Hopf algebras are automatically Frobenius (and hence also QF). Therefor using \Cref{qf_fun} we get the next result.

\begin{corollary}\label{th.wf-hopf}
  Let $\cA$ be a weakly finite Hopf algebra (e.g. $\cA$ is left or right noetherian) and $P\in \cA$ a right group-like projection. Then $
    \cN_P = \cV_P$.
\end{corollary}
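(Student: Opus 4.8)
The plan is to reduce the statement directly to the quasi-Frobenius criterion of \Cref{qf_fun}. By \Cref{cor.fd} the left coideal subalgebra $\cN_P\subseteq\cA$ is finite-dimensional, so it satisfies the hypotheses of \cite[Theorem 6.1]{skr-proj}. First I would quote that theorem to conclude that $\cN_P$, being a finite-dimensional left coideal subalgebra of the weakly finite Hopf algebra $\cA$, is Frobenius. Since any Frobenius algebra is self-injective, it is in particular quasi-Frobenius in the sense of \Cref{def.frob}. Then \Cref{qf_fun} applies verbatim and yields $\cV_P=\cN_P$, which is the assertion.

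The one auxiliary point is the parenthetical, namely that a left or right noetherian Hopf algebra is weakly finite, so that the advertised examples really do satisfy the standing hypothesis. I would dispose of this by the classical observation that one-sided noetherian rings are stably finite: if $R$ is left noetherian then $M_n(R)$ is again left noetherian for every $n$, and a surjective endomorphism of a noetherian module over any ring is automatically injective. Applying the latter to the free module $R^n$ over $M_n(R)$ shows that $xy=\I$ forces $yx=\I$ for $x,y\in M_n(R)$, which is exactly the defining condition of \Cref{def.wf}; the right-noetherian case is symmetric.

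The argument as such is short, so the \emph{main obstacle} is not internal: all the substantive work has been absorbed into \cite[Theorem 6.1]{skr-proj} and into \Cref{qf_fun}. The only thing I would be careful to check is that the hypotheses of \cite[Theorem 6.1]{skr-proj} match ours exactly---in particular that it requires nothing beyond weak finiteness of the ambient Hopf algebra and finite-dimensionality of the coideal subalgebra---so that the quotation is legitimate and no hidden regularity assumption (cosemisimplicity, invertibility of the antipode beyond what we already assume, and the like) is being smuggled in.
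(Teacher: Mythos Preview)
Your proposal is correct and follows exactly the same route as the paper: invoke \cite[Theorem 6.1]{skr-proj} to conclude that the finite-dimensional left coideal subalgebra $\cN_P$ (finite-dimensional by \Cref{cor.fd}) is Frobenius, hence quasi-Frobenius, and then apply \Cref{qf_fun}. Your added justification of the parenthetical (one-sided noetherian $\Rightarrow$ weakly finite) is a welcome elaboration that the paper leaves implicit.
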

 
The results of \cite{Kopp} allows us also to deduce certain semisimplicity result  about $\cN_P$. Before formulating it let us prove the following proposition.

\begin{proposition}\label{coipr}
   Let $P\in\mathcal{A}$ be   right group-like  projection. Then the following are equivalent:
   \begin{itemize}
   \item $P$ is a two sided group-like projection;
   \item we have $S^2(\cN_P) = \cN_P$;
   \item we have $\Delta(P)\in S(\cN_P)\otimes \cN_P$.
   \end{itemize}
   Moreover, if $P$ is two-sided then $\cV_P = \cN^{-lr}_P$.  
\end{proposition}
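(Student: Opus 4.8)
The plan is to make the condition $S(P)=P$ the hub, since \Cref{th.summary} already equates it with $P$ being two sided; I would then prove $(1)\Leftrightarrow(2)$ and $(1)\Leftrightarrow(3)$ separately and handle the ``moreover'' last. For $(1)\Leftrightarrow(2)$, the forward direction starts from $S^2(P)=P$ (which two-sidedness gives via \Cref{th.summary}) and uses that $S^2$ is a coalgebra automorphism, $\Delta\circ S^2=(S^2\otimes S^2)\circ\Delta$. Applying $S^2\otimes S^2$ to the two defining relations of $\cN_P$ in \Cref{eq:np} and using $S^2(P)=P$ shows $S^2(\cN_P)\subseteq\cN_P$, and the same argument run with $S^{-2}$ (note $S^{-2}(P)=P$) gives the reverse inclusion, hence $S^2(\cN_P)=\cN_P$. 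For the converse I would lean on \Cref{pr.min-cntr}: $S^2$ restricts to an algebra automorphism of $\cN_P$ with $\varepsilon\circ S^2=\varepsilon$, so it preserves $\ker(\varepsilon|_{\cN_P})=(1-P)\cN_P$; then $1-S^2(P)$ and $1-P$ are central idempotents of $\cN_P$ generating the same ideal, which forces $S^2(P)=P$, and \Cref{th.summary} returns $(1)$.

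For $(1)\Leftrightarrow(3)$: assuming $S(P)=P$, the identity $\Delta(P)=\Delta(S(P))=S(P_{(2)})\otimes S(P_{(1)})$ identifies the left support $W$ of $\Delta(P)$ with $S(\cV_P)$ (slice the second leg by $\mu$ and replace $\mu$ by $\mu\circ S$), so that $\Delta(P)\in W\otimes\cV_P=S(\cV_P)\otimes\cV_P\subseteq S(\cN_P)\otimes\cN_P$. Conversely, from $\Delta(P)\in S(\cN_P)\otimes\cN_P$ I would apply $\id\otimes\varepsilon$ to obtain $P\in S(\cN_P)$, i.e. $S^{-1}(P)\in\cN_P$; \Cref{pr.min-cntr} then gives $S^{-1}(P)P=\varepsilon(S^{-1}(P))P=P$, and applying $S$ converts this into $S(P)P=S(P)$. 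Since $S(P)P=P$ holds for every right group-like projection (recorded just before \Cref{th.summary}), we conclude $S(P)=P$.

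The crux is the ``moreover''. By \Cref{cor1} it reduces to showing $\I\in\cV_P$, and the idea is to recognise the standard antipode relation $S(P_{(1)})P_{(2)}=\varepsilon(P)\I=\I$ as an identity taking place inside $\cV_P$. Writing $\Delta(P)=\sum_k a_k\otimes b_k$ with $a_k\in W$ and $b_k\in\cV_P$ (legitimate because $\Delta(P)$ lies in the tensor product of its two supports), this relation reads $\sum_k S(a_k)b_k=\I$. The companion slicing (now in the first leg) of $\Delta(P)=S(P_{(2)})\otimes S(P_{(1)})$ gives $S(W)=\cV_P$, so each $S(a_k)\in\cV_P$; and since $\cV_P$ is a bilateral ideal of $\cN_P$ by \Cref{cor1}, it is closed under multiplication, whence $S(a_k)b_k\in\cV_P$ and therefore $\I=\sum_k S(a_k)b_k\in\cV_P$. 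Invoking \Cref{cor1} once more yields $\cV_P=\cN^{-lr}_P$.

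I expect the main obstacle to be precisely this last step. The three equivalences are essentially bookkeeping over \Cref{th.summary} and \Cref{pr.min-cntr}, whereas the membership $\I\in\cV_P$ hinges on the non-obvious observation that two-sidedness drives \emph{both} factors of the antipode relation into the multiplicatively closed ideal $\cV_P$. The points I would double-check most carefully are the support decomposition $\Delta(P)\in W\otimes\cV_P$ and the identity $S(W)=\cV_P$, since these two facts are what turn a generic antipode identity into an element of $\cV_P$.
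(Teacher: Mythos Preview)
Your proposal is correct and follows essentially the same route as the paper. The ``moreover'' argument is identical: the paper also writes $P_{(1)}\in S(\cV_P)$, $P_{(2)}\in\cV_P$ and concludes $\I=S(P_{(1)})P_{(2)}\in\cV_P$ via \Cref{cor1}.

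The only differences are cosmetic. For $(1)\Leftrightarrow(2)$ the paper compresses your two directions into one line by observing directly that $S^2(\cN_P)=\cN_{S^2(P)}$ (since $S^2$ is a Hopf automorphism) and then invoking the injectivity of $P\mapsto\cN_P$ from \Cref{pr.min-cntr}; this replaces your central-idempotent argument but is logically equivalent. For $(3)\Rightarrow(1)$ the paper multiplies $\Delta(P)$ by $S(P)\otimes\I$ on each side to get $PS(P)=S(P)=S(P)P$ and then invokes \Cref{th.summary}, whereas you apply $\id\otimes\varepsilon$ first; again, both routes land on the same commutation relation. Your worries about the support decomposition and $S(W)=\cV_P$ are unfounded: these are immediate from $\Delta(P)=(S\otimes S)\Delta^{\mathrm{op}}(P)$ together with $S^2(P)=P$.
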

\begin{proof}
Suppose that $P$ is a right group-like projection. Note that $S^2(P)$ is a right group-like projection and  $S^2(\cN_P) = \cN_{S^2(P)}$. In particular $S^2(\cN_P) = \cN_P$ if and only if $P = S^2(P)$ (c.f. \Cref{pr.min-cntr}) and this by \Cref{th.summary} holds if and only if $P$ is also a left group-like projection. 

Now, if $P$ is a two sided group-like projection then $S(P) = P$ and  it is easy to check that   $\Delta(P)\subset S(\cN_P)\otimes \cN_P$. Conversely if $P$ is a right group-like projection and $\Delta(P)\in S(\cN_P)\otimes \cN_P$  then $\Delta(P)(S(P)\otimes\I) = S(P)\otimes P = (S(P)\otimes\I)\Delta(P)$. Applying $\id\otimes\varepsilon$ we get $PS(P) = S(P)P = S(P)$ and we conclude that $P$ is a two sided group-like projection using  \Cref{th.summary}.

In order to prove the last claim of the proposition it is enough to show that $\I\in \cV_P$. Using Sweedler's notation $\Delta(P) = P_{(1)}\otimes P_{(2)}$ and we have $P_{(1)}\in S(\cV_P)$ and $P_{(2)}\in\cV_P$.  In particular $\I = S(P_{(1)})P_{(2)}\in\cV_P$ and we are done.
\end{proof}

Using \cite[Propositions 5.1 and 5.2]{Kopp} and \Cref{coipr} we get the next result.
\begin{theorem}\label{th.np-ss}
  If $P$ be a right group-like projection in $\mathcal{A}$, then $\cN_P$ is semisimple if and only if $\cN_P$ is a direct summand of $\mathcal{A}$ viewed as a $\cN_P$ two-sided module.

If $P$ is a two sided group-like projection in $\mathcal{A}$, then $\cN_P$ is semisimple.
\end{theorem}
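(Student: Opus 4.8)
The plan is to handle the two assertions in turn, using as common input that $\cN_P$ is finite dimensional (\Cref{cor.fd}) and that $P$ is a counital two sided integral for it: indeed \eqref{eq:int1} reads $xP=\varepsilon(x)P=Px$ for every $x\in\cN_P$, and specialising to $x=P$ gives $\varepsilon(P)=1$. Thus $(\cN_P,P)$ is exactly a finite dimensional left coideal subalgebra equipped with a counital two sided integral, which is the setting of \cite[Propositions 5.1 and 5.2]{Kopp}. For the first assertion I would then simply invoke the Maschke-type criterion of \cite[Proposition 5.1]{Kopp}, which for such a pair $(\cN,\Lambda)$ says that $\cN$ is semisimple if and only if it splits off $\cA$ as a two sided $\cN$-module; applied with $\cN=\cN_P$ and $\Lambda=P$ this is precisely the stated equivalence, and nothing further is required.

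For the second assertion I would reduce to the first by producing the bimodule splitting whenever $P$ is two sided. By \Cref{coipr}, two-sidedness yields both $S^2(\cN_P)=\cN_P$ and $\Delta(P)\in S(\cN_P)\otimes\cN_P$; writing $\Delta(P)=P_{(1)}\otimes P_{(2)}$ this says $P_{(1)}\in S(\cN_P)$ and $P_{(2)}\in\cN_P$, whence also $S(P_{(1)})\in S^2(\cN_P)=\cN_P$. This membership is the precise input needed to manufacture, as in \cite[Proposition 5.2]{Kopp}, an $\cN_P$-bilinear projection $\cA\to\cN_P$ restricting to the identity on $\cN_P$ --- heuristically an averaging built from $P$ --- thereby realising $\cN_P$ as a two sided $\cN_P$-module direct summand of $\cA$. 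The first assertion then delivers semisimplicity.

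I expect the only delicate point to be the construction and $\cN_P$-bilinearity of this projection. Concretely, one must use the left and right group-like identities for the two sided $P$, namely $\Delta(P)(\I\otimes P)=P\otimes P=(\I\otimes P)\Delta(P)$ together with their left analogues, and coassociativity, to transport an arbitrary $x\in\cN_P$ across the two tensor legs of $\Delta(P)$. This is the step where two-sidedness, rather than mere right group-likeness, is genuinely used, and where the containment $\Delta(P)\in S(\cN_P)\otimes\cN_P$ supplied by \Cref{coipr} is indispensable.
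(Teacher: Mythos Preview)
Your proposal is correct and follows essentially the same route as the paper: both assertions are obtained by invoking \cite[Propositions 5.1 and 5.2]{Kopp} once one knows that $\cN_P$ is finite dimensional with counital two sided integral $P$, and for the second assertion the requisite hypothesis of Koppinen's \cite[Proposition 5.2]{Kopp} is supplied by \Cref{coipr}. The paper simply cites these results without unpacking the averaging construction you sketch, but your elaboration is faithful to what those citations contain.
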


\Cref{pr.np-fd} allows us to give the following partial characterization of coideal subalgebras of the form $\cN_P$.

\begin{proposition}\label{le.n=np}
  Suppose the coideal subalgebra $\cN\subset \mathcal{A}$ is semisimple. Then, $\cN=\cN_P$ for some right group-like projection $P$ and $\cN_P = \cV_P$. 
\end{proposition}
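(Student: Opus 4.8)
The plan is to build the required projection $P$ out of the Wedderburn structure of $\cN$, verify directly that it is right group-like, and then force the chain $\cV_P\subseteq\cN\subseteq\cN_P$ to collapse to a single subalgebra using the faithfulness statement of \Cref{pr.np-fd}.

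First I would construct $P$. The counit restricts to a nonzero character $\varepsilon|_{\cN}\colon\cN\to\kk$, so $\cN^-=\cN\cap\ker\varepsilon$ is a two-sided ideal of codimension one. Since $\cN$ is semisimple, this ideal is a direct summand and is complemented by $\kk P$ for a central idempotent $P$ with $\varepsilon(P)=1$; because $P\cN=\kk P$ is one-dimensional, the identity $Px=\varepsilon(x)P=xP$ holds for every $x\in\cN$, so $P$ is a counital two-sided integral for $\cN$. To see that $P$ is moreover a right group-like projection I would use that $\cN$ is a left coideal: writing $\Delta(P)=\sum_i a_i\otimes n_i$ with $n_i\in\cN$, the integral property together with the counit axiom $(\id\otimes\varepsilon)\Delta(P)=P$ yields
\[(\I\otimes P)\Delta(P)=\sum_i a_i\otimes\varepsilon(n_i)P=P\otimes P=\Delta(P)(\I\otimes P).\]
This is the conceptual heart of the argument: it is exactly where the left coideal hypothesis converts the purely algebraic integral property into the coalgebraic group-like relations. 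The same computation applied to an arbitrary $x\in\cN$ shows $\cN\subseteq\cN_P$, while $\Delta(P)\in\cA\otimes\cN$ gives $\cV_P\subseteq\cN$, so altogether $\cV_P\subseteq\cN\subseteq\cN_P$.

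It then remains to show these inclusions are equalities, and this is the step I expect to carry the real weight. By \Cref{cor1}, $\cV_P$ is a bilateral ideal of $\cN_P$, hence a two-sided ideal of the intermediate semisimple algebra $\cN$; as such it is of the form $\cV_P=e\cN$ for a central idempotent $e\in\cN$. The faithfulness of $\cV_P$ over $\cN_P$ proved in \Cref{pr.np-fd} restricts to faithfulness over the subalgebra $\cN$, so from $(1-e)\cV_P=(1-e)e\cN=0$ I conclude $1-e=0$, that is $\cV_P=\cN$ and in particular $\I\in\cV_P$. Finally \Cref{cor1} turns $\I\in\cV_P$ into $\cV_P=\cN_P$, and combining the two identities gives $\cN=\cV_P=\cN_P$, as desired. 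The only genuine obstacle is recognizing that semisimplicity is precisely the hypothesis that makes the faithfulness of \Cref{pr.np-fd} bite: it guarantees that the ideal $\cV_P$ is a unital direct factor of $\cN$, so that faithfulness can be satisfied only when that factor is all of $\cN$.
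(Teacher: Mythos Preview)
Your proof is correct and follows essentially the same route as the paper's: construct $P$ as the central idempotent complementing $\ker(\varepsilon|_{\cN})$, obtain the chain $\cV_P\subseteq\cN\subseteq\cN_P$, and then use that $\cV_P$ is a faithful ideal (via \Cref{pr.np-fd}) together with the fact that ideals in a semisimple algebra are generated by central idempotents to force $\cV_P=\cN$ and hence $\I\in\cV_P$. The only difference is that you spell out explicitly why $P$ is a right group-like projection and work with the central idempotent $e$ by name, whereas the paper leaves both of these steps implicit.
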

\begin{proof}
  The semisimplicity of $\cN$ means that $\ker(\varepsilon|_{\cN})$ is of the form $\cN(1-P)$ for a projection $P$ which is central in $\cN$, i.e. $\cN$ automatically has an integral $P$. We then have
  \begin{equation}
    \label{eq:n1nnp}
    \cV_P\subseteq \cN\subseteq \cN_P,
  \end{equation}
and it suffices to show that $\cV_P=\cN$ (for then $\I\in \cV_P$ and hence the inclusions in \Cref{eq:n1nnp} are all equalities). 

Finally, to prove $\cV_P=\cN$ recall that the former is a faithful ideal in the latter by \Cref{pr.np-fd}; since ideals in semisimple algebras are generated by central projections they cannot be (either left or right) faithful if proper. We thus have $\cV_P=\cN_P = \cN$, as claimed.   
\end{proof}

Finally, the results gathered thus far suffice to identify two-sided group-like projections as those giving rise to a certain class of coideal subalgebras. For locally compact quantum group the analog of the next result was obtained in \cite{Fall_Kasp}. It should also be compared with \cite[Proposition 5.2]{Kopp}. 

\begin{theorem}\label{th.2sd-s2}
  Let $\cA$ be an arbitrary Hopf algebra. The correspondence $P\mapsto \cN_P$ induces a bijection between the set of two-sided group-like projections and the set of semisimple coideal subalgebras of $\cA$ preserved by the squared antipode. 
\end{theorem}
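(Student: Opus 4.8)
The plan is to assemble the bijection entirely from results already in place, checking in turn that $P\mapsto\cN_P$ is well defined into the stated target, injective, and surjective. No genuinely new computation should be needed; the content lies in matching up the earlier statements, and I would present it in exactly that order.

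For well-definedness I would begin with a two-sided group-like projection $P$ and verify that $\cN_P$ belongs to the target set. That $\cN_P$ is semisimple is precisely the second assertion of \Cref{th.np-ss}. That $\cN_P$ is preserved by the squared antipode is the equivalence of the first two bullet points of \Cref{coipr}: being two-sided forces $S^2(\cN_P)=\cN_P$. Injectivity is then immediate from \Cref{pr.min-cntr}, which already records that $\cN_{P_1}=\cN_{P_2}$ implies $P_1=P_2$, the projection being recovered as the unique minimal central idempotent cutting out the counit on $\cN_P$.

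The substance of the argument is surjectivity. Given a semisimple coideal subalgebra $\cN\subset\cA$ that is preserved by $S^2$, I would invoke \Cref{le.n=np}: semisimplicity alone produces a right group-like projection $P$, namely the central integral cutting out $\ker(\varepsilon|_{\cN})=\cN(1-P)$, with $\cN=\cN_P$ (and moreover $\cN_P=\cV_P$). It then remains only to promote $P$ to a two-sided group-like projection, and for this I would feed the hypothesis $S^2(\cN)=\cN$, i.e. $S^2(\cN_P)=\cN_P$, into \Cref{coipr}, whose equivalence of the first two bullets yields exactly that $P$ is two-sided. This $P$ is a preimage of $\cN$ under the correspondence, which completes surjectivity.

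The only place where care is warranted, and the closest thing to an obstacle, is ensuring that the right group-like projection handed back by \Cref{le.n=np} is the very same $P$ to which \Cref{coipr} is then applied. This is automatic: both refer to the central integral of $\cN=\cN_P$, whose uniqueness is guaranteed by \Cref{pr.min-cntr}. Everything else is bookkeeping, so I expect the proof to be short, amounting to citing \Cref{th.np-ss}, \Cref{coipr}, \Cref{pr.min-cntr}, and \Cref{le.n=np} in the pattern just described.
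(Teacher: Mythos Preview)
Your proposal is correct and follows essentially the same approach as the paper: well-definedness via \Cref{th.np-ss} and \Cref{coipr}, injectivity via \Cref{pr.min-cntr}, and surjectivity via \Cref{le.n=np} combined with \Cref{coipr} to upgrade the right group-like projection to two-sided. If anything, your version is more careful about which lemma is responsible for which step than the paper's own one-line proof, which cites only \Cref{th.np-ss}, \Cref{coipr}, and \Cref{le.n=np} without separating out the $S^2$-invariance and injectivity ingredients.
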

\begin{proof}
  The map $P\mapsto \cN_P$ does indeed land in the set of semisimple coideal subalgebras fixed by $S^2$ by \Cref{th.np-ss}. It is one-to-one by \Cref{coipr} and onto by \Cref{le.n=np}.
\end{proof}

\subsection{Miscellaneous results}\label{subsec_misc}
In this subsection we shall prove some loosely related results concerning group-like projections, coideals, faithful flatness, Hopf subalgebras etc.   
 \begin{proposition}
  Let $P$ be a right group-like projection. Then $\cN_P$ forms a Hopf subalgebra if and only if $\Delta(P)\in \cN_P\otimes \cN_P$. In particular $P$ is two sided group-like projection in this case. 
\end{proposition}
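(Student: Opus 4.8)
The forward implication is immediate: if $\cN_P$ is a Hopf subalgebra then it is in particular closed under $\Delta$, and since $P\in\cN_P$ this gives $\Delta(P)\in\cN_P\otimes\cN_P$ at once. The substance is the converse, so assume $\Delta(P)\in\cN_P\otimes\cN_P$. The plan is to first upgrade $P$ to a two sided group-like projection, and then exploit the resulting identification $\cV_P=\cN_P$ supplied by \Cref{coipr} to verify closure under comultiplication and under the antipode.

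To see that $P$ is two sided I would use the integral relation \eqref{eq:int1}, namely $aP=\varepsilon(a)P=Pa$ for every $a\in\cN_P$. Writing $\Delta(P)=\sum_i a_i\otimes b_i$ with all $a_i,b_i\in\cN_P$ (possible by hypothesis), one computes $\Delta(P)(P\otimes\I)=\sum_i a_iP\otimes b_i=\sum_i\varepsilon(a_i)P\otimes b_i=P\otimes P$, and symmetrically $(P\otimes\I)\Delta(P)=P\otimes P$, where $\sum_i\varepsilon(a_i)b_i=(\varepsilon\otimes\id)\Delta(P)=P$. Thus $P$ is a left group-like projection, hence two sided; this is the promised ``in particular'' clause. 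By \Cref{Corr1} we then have $S(P)=P$, and by \Cref{coipr} we have $\cV_P=\cN_P$.

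For closure under $\Delta$, I would use that every element of $\cN_P=\cV_P$ has the form $x=(\mu\otimes\id)(\Delta(P))$. Since $x\in\cN_P$ and $\cN_P$ is a left coideal subalgebra we already have $\Delta(x)\in\cA\otimes\cN_P$, so, using $\cN_P\otimes\cN_P=(\cN_P\otimes\cA)\cap(\cA\otimes\cN_P)$, it remains to check $\Delta(x)\in\cN_P\otimes\cA$. Testing the right leg against an arbitrary $\nu\in\cA^*$ and applying coassociativity yields $(\id\otimes\nu)(\Delta(x))=(\mu\otimes\id)(\Delta(z))$, where $z=(\id\otimes\nu)(\Delta(P))$ is a left leg of $\Delta(P)$ and hence lies in $\cN_P$ by hypothesis; the left coideal property then gives $(\mu\otimes\id)(\Delta(z))\in\cN_P$, which is exactly what is needed.

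For closure under $S$, I would combine $S(P)=P$ with $\Delta\circ S=(S\otimes S)\circ\Delta^{\mathrm{op}}$ to obtain $\Delta(P)=S(P_{(2)})\otimes S(P_{(1)})$, whence the span of the left legs of $\Delta(P)$ equals $S(\cV_P)$, using invertibility of $S$. Since the hypothesis says precisely that these left legs lie in $\cN_P=\cV_P$, we conclude $S(\cN_P)=S(\cV_P)\subseteq\cN_P$. Being a left coideal subalgebra stable under both $\Delta$ and $S$, $\cN_P$ is then a Hopf subalgebra. I expect the only delicate point to be the Sweedler-index bookkeeping in the $\Delta$-closure step; everything else reduces to the integral relation and to the already-established equivalence $\cV_P=\cN_P$ for two sided projections.
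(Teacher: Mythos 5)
Your proof is correct and follows essentially the same route as the paper: two-sidedness of $P$ from the integral relation $aP=\varepsilon(a)P=Pa$, then $\cN_P=\cV_P$ and $S(P)=P$ via \Cref{coipr} and \Cref{Corr1}, then antipode-stability from the hypothesis that the left legs of $\Delta(P)$ lie in $\cN_P$. The only difference is cosmetic: where you verify $\Delta(\cN_P)\subseteq\cN_P\otimes\cN_P$ by a direct coassociativity computation, the paper instead checks $S^{\pm 1}(\cN_P)\subseteq\cN_P$ and invokes the standard fact that a left coideal subalgebra preserved by the antipode is automatically a Hopf subalgebra.
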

\begin{proof}
 The leftward implication is obvious.  
  
  Suppose conversely that $\Delta(P)\in \cN_P\otimes\cN_P$. Then clearly \[\Delta(P)(P\otimes\I) = P\otimes P=(P\otimes\I)\Delta(P) \] i.e. $P$ is two sided.  Using \Cref{coipr} we get  $\cN_P = \{(\mu\otimes\id)(\Delta(P)):\mu\in\mathcal{A}^*\}$ and thus $S(\cN_P )= \{(\id\otimes\mu)(\Delta(P)):\mu\in\mathcal{A}^*\}\subset \cN_P$; similarly $S^{-1}(\cN_P) =   \{(\id\otimes\mu)(\Delta(P)):\mu\in\mathcal{A}^*\}\subset \cN_P$ and both containments  together yield  the equality $\cN_P = S(\cN_P)$. We conclude by noting that a left coideal subalgebra preserved by $S$ must be a Hopf algebra.
\end{proof}

\begin{proposition} \label{pr.p-cntr}
Let $P\in\cA$ be central group-like projection. Then $\cN_P$ is a normal left coideal subalgebra and $P$ is a two sided counital integral in $\cN_P$. Conversely, let $\cN$ be a normal left  coideal with a counital two sided  integral $P\in\cN$. Then $P$ is central in $\cA$ and $\cN = \cN_P$
\end{proposition}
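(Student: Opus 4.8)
The plan is to prove the two implications separately, in each case leaning on the structural results already established.

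\emph{Forward direction.} Assume $P$ is a central two-sided group-like projection. The first thing I would record is that centrality collapses the two conditions defining $\cN_P$: for any $x$ we have $(\I\otimes P)\Delta(x)=x_{(1)}\otimes Px_{(2)}=x_{(1)}\otimes x_{(2)}P=\Delta(x)(\I\otimes P)$, so $\cN_P=\cN_P^r=\{x:\Delta(x)(\I\otimes P)=x\otimes P\}$. To prove normality I would fix $x\in\cN_P$ and $a\in\cA$, put $z=a_{(1)}xS(a_{(2)})$, and verify $\Delta(z)(\I\otimes P)=z\otimes P$. The mechanism is that $\Delta(z)=\Phi(\Delta(x))$ for the linear map $\Phi(u\otimes v)=a_{(1)}uS(a_{(4)})\otimes a_{(2)}vS(a_{(3)})$; sliding $P$ past $S(a_{(3)})$ by centrality rewrites $\Delta(z)(\I\otimes P)=\Phi\big(\Delta(x)(\I\otimes P)\big)=\Phi(x\otimes P)$, after which the antipode axiom collapses the inner legs $a_{(2)}S(a_{(3)})$ to a counit and returns $z\otimes P$. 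Finally $P$ is a counital two-sided integral in $\cN_P$ for free: $P\in\cN_P$, $\varepsilon(P)=1$ by \Cref{Corr1}, and $xP=\varepsilon(x)P=Px$ for $x\in\cN_P$ is \Cref{eq:int1}.

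\emph{Converse, the cheap part.} Now let $\cN$ be a normal left coideal subalgebra with counital two-sided integral $P$. Applying the integral identity to $x=P$ (with $\varepsilon(P)=1$) gives $P^2=P$, so $P$ is a projection. Since $\cN$ is a left coideal, $\Delta(P)=P_{(1)}\otimes P_{(2)}$ with $P_{(2)}\in\cN$, and the integral identities $P_{(2)}P=\varepsilon(P_{(2)})P$, $PP_{(2)}=\varepsilon(P_{(2)})P$ turn both $\Delta(P)(\I\otimes P)$ and $(\I\otimes P)\Delta(P)$ into $P\otimes P$; hence $P$ is a right group-like projection and $\cN_P$ is defined. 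The same coideal property gives $\cV_P\subseteq\cN$, while feeding the integral identity into $\Delta(x)$ for $x\in\cN$ gives $\cN\subseteq\cN_P$; so I have the sandwich $\cV_P\subseteq\cN\subseteq\cN_P$.

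\emph{Converse, the main obstacle: centrality.} This is where normality must be used, and I expect it to be the hardest step. The target is $a_{(1)}PS(a_{(2)})=\varepsilon(a)P$ for all $a$, which is equivalent to $P$ being central. Normality makes $\cN$ a module algebra under the adjoint action, so $\mathrm{Ad}(a)(P)=a_{(1)}PS(a_{(2)})\in\cN$. I would show this element is a \emph{left integral} of $\cN$ by computing $\mathrm{Ad}(a_{(2)})\big[\big(\mathrm{Ad}(S^{-1}(a_{(1)}))(n)\big)\cdot P\big]$ in two ways for $n\in\cN$: the inner factor lies in $\cN$ and is annihilated against $P$ by the left-integral property, so the expression vanishes, while expanding via the module-algebra axiom and collapsing $a_{(2)}S^{-1}(a_{(1)})$ through $\sum a_{(2)}S^{-1}(a_{(1)})=\varepsilon(a)\I$ reproduces $n\cdot\mathrm{Ad}(a)(P)$; this yields $n\cdot\mathrm{Ad}(a)(P)=\varepsilon(n)\mathrm{Ad}(a)(P)$. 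Because $\I-P\in\ker(\varepsilon|_{\cN})$ and $P$ is a right integral, the left integrals of $\cN$ are exactly $\kk P$, so $\mathrm{Ad}(a)(P)\in\kk P$; evaluating $\varepsilon$ fixes the scalar at $\varepsilon(a)$, and then $Pa=\sum a_{(1)}PS(a_{(2)})a_{(3)}=aP$ proves centrality. The invertibility of the antipode, standing throughout the paper, is precisely what makes this module-algebra manipulation legitimate.

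\emph{Converse, conclusion.} Once $P$ is central it commutes with $S(P)$, so by \Cref{th.summary} the right group-like projection $P$ is two-sided; then \Cref{coipr} gives $\cV_P=\cN_P$. Combined with the sandwich $\cV_P\subseteq\cN\subseteq\cN_P$ this forces $\cN=\cN_P$, finishing the proof.
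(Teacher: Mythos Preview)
Your forward direction is the same computation as the paper's, and your ``cheap part'' of the converse (that $P$ is idempotent, right group-like, and $\cV_P\subseteq\cN\subseteq\cN_P$) coincides with the paper's concluding lines.

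Your proof of centrality, however, follows a genuinely different route. The paper works with the operators $\ww_{lr}$: from the integral identities it verifies $\ww_{lr}(\I\otimes P)\ww_{lr}^{-1}(\I\otimes P)=\I\otimes P$ and its companion, then unpacks these to obtain $PS(a)P=S(a)P$ and $PaP=Pa$ directly, whence $Pa=PaP=PS(S^{-1}(a))P=S(S^{-1}(a))P=aP$. You instead exploit that $\cN$ is an $\cA$-module algebra under $\mathrm{Ad}$: since $\mathrm{Ad}(S^{-1}(a_{(1)}))(n)\in\cN$, the left-integral property of $P$ and the module-algebra axiom (together with $a_{(2)}S^{-1}(a_{(1)})=\varepsilon(a)\I$) give $n\cdot\mathrm{Ad}(a)(P)=\varepsilon(n)\,\mathrm{Ad}(a)(P)$; then uniqueness of left integrals in $\cN$ (from $P\Lambda=\varepsilon(P)\Lambda=\Lambda$ and $P\Lambda=\varepsilon(\Lambda)P$) forces $\mathrm{Ad}(a)(P)=\varepsilon(a)P$, and centrality follows. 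This is correct; the phrase ``the expression vanishes'' is a slip (the expression equals $\varepsilon(n)\,\mathrm{Ad}(a)(P)$, not zero), but your stated conclusion is right. Your argument is more conceptual and transparently isolates where normality and the two-sidedness of the integral enter, at the cost of invoking the uniqueness-of-integrals trick; the paper's $\ww$-operator computation is more hands-on and self-contained but somewhat opaque about which hypothesis is being used at each step.
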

\begin{proof}
  Suppose that $P$ is a central right group-like projection and $x\in\cN_P$. Then for every $a\in\mathcal{A}$ we have 
\begin{align*}\Delta(a_{(1)}xS(a_{(2)}))(\I\otimes P) &= a_{(1)}x_{(1)}S(a_{(4)})\otimes a_{(2)}x_{(2)}S(a_{(3)})P\\&= a_{(1)}x_{(1)}S(a_{(4)})\otimes a_{(2)}x_{(2)}PS(a_{(3)})\\&=a_{(1)}xS(a_{(4)})\otimes a_{(2)}PS(a_{(3)}) = a_{(1)}xS(a_{(2)})\otimes P\end{align*} and we see that $\cN_P$ is preserved by the adjoint action. Clearly $P$ is an integral for $\cN_P$. 

Conversely, suppose that $\cN$ is a normal coideal and $P\in\cN$ is an integral. Then for every $a\in\mathcal{A}$ we have \begin{equation}\label{aux1}a_{(1)}PS(a_{(2)})P = \varepsilon(a_{(1)}PS(a_{(2)}))P = \varepsilon(a)P\end{equation} and   similarly we get \begin{equation}\label{aux2}Pa_{(1)}PS(a_{(2)}) = \varepsilon(a)P.\end{equation} Viewing $P$ as a left multiplication operator on $\mathcal{A}$ we have
\begin{align*}\ww_{lr}(\I\otimes P)\ww_{lr}^{-1}(\I\otimes P) (a\otimes b) &= \ww_{lr} a_{(1)}\otimes PS(a_{2})Pb\\& = a_{(1)}\otimes a_{(2)}PS(a_{3})Pb\\& = a\otimes Pb\end{align*} 
where in the third equality we used \eqref{aux1}.
This shows that  $\ww_{lr}(\I\otimes P)\ww_{lr}^{-1}(\I\otimes P) = \I\otimes P$. Similarly we get $(\I\otimes P)\ww_{lr}(\I\otimes P)\ww_{lr}^{-1} = (\I\otimes P)$. Using  $(\I\otimes P)\ww_{lr}^{-1}(\I\otimes P) = \ww_{lr}^{-1}(\I\otimes P)$ we see that 
\[a_{1}\otimes PS(a_{(2)})Pb =a_{1}\otimes  S(a_{(2)})Pb  \] for every $a,b\in\mathcal{A}$. Applying $\varepsilon\otimes\id$ and putting $b=\I$  we get $PS(a)P = S(a)P$ for every $a\in\mathcal{A}$. Similar reasoning starting with  $(\I\otimes P)\ww_{lr}(\I\otimes P)\ww_{lr}^{-1} = (\I\otimes P)$ yields  $PaP = Pa$.  Thus $Pa = PaP = PS(S^{-1}(a))P = S(S^{-1}(a))P = aP$ for all $a\in\mathcal{A}$ and we see that  $P$ is central. Using \Cref{th.summary} we conclude that  $P$ is a two-sided group-like projection and using  \Cref{coipr} for the equality below we have  \[\cV_P\subseteq \cN\subseteq\cN_P=\cV_P\] and  $\cN = \cN_P$. 
\end{proof}

Central group-like projections for locally compact quantum groups were considered in \cite{KKS} in relations with open quantum subgroups. In our case we have:
\begin{corollary}
Let $P\in\mathcal{A}$ be a right group-like projection such that $\cN_P$ is normal. Then there is a Hopf algebra $\mathcal{B}$ and a Hopf surjection $\pi:\mathcal{A}\to\mathcal{B}$ such that $\cN_P = \{x\in\mathcal{A}:(\id\otimes\pi)\Delta(a) = a\otimes \I\}$.
\end{corollary}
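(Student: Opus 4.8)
The plan is to reduce the statement to the correspondence recalled in \Cref{se.pre}: when $\cA$ is left faithfully flat over a \emph{normal} left coideal subalgebra $\cN$, one has $\cN=\cN_\pi$ for a Hopf surjection $\pi:\cA\to\cB$, where $\cN_\pi=\{x\in\cA:(\id\otimes\pi)(\Delta(x))=x\otimes\I_\cB\}$. Since this last description is exactly the one in the corollary, it suffices to (i) record the structural consequences that normality of $\cN_P$ imposes on $P$, and (ii) verify the faithful-flatness hypothesis for $\cN=\cN_P$.

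First I would note that $P$ is a counital two-sided integral for $\cN_P$: applying $\varepsilon\otimes\id$ to $\Delta(P)(\I\otimes P)=P\otimes P$ gives $\varepsilon(P)=1$, and $xP=\varepsilon(x)P=Px$ for all $x\in\cN_P$ by \Cref{pr.min-cntr} (this is \Cref{eq:int1}). As $\cN_P$ is assumed normal, the converse half of \Cref{pr.p-cntr} applies verbatim and shows that $P$ is central in $\cA$. A central projection trivially satisfies $PS(P)=S(P)P$, so by \Cref{th.summary} $P$ is in fact a two-sided group-like projection.

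Now the semisimplicity machinery takes over. Since $P$ is two-sided, \Cref{th.np-ss} guarantees that $\cN_P$ is semisimple, and by \Cref{cor.fd} it is finite-dimensional. This is precisely what makes faithful flatness automatic: view $\cA$ as a left $\cN_P$-module by multiplication, so that the unital inclusion $\cN_P=\cN_P\cdot\I\hookrightarrow\cA$ is a map of left $\cN_P$-modules. Over the semisimple algebra $\cN_P$ every submodule is a direct summand, hence this inclusion splits and the left regular module of $\cN_P$ is a direct summand of $\cA$. All $\cN_P$-modules being flat, and $\cA$ containing the regular module (a generator of the module category) as a summand, $\cA$ is faithfully flat as a left $\cN_P$-module.

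With normality in hand from the second step and faithful flatness from the third, the quoted correspondence yields a Hopf algebra $\cB$ and a Hopf surjection $\pi:\cA\to\cB$ with $\cN_P=\cN_\pi=\{x\in\cA:(\id\otimes\pi)(\Delta(x))=x\otimes\I_\cB\}$, which is the asserted conclusion. The only genuine content beyond bookkeeping is the faithful-flatness step, and I expect that to be the main obstacle in a naive approach; the decisive simplification here is that the semisimplicity of $\cN_P$ (forced by $P$ being two-sided) reduces faithful flatness to the splitting of $\cN_P\hookrightarrow\cA$, so that one never needs to invoke a general, and considerably more delicate, faithful-flatness theorem for finite-dimensional coideal subalgebras.
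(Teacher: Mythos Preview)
Your proof is correct, but the paper takes a more direct and constructive route. After establishing (as you do, via \Cref{pr.p-cntr}) that $P$ is central and hence two-sided with $S(P)=P$, the paper simply \emph{builds} the Hopf algebra by hand: it sets $\cB=\cA P$ with unit $P$, comultiplication $\Delta_\cB(b)=\Delta_\cA(b)(P\otimes P)$, antipode $S_\cB=S_\cA|_\cB$, and defines $\pi(a)=aP$. For this $\pi$ one has $\cN_\pi=\{x:\Delta(x)(\I\otimes P)=x\otimes P\}=\cN_P^r$, and since $P$ is two-sided \Cref{coipr} and \Cref{cor1} give $\cN_P^r=\cN_P$. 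Your approach instead passes through the semisimplicity of $\cN_P$ (\Cref{th.np-ss}) to secure faithful flatness and then invokes the abstract normal-coideal/Hopf-quotient correspondence recorded in \Cref{se.pre}. The paper's argument is self-contained and exhibits $\cB$ and $\pi$ explicitly, bypassing both the semisimplicity step and the black-box general correspondence; your route situates the result within the general theory and yields uniqueness of $\ker\pi$ for free. Note that the faithful-flatness observation you make is essentially the content of \Cref{fflat1}.
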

\begin{proof}
Centrality of $P$ and  \Cref{th.summary} yields $S(P) = P$. We conclude by defining $\mathcal{B} = \mathcal{A}P$,  $\Delta_{\mathcal{B}}(b) = \Delta_{\mathcal{A}}(b) (P\otimes P)$, $S_\mathcal{B} = S_\mathcal{A}|_{\mathcal{B}}$ and $\pi(a) = aP$ for all $a\in\mathcal{A}$ and $b\in\mathcal{B}$.
\end{proof}

Let us recall that $\cN_P$ is a left coideal subalgebra containing a projection $P$ satisfying $Px = \varepsilon(x)P = xP$ for all $x\in\cN_P$. In the next proposition we get a   converse to this fact for two sided group-like projections, which is the Hopf algebraic analog of \cite[Theorem 4.1.]{Kasp_id_st}.
 \begin{proposition}\label{square}
Let $\cN\subset \mathcal{A}$ be a coideal subalgebra admitting a non-zero projection $Q\in\cN$  and a homomorphism $\mu:\cN\to\mathbbm{k}$ such that $Qx = xQ = \mu(x)Q$ for every $x\in\cN$.  If $S^2(Q) = Q$ then there exists a two sided group-like projection such that $\cN=\cN_P$ and we have $\cN = \{(\nu\otimes \id)(\Delta(Q)):\nu\in\mathcal{A}^*\}$. Moreover $Q$ is a left shift of $P$. 
\end{proposition}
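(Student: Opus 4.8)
The plan is to manufacture the projection $P$ out of the shift machinery of the preceding subsection and then to identify $\cN$ with $\cN_P$ by recognising $P$ as a counital two-sided integral for $\cN$. Throughout write $\cV_Q:=\{(\nu\otimes\id)(\Delta(Q)):\nu\in\cA^*\}$ for the smallest left coideal containing $Q$.

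First I would produce a right group-like projection from $Q$. Set $P_0:=(\id\otimes\mu)(\Delta(Q))=Q_{(1)}\mu(Q_{(2)})$; this is legitimate because $\cN$ is a left coideal, so $\Delta(Q)\in\cA\otimes\cN$ and $\mu$ can be applied to the second leg. Using $Qx=xQ=\mu(x)Q$ for $x\in\cN$ together with $Q_{(2)}\in\cN$, a one-line computation gives $(\I\otimes Q)\Delta(Q)=P_0\otimes Q=\Delta(Q)(\I\otimes Q)$, so that $Q$ is a right shift of $P_0$ in the sense of \Cref{shifts_def}; by \Cref{basic_lem}, $P_0$ is a right group-like projection, and $S^2(P_0)=P_0$ since $S^2(Q)=Q$. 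Feeding $S^2(Q)=Q$ into \Cref{lrshift} then yields at once that $P_0$ is two-sided and that there is a two-sided group-like projection $P$ of which $Q$ is a left shift. This $P$ is the projection asserted in the statement, and the clause ``$Q$ is a left shift of $P$'' is precisely what \Cref{lrshift} delivers.

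Next I would assemble the chain $\cV_P=\cV_Q=\cN_P\subseteq\cN$, together with $\I\in\cV_Q$. Three points do this. (i) Writing the left-shift identity $\Delta(Q)(Q\otimes\I)=Q\otimes P$ as $Q_{(1)}Q\otimes Q_{(2)}=Q\otimes P$ and applying a functional $\omega$ with $\omega(Q)\ne 0$ to the first leg expresses $P$ as a combination of the elements $Q_{(2)}\in\cN$; hence $P\in\cN$, and the same formula exhibits $P\in\cV_Q$. (ii) As $P$ is two-sided, \Cref{coipr} gives $\cV_P=\cN_P$; since $P\in\cN$ and $\cN$ is a left coideal, $\cV_P\subseteq\cN$, whence $\cN_P\subseteq\cN$. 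Moreover $Q$ is a left shift of $P$, so the left-hand analogue of \Cref{lemma_shifts} puts $Q=(\nu_1\otimes\id)(\Delta(P))\in\cV_P=\cN_P$, and then $\cV_Q\subseteq\cN_P$ because $\cN_P$ is a left coideal. (iii) Conversely $P\in\cV_Q$ and $\cV_Q$ is a left coideal, so the minimality defining $\cV_P$ gives $\cV_P\subseteq\cV_Q$. Combining, $\cV_P=\cV_Q=\cN_P$, and in particular $\I\in\cV_Q$ because $\I=S(P_{(1)})P_{(2)}\in\cV_P$ for two-sided $P$ (last part of \Cref{coipr}).

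It remains to prove $\cN\subseteq\cN_P$, and this is where the real work lies. By the maximality property established just after \Cref{def.integral} (that $\cN_P$ is the largest left coideal subalgebra for which $P$ is a counital two-sided integral), it suffices to show $xP=\varepsilon(x)P=Px$ for all $x\in\cN$; equivalently, since $\I\in\cN_P$, that $\cN_P$ is a one-sided ideal of $\cN$ (for then $\cN=\cN\I\subseteq\cN\,\cN_P\subseteq\cN_P$). I expect this to be the main obstacle: any manipulation using only the relation $xQ=\mu(x)Q$ and the first-order shift identities turns out tautological, precisely because those identities define $P$ through $Q$, so the decisive input must be $S^2(Q)=Q$ acting through the antipode, exactly as in the proofs of \Cref{basic_lem} and \Cref{lrshift}. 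Concretely I would run a Sweedler computation on $\Delta(x)(\I\otimes P)$ for $x\in\cN$, tensoring with $Q$ and using coassociativity to bring a factor $Q$ adjacent to $P$, then replacing $Q\otimes P$ by $(Q\otimes\I)\Delta(Q)$, collapsing the resulting $\cN$-components through $Q_{(2)}x=\mu(\cdot)Q$-type relations, and finally applying $S^{-1}\otimes S^{-1}$ with $S^2(Q)=Q$ and $S(P)=P$ to recover $x\otimes P$; a convenient auxiliary fact is that, when $P\ne Q$, $P$ and $Q$ are orthogonal central idempotents of $\cN_P$, so $PQ=0$ and any element of $\kk Q$ killed on the left by $P$ must vanish. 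A softer alternative is to invoke the finite dimensionality of $\cN$ from \cite[Theorem 3.7]{Kasp_coint} --- available because $Q$ is a non-zero generalised integral for $\cN$ --- and reduce to showing that $Q$ generates $\cN$ as a left $\cA$-comodule, forcing $\cN=\cV_Q=\cN_P$. Once $\cN=\cN_P$ is in hand, the equality $\cN=\cV_Q$ is immediate from step (iii), while the degenerate case $\mu=\varepsilon$ (where $P_0=P=Q$ and $Q$ is already a counital two-sided integral of $\cN$) follows directly from the same maximality statement.
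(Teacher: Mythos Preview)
Your setup and the chain $\cV_P=\cV_Q=\cN_P\subseteq\cN$ are correct and match the paper. The gap is exactly where you locate it, at $\cN\subseteq\cN_P$, and neither of your proposed remedies closes it. The Sweedler manipulation you sketch is not carried out, and there is no evident reason it should succeed: the hypothesis $xQ=\mu(x)Q$ controls products with $Q$, not with $P$, and $P$ was built on the opposite side of $Q$ via $\Delta(Q)(Q\otimes\I)$. Your ``softer alternative'' is circular: the phrase ``reduce to showing that $Q$ generates $\cN$ as a left $\cA$-comodule'' is literally the statement $\cN=\cV_Q$, which is equivalent to what you are trying to prove, and finite-dimensionality alone does not deliver it.

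The move you are missing is to work with your $P_0$ rather than with $P$, and to \emph{transport} $\cN$ rather than compute inside it. Set $\pi:=(\id\otimes\mu)\circ\Delta:\cN\to\cA$, so that $\pi(Q)=P_0$ and $\Delta(a)(\I\otimes Q)=\pi(a)\otimes Q$ for all $a\in\cN$. Applying $\ww_{lr}^{-1}$ to the last identity shows $\pi$ is injective, and $\Delta\circ\pi=(\id\otimes\pi)\circ\Delta$ shows $\pi(\cN)$ is again a left coideal subalgebra. The key is that $\pi$ converts the $\mu$-integral $Q$ into a \emph{counital} integral $P_0$ for $\pi(\cN)$: from $\pi(a_{(2)})\pi(Q)=\pi(a_{(2)}Q)=\mu(a_{(2)})\pi(Q)$ one gets $\Delta(\pi(a))(\I\otimes P_0)=\pi(a)\otimes P_0$ (and similarly on the other side), so $\pi(\cN)\subseteq\cN_{P_0}$ holds by definition. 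Since $P_0$ is two-sided, $\cN_{P_0}=\cV_{P_0}=\pi(\cV_Q)\subseteq\pi(\cN)$; hence $\pi(\cN)=\pi(\cV_Q)$ and, by injectivity, $\cN=\cV_Q$. Combined with your $\cV_Q=\cN_P$ this finishes the proof. In short: passing through $\pi$ turns $\mu$ into $\varepsilon$, making the inclusion into $\cN_{P_0}$ automatic; attempting $\cN\subseteq\cN_P$ directly forfeits exactly this leverage.
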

\begin{proof} We denote $\pi = (\id\otimes\mu)\circ\Delta :\cN\to\mathcal{A}$. Note that  
 $\Delta(a)(\I\otimes Q) = \pi(a)\otimes Q$. It is easy to check that $\pi$ is an injective algebra homomorphism such that $\Delta\circ\pi = (\id\otimes\pi)\circ\Delta$. In particular $\pi(\cN)$ is a left coideal subalgebra and it contains a two sided group-like  projection $\pi(Q)$  (see \Cref{basic_lem}), thus $
 \cV_{\pi(Q)}\subset \pi(\cN)$. 
 Moreover for every $b\in\pi(\cN)$ there exists $c\in\mathcal{A}$ such that  $\Delta(b)(\I\otimes\pi(Q)) = c\otimes \pi(Q)$ and since $\varepsilon(\pi(Q))=\I$ we get $b=c$. This shows that $\pi(\cN)\subset \cN_{\pi(Q)}$. Since $\pi(Q)$ is two sided  we have $\cN_{\pi(Q)} =\cV_{\pi(Q)} = \pi(\cN)$ and thus $\cN = \{(\mu\otimes \id)(\Delta(Q)):\mu\in\mathcal{A}^*\}$. 
Using  $\Delta(Q)(\I\otimes Q) = \pi(Q)\otimes Q$ and  \Cref{lrshift} we can find 
a two sided group-like projection $P$ such that $\Delta(Q)(Q\otimes\I) = Q\otimes P$. In particular $P\in\cN$ and thus $\cN_P\subset \cN$. On the other hand since $Q\otimes Q = S^{-1}(P_{(1)})Q\otimes P_{(2)}$ we also get $Q\in\cN_P$ which implies that $\cN\subset \cN_P$ and we are done. 
\end{proof}

\begin{remark}\label{fflat1}
Using \cite{skr-proj} we conclude that if   $\cA$ is weakly finite Hopf algebra $\cA$ then $\cA$ is    $\cN_P$ -faithfully flat for every right group-like projection $P\in\cA$ and in particular  \begin{equation}\label{fflat}\cN_P = \{x\in\cA:(\id\otimes\pi)(\Delta(x)) = x\otimes\pi(\I)\}\end{equation} where $\pi:\cA\to\cC_{\cN_P}$ is the quotient map (see the first paragraph of \Cref{se.pre} for more explanations).  

 $\cN_P$-faithful  flatness is also guarantied if $\cN_P$  is semisimple, which is the case if e.g. $P$ is a two sided group-like projection (see \Cref{th.np-ss}).
But in general we do not know if $\cA$ is $\cN_P$ -faithfully flat,  and we do not know if  \Cref{fflat} holds.  

In order to relate in \Cref{fflat_prop} the right hand side of \Cref{fflat} to $\cN_P$  in possibly non $\cN_P$-faithfully flat case, we shall still denote  $\pi:\cA\to\cC_{\cN_P}$ and we denote  \[\cN_\pi = \{x\in\cA:(\id\otimes\pi)(\Delta(x)) = x\otimes\pi(\I)\}.\] Consider also the left $\cA$ module $\cA P$. Equipping  $\cA P$ with the coalgebra structure $xP\mapsto \Delta(xP)(P\otimes P)$ we see that $\cA P$ has a structure of a left module quotient coalgebra.

\end{remark}

\begin{proposition}\label{fflat_prop} Let $P\in\cA$ be a right group-like projection, $\cN_P$ the left coideal subalgebra assigned to $P$ and $\cC_{\cN_P}$, $\cA P$ the module quotient coalgebras described in \Cref{fflat1}. Let $\pi:\cA\to \cC_{\cN_P}$ be the canonical quotient maps. Then $\pi|_{\cA P}$ identifies $\cA P$ with $\cC_{\cN_P}$    and we have $\cN_{\pi} = \cN_P^r$. In particular if $\cN_P = \cV_P$ then $\cN_P = \cN_{\pi}$. 
\end{proposition}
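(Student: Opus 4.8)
The plan is to build an explicit isomorphism $\Phi\colon\cA P\to\cC_{\cN_P}$ by restricting the canonical projection $\pi$, to transport the defining relation of $\cN_\pi$ across this isomorphism, and then to read off the last claim from \Cref{cor1}. Throughout write $\cN_P^-=\cN_P\cap\ker\varepsilon$, so that $\cC_{\cN_P}=\cA/(\cA\cN_P^-)$.

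First I would record the two identities that do all the work. Since $\varepsilon(P)=1$ and $P\in\cN_P$, the element $\I-P$ lies in $\cN_P^-$, so $a(\I-P)\in\cA\cN_P^-$ for every $a\in\cA$, giving $\pi(aP)=\pi(a)$. Second, by \Cref{pr.min-cntr} every $x\in\cN_P^-$ satisfies $xP=\varepsilon(x)P=0$, so $(\cA\cN_P^-)P=0$. Now set $\Phi=\pi|_{\cA P}$. Surjectivity is immediate from $\pi(a)=\pi(aP)=\Phi(aP)$. For injectivity, if $\Phi(aP)=0$ then $aP\in\cA\cN_P^-$, and right-multiplying by $P$ and using $P^2=P$ together with $(\cA\cN_P^-)P=0$ forces $aP=(aP)P=0$. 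Left $\cA$-linearity is the identity $\pi(a'aP)=a'\pi(a)$, and compatibility with the coalgebra structure $xP\mapsto\Delta(xP)(P\otimes P)$ on $\cA P$ is a one-line Sweedler check: applying $\Phi\otimes\Phi$ and using $\Phi(bP)=\pi(b)$ turns $\Delta(xP)(P\otimes P)$ into $(\pi\otimes\pi)(\Delta(xP))=\Delta_{\cC_{\cN_P}}(\pi(x))=\Delta_{\cC_{\cN_P}}(\Phi(xP))$. Thus $\Phi$ identifies $\cA P$ with $\cC_{\cN_P}$ as left module coalgebra quotients, and in particular $\Phi(P)=\pi(\I)$, equivalently $\Phi^{-1}(\pi(\I))=P$.

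Next I would compute $\cN_\pi$ by pushing its defining equation through the bijection $\id_\cA\otimes\Phi^{-1}\colon\cA\otimes\cC_{\cN_P}\to\cA\otimes\cA P$. Since $\Phi^{-1}(\pi(a))=aP$, the left-hand side $(\id\otimes\pi)(\Delta(x))=x_{(1)}\otimes\pi(x_{(2)})$ is carried to $x_{(1)}\otimes x_{(2)}P=\Delta(x)(\I\otimes P)$, while the right-hand side $x\otimes\pi(\I)$ is carried to $x\otimes P$. As $\id\otimes\Phi^{-1}$ is injective, the condition defining $\cN_\pi$ is equivalent to $\Delta(x)(\I\otimes P)=x\otimes P$, which is exactly the defining condition of $\cN_P^r$; hence $\cN_\pi=\cN_P^r$. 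The last assertion is then immediate: if $\cN_P=\cV_P$ then $\I\in\cV_P$ (as $\I\in\cN_P$), so \Cref{cor1} yields $\cV_P=\cN_P^{-lr}$ and in particular $\cN_P^r=\cN_P$; combined with $\cN_\pi=\cN_P^r$ this gives $\cN_\pi=\cN_P$.

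The only step carrying genuine content is the injectivity of $\Phi$, i.e. that the left ideal $\cA\cN_P^-$ meets $\cA P$ in $0$; everything else is formal transport of structure. I expect this to go through cleanly from the pair $\cN_P^-P=0$ and $P^2=P$, so I anticipate no serious obstacle. It is worth emphasising that this argument never invokes $\cN_P$-faithful flatness of $\cA$, which is exactly the point, since \Cref{fflat1} records that such flatness is not known in general and that \Cref{fflat} may therefore fail.
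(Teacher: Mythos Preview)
Your proof is correct and follows essentially the same route as the paper: both identify $\cA P$ with $\cC_{\cN_P}$ via $\pi|_{\cA P}$ using the two facts $\I-P\in\cN_P^-$ (surjectivity) and $\cA\cN_P^-P=0$ (injectivity), then read off $\cN_\pi=\cN_P^r$ directly from the description of the quotient map as $x\mapsto xP$, and finally invoke \Cref{cor1} for the last claim. Your presentation is a touch more explicit about the coalgebra-map verification and the transport along $\Phi^{-1}$, but the content is identical.
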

\begin{proof}
Since $\I-P\in\cN_P^-$ we have $\pi(\I) =\pi(P)$. In particular, if $x\in\cA$ then $\pi(x) = x\pi(\I) = x\pi(P) = \pi(xP)$ and thus $\pi|_{\cA P}$ is onto. If in turn $\pi(xP) = 0$ then $0 = x\pi(P) = x\pi(\I) = \pi(x)$ and hence $x\in\cA\cN_P^{-}$. In particular $xP\in \cA\cN_P^{-}P= \{0\}$ and we see that $\pi|_{\cA P}$ is isomorphism of left $\cA$ modules. Finally, the equality  $\Delta(xP)(P\otimes P) = \Delta(x)(P\otimes P)$  shows that $\pi|_{\cA P}$ identifies $\cA P$ and $\cC_{\cN_P}$ as module quotient coalgebras. Under this identification the quotient map becomes $\pi(x) = xP$ which immediately implies that \[\cN_\pi = \{x\in\cA:\Delta(x)(\I\otimes P) = x\otimes P\}=  \cN_P^r.\] Using \Cref{cor1} we see  that $\cN_P = \cN_\pi$ if $\cN_P = \cV_P$. 
\end{proof}
\begin{remark}\label{weak_proj}
Let $P\in\cA$ be a projection in $\cA$ satisfying $\Delta(P)(P\otimes P) = P\otimes P$ and consider the left $\cA$ module $\cC_P=\cA P$. Let us define  $\Delta_{\cC_P}:\cC_P\to\cC_P\otimes\cC_P$ by $\Delta_{\cC_P}(x) = \Delta(x)(P\otimes P)$. It is easy to see that that $\Delta_{\cC_P}$  is coassociative. Moreover if $\varepsilon(P) = 1$ then $(\cC_P,\Delta_{\cC_P},\varepsilon|_{\cC_P})$ together with the map $\pi:\cA\to\cC$ given by $\pi(x) = xP$ is a left module quotient coalgebra. Clearly $\cN_\pi = \{x\in\cA:\Delta(x)(\I\otimes P) = x\otimes P\}$ thus  $P\in\cN_\pi$ if and only $\Delta(P)(\I\otimes P) = P\otimes P$. Moreover if  $P\in\cN_\pi$ then  $xP = \varepsilon(x)P$ for all $x\in\cN_\pi$.  

In what follows we shall show that $\cC_P$ can be identified with $\cC_{\cN_\pi}$ if $P\in\cA$ is a projection such that $\Delta(P)(\I\otimes P) = P\otimes P$. Clearly such a projection satisfies $\Delta(P)(P\otimes P) = P\otimes P$ and $\varepsilon(P) = 1$ and it makes sense to consider  a left module quotient coalgebra $\cC_P$ in the first place. In order to identify $\cC_P$ and $\cC_{\cN_\pi}$ we must show that  $\cA\cN_\pi^- = \cA(\I-P)$. The containment $\cA(\I-P)\subset \cA\cN_\pi^-$ is clear. For the converse containment we have $\cA\cN_\pi^- = \cA\cN_\pi^-(\I-P)\subset\cA(\I-P)$. 
\end{remark}
\begin{definition}\label{def_weak}
A projection  $P\in\cA$ satisfying and $\varepsilon(P) = \I$ is called
 \begin{itemize}
     \item  a right sided weak group-like projection if $\Delta(P)(P\otimes P) = P\otimes P$;
     \item a left sided weak group-like projection if $(P\otimes P)\Delta(P) = P\otimes P$;
     \item a weak group-like projection if $\Delta(P)(P\otimes P) =(P\otimes P) \Delta(P) = P\otimes P$. 
 \end{itemize} 
\end{definition}
Clearly a right (left) group-like projection is also a weak group-like projection. In \Cref{weak_sweedl} we shall see that the number of weak group-like projections in Sweedler's Hopf algebra is substantially larger the the number of right (left) group-like projections. 

Motivated by  \cite[Theorem 4.5]{KaspSol} we prove the next result.
\begin{proposition}
  Let $P\in\cA$ be a right sided weak group-like projection  $\Delta(P)(P\otimes P) = P\otimes P$ and $S(P) = P$. Then $P$ is a group-like projection.
\end{proposition}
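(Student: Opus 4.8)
The plan is first to reduce the assertion to a single comultiplication identity, and then to produce it. The hypotheses give $P^2=P$, $\varepsilon(P)=1$ and $S(P)=P$, hence also $S^2(P)=P$. I would then apply $\flip\circ(S\otimes S)$ — an anti-automorphism of $\cA\otimes\cA$ which fixes $P\otimes P$ and, since $(S\otimes S)\Delta(P)=\Delta^{\mathrm{op}}(P)$ gives $\flip(S\otimes S)\Delta(P)=\flip\Delta^{\mathrm{op}}(P)=\Delta(P)$, also fixes $\Delta(P)$ — to the assumed identity $\Delta(P)(P\otimes P)=P\otimes P$. Because this map reverses products, it turns that identity into the companion one $(P\otimes P)\Delta(P)=P\otimes P$, so that $\Delta(P)(P\otimes P)=(P\otimes P)\Delta(P)=P\otimes P$, i.e. $P\otimes P$ lies below the idempotent $\Delta(P)$ in $\cA\otimes\cA$. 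By \Cref{Corr1} it now suffices to prove the one identity $\Delta(P)(\I\otimes P)=\Delta(P)(P\otimes\I)$ (equivalently, by \Cref{abprop} with $a=b=P$, that $(P\otimes\I)(\id\otimes S)\Delta(P)=P\otimes P$), since any projection satisfying it is already two sided.

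The difficulty is visible at this point. Multiplying $\Delta(P)(\I\otimes P)$ on the right by $P\otimes\I$ and invoking the weak relation gives $\Delta(P)(\I\otimes P)(P\otimes\I)=\Delta(P)(P\otimes P)=P\otimes P$, so the discrepancy $F:=\Delta(P)(\I\otimes P)-P\otimes P$ satisfies $F(P\otimes\I)=0$ while also $F(\I\otimes P)=F$. The whole content of the proposition is the cancellation of the non-invertible factor $P\otimes\I$, i.e. the passage from $F(P\otimes\I)=0$ to $F=0$. I do not expect this to come out of formal Sweedler or antipode bookkeeping: contracting the weak identity by $\varepsilon$, $m$, $S$ or $\Delta$ only reproduces consequences it already has, and under the standing hypotheses all the reformulations above are equivalent to the conclusion itself. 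This cancellation is the step I expect to be the main obstacle.

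To effect it I would pass to the module picture of \Cref{weak_proj}: the weak relation makes $\cC_P=\cA P$ a left module quotient coalgebra with $\pi(x)=xP$, whose coideal $\cN=\{x\in\cA:\Delta(x)(\I\otimes P)=x\otimes P\}$ satisfies $xP=\varepsilon(x)P$ for all $x\in\cN$ (apply $\varepsilon\otimes\id$); here $F=0$ is exactly the statement $P\in\cN$, and the companion relation $(P\otimes P)\Delta(P)=P\otimes P$ supplies a symmetric module quotient coalgebra $P\cA$ governing $(\I\otimes P)\Delta(P)=P\otimes P$. The plan is to establish $P\in\cN$ by a faithfulness argument modelled on \Cref{pr.np-fd}, together with the finite-dimensionality that an integral-type relation forces (cf. \cite{Kasp_coint}); this membership and its left-handed counterpart make $P$ a right group-like projection, and $S(P)=P$ then upgrades it to a two sided group-like projection through \Cref{th.summary}. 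A convenient way to package the endgame is \Cref{square}: if $P$ can be placed in a left coideal subalgebra on which it is a central integral with $S^2(P)=P$, that proposition exhibits $P$ as a left shift of a two sided group-like projection $\tilde P$, so that $\Delta(P)(P\otimes\I)=P\otimes\tilde P$, and applying $\varepsilon\otimes\id$ with $\varepsilon(P)=1$ collapses this to $\tilde P=P$. Either way the crux — which I expect to be the genuine obstacle — is precisely the removal of the non-invertible factor, equivalently the membership $P\in\cN$, rather than any of the surrounding formal manipulations.
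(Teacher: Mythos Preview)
Your opening reductions are fine: the anti-automorphism $\flip\circ(S\otimes S)$ does convert $\Delta(P)(P\otimes P)=P\otimes P$ into $(P\otimes P)\Delta(P)=P\otimes P$, and \Cref{Corr1} does reduce everything to a single identity such as $\Delta(P)(\I\otimes P)=P\otimes P$. The problem is what comes next. You explicitly claim that this step ``cannot come out of formal Sweedler or antipode bookkeeping'' and therefore propose a detour through module coalgebras, faithfulness, and \Cref{square}. That claim is wrong, and the detour is not a proof.

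The paper obtains the missing identity by a short direct manipulation: apply $\ww_{lr}^{-1}$ to $\Delta(P)(P\otimes P)=P\otimes P$ and collapse the inner $S(P_{(2)})P_{(3)}$ via the antipode axiom to get $PP_{(1)}\otimes S(P_{(2)})P=P_{(1)}\otimes S(P_{(2)})P$; then apply $S\otimes\id$ and use $S(P)=P$ together with $(S\otimes S)\Delta(P)=\Delta^{\mathrm{op}}(P)$ to read this as $\Delta^{\mathrm{op}}(P)(P\otimes P)=\Delta^{\mathrm{op}}(P)(\I\otimes P)$, hence $\Delta^{\mathrm{op}}(P)(\I\otimes P)=P\otimes P$. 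Running the same argument with $\Delta^{\mathrm{op}}$ in place of $\Delta$ gives $\Delta(P)(\I\otimes P)=P\otimes P$, and \Cref{th.summary} finishes. So the very Sweedler computation you dismissed is the entire content of the proof.

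By contrast, your alternative route is only a sketch and appears circular. Invoking \Cref{square} requires exhibiting $P$ as a two-sided integral inside some left coideal subalgebra; but the membership $P\in\cN=\{x:\Delta(x)(\I\otimes P)=x\otimes P\}$ is precisely the equality $\Delta(P)(\I\otimes P)=P\otimes P$ you are trying to establish, and you give no independent mechanism for producing such a coideal. The faithfulness argument of \Cref{pr.np-fd} likewise already presupposes that $P$ is a right group-like projection. In short, the ``main obstacle'' you identify is real, but it is removed by the direct computation you ruled out, and your proposed workaround does not actually close the gap.
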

\begin{proof}
 For the needs of the Sweedler's notation  we shall denote $P = \tilde{P}$ when necessary. Using  $P_{(1)}\tilde P\otimes P_{(2)}\tilde P  = P\otimes P$ we get  
 \[P_{(1)}\tilde P_{(1)}\otimes S(\tilde P_{(2)})S( P_{(2)})P_{(3)}\tilde P = P_{(1)}\otimes S(P_{(2)})P.\] Noting that the left hand side is equal $PP_{(1)}\otimes S(P_{(2)})P$ we conclude that (note that tildas are dropped)
 \[S(P_{(1)})S(P)\otimes S(P_{(2)})P = S(P_{(1)})\otimes S(P_{(2)})P \] Now using $S(P) = P$ we get   $\Delta^{\textrm{op}}(P)(\I\otimes P) = P\otimes P$.
 
 Since $\Delta^{\textrm{op}}(P)(P\otimes P) = P\otimes P$ we can apply the same reasoning to $\Delta^{\textrm{op}}$ to get $\Delta(P)(\I\otimes P) = P\otimes P$. Using \Cref{th.summary} we see that $P$ is a two sided group-like projection.
\end{proof}
\subsection{Semisimple Hopf algebras}\label{se.ss-coss}

Throughout the present section $\cA$ denotes a semisimple Hopf algebra over $\mathbbm{k}$.

\begin{lemma}\label{le.coid}
An arbitrary coideal subalgebra $\cN\subseteq \cA$ is of the form $\cN_P$ for some right group-like projection $P$. 
\end{lemma}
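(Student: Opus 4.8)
The plan is to reduce everything to the single statement that \emph{any} left coideal subalgebra $\cN\subseteq\cA$ of a semisimple Hopf algebra is itself semisimple. Indeed, granting this, \Cref{le.n=np} applies verbatim: it produces a central counital projection $P\in\cN$ with $\ker(\varepsilon|_{\cN})=\cN(1-P)$, shows $\cV_P\subseteq\cN\subseteq\cN_P$, and forces all these inclusions to be equalities, so $\cN=\cN_P$ for the right group-like projection $P$. Thus the entire content of the lemma is the algebraic semisimplicity of $\cN$, and the rest of the proof is bookkeeping already done upstream.

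For the semisimplicity of $\cN$ I would run a Maschke/averaging argument. Since $\cA$ is a semisimple Hopf algebra it carries a normalized two-sided integral, and (in characteristic zero via Larson--Radford, or whenever $\cA$ is additionally cosemisimple) a normalized Haar functional $h\in\cA^*$ with $h(\I)=1$ and two-sided invariance. First I would use $h$ together with the left $\cA$-comodule structure $\Delta|_{\cN}\colon\cN\to\cA\otimes\cN$ to build an $(\cN,\cN)$-bimodule projection $E\colon\cA\to\cN$ with $E|_{\cN}=\id$, i.e.\ to realize $\cN$ as a bimodule retract of $\cA$. From such a conditional expectation one deduces that the extension $\cN\subseteq\cA$ is separable; combining this with the fact that $\cA$ is free, hence faithfully flat, as an $\cN$-module (Skryabin, \cite{skr-proj}) and with the Hirata-type relation $J(\cA)=J(\cN)\,\cA$ valid for separable extensions, the vanishing $J(\cA)=0$ of the radical of the semisimple algebra $\cA$ propagates to $J(\cN)\,\cA=0$, and faithfulness of $\cN\hookrightarrow\cA$ then gives $J(\cN)=0$. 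As $\cN$ is finite dimensional (it is even finite dimensional by \Cref{cor.fd} once it is seen to carry an integral, but here finite dimensionality of $\cA$ suffices), this is exactly semisimplicity.

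The hard part will be the construction of the bimodule expectation $E\colon\cA\to\cN$, equivalently the separability of the extension $\cN\subseteq\cA$. The difficulty is that a general coideal subalgebra $\cN$ is neither normal nor a Hopf subalgebra, so one cannot average naively with the antipode as in the classical Maschke theorem; the averaging must be carried out purely through the comodule structure of $\cN$ and the invariance of $h$, and over an arbitrary base field one likely needs Skryabin's freeness theorem to license the faithfully flat descent step. Once this obstacle is cleared and $\cN$ is known to be semisimple, \Cref{le.n=np} delivers the right group-like projection $P$ with $\cN=\cN_P$ (and in fact $\cN_P=\cV_P$), which completes the argument.
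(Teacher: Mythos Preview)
Your reduction is exactly the paper's: show that any left coideal subalgebra $\cN$ of a semisimple Hopf algebra is itself semisimple, then invoke \Cref{le.n=np}. The paper's own proof is two lines: it cites \cite[Lemma 4.0.2]{bur-coid} or \cite[Theorem 5.2]{skr-proj} for the semisimplicity of $\cN$ and then applies \Cref{le.n=np}. So at the architectural level you and the paper agree, and the ``bookkeeping'' paragraph of your proposal is precisely what the paper does.

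Where you diverge is in trying to prove the semisimplicity of $\cN$ from scratch, and there the sketch has real gaps. First, your averaging argument requires a normalized Haar functional $h\in\cA^*$, i.e.\ cosemisimplicity of $\cA$; the section hypothesizes only that $\cA$ is semisimple over an arbitrary field, and in positive characteristic semisimple Hopf algebras need not be cosemisimple. So as written your argument does not cover the stated generality. Second, you yourself flag the construction of the $(\cN,\cN)$-bimodule retraction $E\colon\cA\to\cN$ as ``the hard part'' and do not actually carry it out; the obvious slice $(h\otimes\id)\Delta$ does not land in $\cN$, and there is no comodule map $\cA\to\cA\otimes\cN$ to average against. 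Third, the Hirata-type identity $J(\cA)=J(\cN)\cA$ you invoke is not a general consequence of separability of a ring extension and would itself need justification here.

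In short: the plan is right and matches the paper, but the proposed in-house proof of semisimplicity is incomplete and rests on an extra cosemisimplicity assumption. Either cite the result as the paper does, or replace your Haar-functional argument by one that uses only the normalized integral $\Lambda\in\cA$ together with Skryabin's freeness of $\cA$ over $\cN$ (which is the route those references take).
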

\begin{proof}
 Since $\mathcal{A}$ is semisimple, so is $\cN$ (e.g. \cite[Lemma 4.0.2]{bur-coid} or \cite[Theorem 5.2]{skr-proj}). Now simply apply \Cref{le.n=np}. 
\end{proof}

All in all, the conclusion is that right group-like projections are an alternate manifestation of coideal subalgebras, at least in semisimple Hopf algebras, as the following one-sided analogue of \Cref{th.2sd-s2} confirms.

\begin{theorem}\label{th.proj-coid}
  Let $\cA$ be a semisimple Hopf algebra. The maps
  \begin{equation*}
   \cN\mapsto P_\cN:=\text{the central projection satisfying }\ker\left(\varepsilon|_{\cN}\right) = (\I-P)\cN 
  \end{equation*}
and $P\mapsto \cN_P$ are mutually inverse bijections between the set of coideal subalgebras of $\cA$ and its set of right group-like projections. 
\end{theorem}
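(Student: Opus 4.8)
The plan is to read the theorem off as a bookkeeping consequence of the lemmas already established, the glue being that in a semisimple algebra the augmentation ideal pins down its complementary central idempotent uniquely. Since to exhibit a pair of mutually inverse bijections it suffices to check both composites are the identity, the proof reduces to two verifications together with well-definedness.

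First I would check that both maps are well defined. The assignment $P\mapsto\cN_P$ sends a right group-like projection to a coideal subalgebra directly from \Cref{def_coids}, so this direction needs no comment. For $\cN\mapsto P_\cN$ I would argue that, since $\cA$ is semisimple, so is every coideal subalgebra $\cN\subseteq\cA$ (as recorded in the proof of \Cref{le.coid}); hence the augmentation ideal $\ker(\varepsilon|_\cN)$, being a two-sided ideal of codimension one, is of the form $(\I-P)\cN$ for a unique central idempotent $P=P_\cN\in\cN$, which is nonzero because the augmentation ideal is proper. That $P_\cN$ is genuinely a right group-like projection — i.e. that this map lands in the advertised target set — is exactly the content of \Cref{le.n=np}, whose proof produces precisely this central integral $P$ and shows $\cN=\cN_P$.

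Next I would verify the two composites. For $P\mapsto\cN_P\mapsto P_{\cN_P}$: by \Cref{pr.min-cntr} the projection $P$ is central in $\cN_P$ and satisfies $\ker(\varepsilon|_{\cN_P})=(\I-P)\cN_P$; by the uniqueness of the complementary central idempotent noted above, this forces $P_{\cN_P}=P$. For $\cN\mapsto P_\cN\mapsto\cN_{P_\cN}$: \Cref{le.n=np} (equivalently \Cref{le.coid}) gives $\cN=\cN_P$ with $P$ the central integral, and since that $P$ coincides with $P_\cN$ by construction, we obtain $\cN_{P_\cN}=\cN$. Both composites being the identity, the maps are mutually inverse bijections, which is the claim.

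The only genuine subtlety is the uniqueness step — that in a semisimple algebra the complement of the augmentation ideal determines a single central projection — since this is what makes the two directions match up automatically. There is otherwise no serious obstacle: all the substantive work (finite-dimensionality via \Cref{cor.fd} and \Cref{pr.np-fd}, the identity $\cN_P=\cV_P$, and the fact that the central integral of a semisimple coideal subalgebra is a right group-like projection) has already been carried out in the preceding results, most of it packaged in \Cref{le.n=np}, so the theorem requires no computation beyond invoking these.
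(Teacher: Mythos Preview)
Your proof is correct and follows essentially the same approach as the paper's: both directions are reduced to \Cref{le.coid}/\Cref{le.n=np} together with \Cref{pr.min-cntr}. The only cosmetic difference is in verifying $P_{\cN_P}=P$: you argue directly that $P$ satisfies the defining property of $P_{\cN_P}$ (via \Cref{pr.min-cntr}) and invoke uniqueness of the central idempotent, whereas the paper first applies \Cref{le.coid} to $\cN_P$ to obtain $\cN_{P_{\cN_P}}=\cN_P$ and then uses the injectivity clause of \Cref{pr.min-cntr}; these are the same argument viewed from two sides.
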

\begin{proof}
  \Cref{le.coid} verifies that
  \begin{equation*}
    \cN\mapsto P_{\cN}\mapsto \cN_{P_{\cN}}
  \end{equation*}
is the identity. On the other hand, consider a right group-like projection $P$. Its associated coideal subalgebra $\cN=\cN_P$ has an attached projection $Q=P_{\cN_P}$. \Cref{le.coid} once more shows that $\cN_Q={\cN}=\cN_P$, meaning that $Q=P$ by \Cref{coipr}.   
\end{proof}

\begin{remark}
  Using \cite[Theorem 3]{Radford} and \Cref{th.summary} we see that   right group-like projections are two sided group-like projections if $\cA$ in a simple Hopf algebra over a field of characteristic $p>\dim(H)^2$.

  In fact, more generally, if the Hopf algebra is simultaneously semisimple and cosemisimple then its antipode is an involution \cite{eg}; it then again follows from \Cref{th.summary} that right group-like projections are two-sided. 
\end{remark}

\begin{remark}\label{rem_bijc} Let $\cA$ be a finite dimensional Hopf algebra and $\cN$ a left coideal subalgebra. Let $\pi:\cA\to\cC_\cN$ be the assigned left module  quotient  coalgebra as described in \Cref{fflat1}. Then   $\pi^*(\cC^*)\subset\cA^*$ forms a left coideal subalgebra. 
Using \cite[Corollary 6.5]{skr-proj} we see that  $\cN\mapsto\pi^*(\cC_{\cN}^*)$  establishes a bijective correspondence between left coideal subalgebras of $\cA$ and $\cA^*$. Let us note that $\pi^*(\cC_{\cN}^*)$ is of the form $\cN_{\tilde{P}}$ for some right  group like projection $\tilde{P}\in\cA^*$ if and only if $\pi^*(\cC_{\cN}^*)$ admits an a two sided counital integral in the sense of \Cref{def.integral}. Indeed, in this case \[\cV_{\tilde{P}}\subseteq \pi^*(\cC_{\cN}^*)\subseteq \cN_{\tilde{P}}=\cV_{\tilde{P}}\] where the in last equality we use \Cref{th.wf-hopf}. Clearly $\pi^*(\cC_{\cN}^*)$ admits an integral if and only if there exists a functional $\psi\in\cC_{\cN}^*$ such that $(\psi\otimes\id)(\Delta_{\cC_{\cN}}(x)) = (\id\otimes\psi)(\Delta_{\cC_{\cN}}(x)) = \psi(x)\pi(\I)$ for all $x\in\cC_{\cN}$. 

Suppose that  $\cA$ is a semisimple and cosemisimple Hopf algebra. Then $\cA^*$ is also semisimple and cosemisimple.  Since in this case the antipode is involutive, then using \Cref{th.proj-coid} and the above considerations  we get a bijective correspondence between   group-like projections in $\cA$ and $\cA^*$. 
\end{remark}


\section{Taft algebras}\label{se.tft}

The purpose of the present section is to classify the left coideal subalgebras of a certain family of Hopf algebras, identifying among them those that correspond to right group-like projections via the construction $P\mapsto \cN_P$ from \Cref{eq:np}. 

Following \cite{tft}, we have

\begin{definition}\label{def.tft}
  Let $n$ be a positive integer and $\omega\in \kk$ a primitive $n^{th}$ root of unity. The {\it Taft Hopf algebra} $H_{n^2}$ is the algebra $\kk[x]/(x^n)\rtimes \kk[G]$, where $G=\mathbb{Z}/n\mathbb{Z}$ is generated by $g$ and its action on the truncated polynomial ring $\kk[x]/(x^n)$ is by
  \begin{equation*}
    gxg^{-1} = \omega x. 
  \end{equation*}
  The coalgebra structure is given by
  \begin{equation*}
    \Delta(g) = g\otimes g,\quad \Delta(x)=x\otimes 1 + g\otimes x. 
  \end{equation*}
\end{definition}

These Hopf algebras appear frequently in the literature, being $\omega$-deformed analogues of the enveloping algebra of the Borel subalgebra $\mathfrak{b}\subset\mathfrak{sl}_2$. See e.g. \cite{majid,pw-var-mod,pv-ideals}. They were originally introduced in \cite{tft} as part of a larger family meant to show that finite-dimensional Hopf algebras can have antipodes of arbitrary even order (for $H_{n^2}$ the antipode order is $2n$). 

The notation $H_{n^2}$ is meant to remind the reader that $H$ so defined is $n^2$-dimensional, for instance with a basis consisting of
\begin{equation*}
  x^ig^j,\ 0\le i,j\le n-1.
\end{equation*}
Throughout this section we focus on the case when $n$ is prime, this assumption being henceforth implicitly in place unless specified otherwise. Moreover, since we are assuming that $\kk$ contains primitive $n^{th}$ roots of unity, the characteristic of $\kk$ is coprime to $n$ and hence $\kk G$ is semisimple.

Let $\cN\subset H=H_{n_2}$ be a proper, non-trivial left coideal subalgebra. $\cN$ further maps through
\begin{equation}\label{eq:nhg}
  \cN\to H\to \kk G
\end{equation}
to a coideal subalgebra of $\kk G$. The latter are simply the subalgebras of the form $\kk G'$ for subgroups $G'\le G$, and hence are automatically semisimple. On the other hand, the kernel of \Cref{eq:nhg} consists of nilpotent elements. In conclusion, the kernel of \Cref{eq:nhg} is precisely the Jacobson radical $J(\cN)$ while its image is the semisimple quotient $\cN_{ss}$ of $\cN$. 

We have two possibilities: 
\begin{itemize}
\item $\cN$ is semisimple, meaning that \Cref{eq:nhg} is one-to-one;
\item it is not, i.e. $J(N)$ is non-zero. 
\end{itemize}
$H$ is naturally $\bZ$-graded by
\begin{equation*}
  \deg g=0,\ \deg x=1. 
\end{equation*}
We denote the degree-$d$ component of $H$ by $H_d$. Equipped with the grading just defined $H$ is a graded Hopf algebra in the sense of \cite[Definition 10.5.11]{mont}: it is a graded algebra as well as a graded coalgebra, i.e.
\begin{equation*}
  \Delta(H_d)\subset \bigoplus_i H_i\otimes H_{d-i}. 
\end{equation*}
We will often refer to the degree of a possibly-non-homogeneous element $y$ of $H$, meaning the largest degree $i$ of an element $x^i g^j$ appearing in an expansion of $y$ in the basis $\{x^ig^j\}$. We write $\deg(y)$ for the degree in this sense, as well as $\deg'(y)$ for the {\it smallest} degree of a summand of $y$. The {\it free term} of $y$ is the sum of all of its degree-zero summands; equivalently, it is the image of $y$ through the surjection $H\to \kk G$. It is non-zero precisely when $\deg'(y)=0$. 

We now start to analyze the structure of the coideal subalgebras of $H$.

\begin{lemma}\label{le.lrg-deg}
  Let $n\ge 2$ be an arbitrary positive integer and $\cN\subset H=H_{n^2}$ a non-trivial proper left coideal subalgebra.
  
  If $\cN$ contains a non-zero element $y$ with no free term then it contains, for each $1\le k\le n-1$, an element of the form $x^kg^i$ for some $0\le i\le n-1$ (possibly dependent on $k$). 
\end{lemma}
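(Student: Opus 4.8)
The plan is to record the comultiplication on the monomial basis and then reduce everything to the production of a single monomial of positive degree. A direct induction from $\Delta(x)=x\otimes 1+g\otimes x$ together with $gx=\omega xg$ gives, with Gaussian binomial coefficients $\binom{k}{i}_\omega$ built from $[k]_\omega:=1+\omega+\cdots+\omega^{k-1}$,
\begin{equation*}
  \Delta(x^kg^j)=\sum_{i=0}^{k}\binom{k}{i}_\omega\, x^{k-i}g^{i+j}\otimes x^ig^j .
\end{equation*}
Since $\omega$ is a primitive $n$th root of unity, $[m]_\omega\neq 0$ for $1\le m\le n-1$, hence $\binom{k}{i}_\omega\neq 0$ whenever $0\le i\le k\le n-1$. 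Using that $\cN$ is a left coideal, so that $(\phi\otimes\id)\Delta(\cN)\subseteq\cN$ for every $\phi\in H^*$, I would first note that it suffices to find \emph{one} element $x^ag^i\in\cN$ with $a\ge 1$: applying $(x^{a-1}g^{i+1})^*\otimes\id$ to $\Delta(x^ag^i)$ yields $[a]_\omega\,xg^i\in\cN$, and then the powers $(xg^i)^k=\omega^{i\binom{k}{2}}x^kg^{ki}$ lie in $\cN$ and furnish the required element $x^kg^{ki}$ for each $1\le k\le n-1$.

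To locate such a monomial I would exploit the grading. Let $\pi\colon H\to\kk G$ be the Hopf surjection onto the free term (killing $x$); then $\rho:=(\pi\otimes\id)\Delta$ makes $H$ a left $\kk G$-comodule algebra with $\rho(x^kg^j)=g^{k+j}\otimes x^kg^j$, i.e.\ the $\bZ/n\bZ$-grading by \emph{weight} $k+j$. Applying $\pi\otimes\id$ to $\Delta(\cN)\subseteq H\otimes\cN$ shows $\rho(\cN)\subseteq \kk G\otimes\cN$, so $\cN$ is a $\kk G$-subcomodule and hence a weight-graded subspace; moreover each weight-homogeneous component of a free-term-free element is again free-term-free, since its degree-$0$ part is a single scalar multiple of $g^w$ and all such vanish. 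Thus I may replace $y$ by a nonzero weight-homogeneous free-term-free element, and in fact choose one of \emph{minimal top degree} $D$, say $y=\sum_{k=d}^{D}\gamma_k x^kg^{w-k}$ with $\gamma_d,\gamma_D\neq 0$ and $1\le d\le D$.

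Finally I would produce the monomial. For $1\le a\le D-1$, the functional $(x^ag^{w-a})^*$ on the first leg of $\Delta(y)$ extracts
\begin{equation*}
  z_a:=\sum_{m=0}^{D-a}\gamma_{a+m}\binom{a+m}{m}_\omega\, x^{m}g^{\,w-a-m}\in\cN ,
\end{equation*}
again weight-homogeneous, of top degree $D-a<D$ and with free term $\gamma_a g^{w-a}$. If some $a\in\{1,\dots,D-1\}$ had $\gamma_a=0$, then $z_a$ would be a nonzero weight-homogeneous free-term-free element of smaller top degree, contradicting minimality; so $\gamma_k\neq 0$ for all $1\le k\le D$, and in particular $d=1$. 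If $D=1$ then $y=\gamma_1 xg^{w-1}$ is already the desired monomial. If $D\ge 2$, I would use that $y$, having no free term, lies in $J(\cN)$ and is nilpotent: in $y^{\,n-1}$ every summand is a product of $n-1$ factors of degree $\ge 1$, hence of total degree $\ge n-1$, while $x^{m}=0$ for $m\ge n$ forces total degree $\le n-1$; so only the all-degree-one contribution survives, giving
\begin{equation*}
  y^{\,n-1}=\gamma_1^{\,n-1}\,\omega^{(w-1)\binom{n-1}{2}}\,x^{\,n-1}g^{\,(n-1)(w-1)}\neq 0,
\end{equation*}
a positive-degree monomial in $\cN$. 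Either way $\cN$ contains a monomial $x^ag^i$ with $a\ge 1$, and the reduction of the first paragraph completes the proof. The main obstacle is precisely that the coideal extractions $(\phi\otimes\id)\Delta(y)$ never isolate a single monomial on the nose—they always drag along lower-degree (and group-algebra) terms—so the essential work lies in passing to weight-homogeneous components and in the minimal-degree/nilpotency dichotomy that forces a genuine monomial to appear.
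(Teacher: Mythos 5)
Your proof is correct; every step checks out (the $q$-binomial formula for $\Delta(x^kg^j)$, the non-vanishing of $\binom{k}{i}_\omega$ for $k\le n-1$, the weight-space decomposition of $\cN$, the minimality argument forcing $\gamma_1\ne 0$, and the collapse of $y^{n-1}$ to a single monomial). The overall skeleton is the same as the paper's: both proofs use functionals of the form $(\varphi\otimes\id)\Delta$ to extract new elements of $\cN$, both manufacture a top-degree monomial $x^{n-1}g^j$ by raising a suitable element to the power $n-1$ so that all cross terms die against $x^n=0$, and both then recover $xg^j$ from $x^{n-1}g^j$ and take its powers. Where you genuinely diverge is in the normalization that precedes the power trick. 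The paper works only with the $x$-degree filtration: it first lowers $\deg'$ to $1$ by repeatedly applying functionals in $H_1^*$, and then isolates a single monomial $xg^j$ in the degree-one part by hitting $\Delta$ with the functional dual to one group element $g^{j+1}$. You instead exploit the $\bZ/n\bZ$-grading by weight $k+j$ coming from the $\kk G$-comodule structure $(\pi\otimes\id)\Delta$ --- a decomposition the paper never invokes --- which makes every homogeneous component automatically a single monomial in each degree, and you replace the paper's $\deg'$-lowering step by a minimal-top-degree argument showing all coefficients $\gamma_1,\dots,\gamma_D$ are nonzero. Your route is somewhat longer but more structural: it explains \emph{why} a nonzero degree-one coefficient must appear (minimality would otherwise be violated), records the full comultiplication formula with Gaussian binomials, and yields the clean intermediate reduction that a single monomial $x^ag^i$ with $a\ge 1$ already suffices. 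The paper's version is shorter but leaves more of the non-vanishing bookkeeping implicit.
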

\begin{proof}
  Let
  \begin{equation*}
    y=y_{d'}+\cdots +y_d
  \end{equation*}
  where $y_i\in H_i$ and 
  \begin{equation*}
    d=\deg(y),\ 0<d'=\deg'(y). 
  \end{equation*}
  Because $\cN$ is a left coideal subalgebra the elements
  \begin{equation*}
    \varphi(y_{(1)})y_{(2)},\ \varphi\in H^*
  \end{equation*}
  are all contained in $\cN$. Choosing $\varphi$ in the summand $H_1^*$ of $H^*$ will produce an element $z\in \cN$ with $\deg'(z)=\deg'(y)-1=d'-1$. Applying this repeatedly if necessary we obtain elements $z\in \cN$ with $\deg'(z)=1$. We thus discard the symbol `$z$' and assume $d'=1$ throughout the rest of the proof.

  Writing $y_1$ as $\sum a_j xg^j$ for $0\le j\le n-1$ we have
  \begin{equation*}
    y_{1(1)}\otimes y_{1(2)} = \sum a_j \left(xg^j\otimes g^j+g^{j+1}\otimes xg^j\right)
  \end{equation*}
  Applying to this element a functional of the form $\varphi\otimes \id$ for $\varphi\in H^*_0$ annihilating all $g^\ell$ save for a single $g^{j+1}$ with $a_j\ne 0$ we obtain an element $z\in \cN$ with $\deg'(z)=1$ and $z_1$ being a single element $xg^j$ of the standard basis of $H$.

  We now have
  \begin{equation*}
    x^{n-1}g^{j(n-1)} = z_1^{n-1} = z^{n-1}\in \cN,
  \end{equation*}
  i.e. $\cN$ contains monomials of top degree $n-1$. We re-purpose `$j$' and simply write $x^{n-1}g^j\in \cN$. The $H_{n-2}\otimes H_1$-component of $\Delta(x^{n-1}g^j)\in H\otimes \cN$ is
  \begin{equation*}
    (1+\cdots+\omega^{n-2})x^{n-2}g^{j+1}\otimes xg^j = -\omega^{n-1} x^{n-2}g^{j+1}\otimes xg^j\ne 0
  \end{equation*}
so the right hand tensorand $xg^j$ belongs to $\cN$. Finally, the powers of this element satisfy the desired conclusion. 
\end{proof}

This will allow us to better understand the coideal subalgebras falling in the second of the two qualitative classes noted in the discussion preceding \Cref{le.lrg-deg}.

\begin{proposition}\label{pr.nss}
  The non-semisimple coideal subalgebras $\cN\subset H$ are $\cN_{d,x}$ for various divisors $d|n$, defined by
  \begin{equation}\label{eq:nd}
    \cN_{d,x}:=\text{algebra generated by }g^d\text{ and }x.
  \end{equation}
\end{proposition}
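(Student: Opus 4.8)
The plan is to prove the two inclusions separately. First I would record the easy direction: each $\cN_{d,x}$ from \Cref{eq:nd} really is a non-semisimple left coideal subalgebra. Indeed $\Delta(g^d)=g^d\otimes g^d$ and $\Delta(x)=x\otimes 1+g\otimes x$ both lie in $H\otimes\cN_{d,x}$, so $\cN_{d,x}$ is a left coideal, and it is non-semisimple because $x$ is a nonzero nilpotent lying in its Jacobson radical (its image in $\kk G$ is the semisimple algebra $\kk\langle g^d\rangle$, and the kernel of that map consists of nilpotents).

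For the converse, let $\cN$ be non-semisimple, so $J(\cN)\neq 0$ and $\cN$ contains a nonzero element with no free term. \Cref{le.lrg-deg} then produces an element $xg^i\in\cN$ for some $i$. The key structural lever I would establish next is that $\cN$ is homogeneous for the $\bZ/n$-grading by weight $w(x^kg^j)=k+j \bmod n$; write $\cN=\bigoplus_w\cN_{(w)}$ for the resulting decomposition. This follows by applying to $\Delta(y)$, for $y\in\cN$, the functional dual to the group element $g^\ell$: using $\Delta(x^kg^j)=\sum_i\binom{k}{i}_\omega x^ig^{k-i+j}\otimes x^{k-i}g^j$ (where only $\binom{k}{0}_\omega=\binom{k}{k}_\omega=1$ will be needed), only the $i=0$ summands survive, and one reads off that the weight-$\ell$ component of $y$ again lies in $\cN$.

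With homogeneity in hand, applying the functional dual to $xg^i$ to $\Delta(xg^i)=xg^i\otimes g^i+g^{i+1}\otimes xg^i$ isolates the right tensorand $g^i\in\cN$, whence $x=(xg^i)g^{-i}\in\cN$ (as $g^i$ has finite order, $g^{-i}\in\cN$ too). Let $H'=\{g^j:g^j\in\cN\}$; being a subgroup of the cyclic group $G$ it equals $\langle g^d\rangle$ for a unique divisor $d\mid n$, and since $g^d,x\in\cN$ we obtain the forward inclusion $\cN_{d,x}\subseteq\cN$.

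The crux is the reverse inclusion $\cN\subseteq\cN_{d,x}$, which I would prove weight class by weight class. Fix $w$ and take $v=\sum_k a_k x^kg^{w-k}\in\cN_{(w)}$ with top index $K=\max\{k:a_k\neq 0\}$. Applying the functional dual to the left tensorand $x^Kg^{w-K}$ to $\Delta(v)=\sum_k a_k\sum_i\binom{k}{i}_\omega x^ig^{w-i}\otimes x^{k-i}g^{w-k}$ isolates the single summand with $i=k=K$ and returns $a_Kg^{w-K}\in\cN$; hence $g^{w-K}\in H'=\langle g^d\rangle$, forcing $d\mid(w-K)$, i.e. $x^Kg^{w-K}\in\cN_{d,x}$. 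Subtracting this monomial (which lies in $\cN_{d,x}\subseteq\cN$) and inducting downward on the top index shows $v\in\cN_{d,x}$, so $\cN_{(w)}\subseteq\cN_{d,x}$ for every $w$ and thus $\cN=\cN_{d,x}$. The main obstacle is precisely this reverse inclusion, and everything hinges on the weight-grading observation: it converts the coideal condition into the production of a forbidden group element $g^{w-K}$ whenever the top monomial of an element of $\cN$ strays outside $\cN_{d,x}$. For $n$ prime one may alternatively phrase the final step as the dichotomy $H'=G$ (giving $\cN=H=\cN_{1,x}$) versus $H'=\{1\}$ (giving $\cN=\kk[x]/(x^n)=\cN_{n,x}$), which is how the two non-semisimple classes, indexed by the divisors $1$ and $n$, arise.
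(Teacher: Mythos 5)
Your proof is correct, and its skeleton matches the paper's: both rely on \Cref{le.lrg-deg} to produce some $xg^i\in\cN$, identify $d$ through the group elements contained in $\cN$, and use the same extraction mechanism --- pairing the left leg of $\Delta$ against the functional dual to a top-degree monomial $x^K g^j$ of an element of $\cN$ to force $g^j\in\cN$ and hence $d\mid j$. Where you differ is in the bookkeeping for the inclusion $\cN\subseteq\cN_{d,x}$: the paper inducts on the $\bZ$-degree ($\deg x=1$, $\deg g=0$) and asserts rather tersely that containment of all top-degree components in $\cN_{d,x}$ propagates downward, whereas you first observe that $\cN$ is homogeneous for the $\bZ/n$-weight grading $w(x^kg^j)=k+j$ (by pairing the left leg of $\Delta$ against the duals of the group elements, where only the $i=0$ summands survive) and then run a downward induction on the top $x$-exponent inside each weight component. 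Your organization buys an airtight induction: since you establish $\cN_{d,x}\subseteq\cN$ \emph{before} the reverse inclusion, the monomial $a_K x^K g^{w-K}$ you peel off is visibly in $\cN$, so the remainder stays in $\cN$ and the induction closes --- a point the paper's one-line ``by induction on degree'' leaves implicit (it likewise needs the already-available forward inclusion, or an associated-graded argument, to license the subtraction). Both arguments work for arbitrary $n\ge 2$, not only $n$ prime, and your closing dichotomy $H'=G$ versus $H'=\{1\}$ correctly recovers the two non-semisimple classes in the prime case.
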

\begin{proof}
It is immediate that $\cN_{d,x}$ is indeed a coideal subalgebra (it is the algebra generated by the coideals $\mathrm{span}\{1,x\}$ and $\kk g^d$). We now have to show that an arbitrary (non-semisimple) $\cN$ is of the form $\cN_{d,x}$ for some $d|n$. 

First, we identify $d$: the intersection $\cN\cap \kk G$ is of the form $\kk G'$ for some subgroup $G'\le G$, and we set $d=|G/G'|$. Now let $y\in \cN$ be an element of degree $\ell>0$ whose maximal-degree component $y_\ell$ contains a scalar multiple of some $x^{\ell} g^j$. 

Choose some $\varphi\in H^*$ that annihilates all $x^i g^j$ except for $i=\ell$. Applying $\varphi\otimes\id$ to $\Delta(y)$ produces a non-zero scalar multiple of $g^j$, which must belong to $\cN$ because the latter is a left coideal. Our definition of $d$ entails $d|j$. In conclusion, for all $y\in \cN$ the top-degree component $y_{\ell}$ belongs to $\cN_{d,x}$; by induction on degree we obtain $\cN\subseteq \cN_{d,x}$.

Conversely, $\cN$ contains $G'=\{g^{kd}\ |\ k\in \bZ\}$ as well as $xg^j$ for some $j$ by \Cref{le.lrg-deg}. We already know from the preceding discussion that $d|j$, so some element $(xg^j)(g^d)^k\in \cN$ equals $x$. This proves the opposite inclusion $\cN\supseteq \cN_{d,x}$ and finishes the proof.
\end{proof}

So far, by way of classification we have 

\begin{proposition}\label{th.tft-clsf}
  Let $n\ge 2$ be a positive integer. The left coideal subalgebras of $H_{n^2}$ fall into two classes:
  \begin{itemize}
  \item semisimple ones isomorphic to $\kk G'$ through \Cref{eq:nhg} for subgroups $G'\le G$, in bijection with right group-like projections; 
  \item non-semisimple, equal to one of the coideal subalgebras $\cN_{d,x}$ in \Cref{eq:nd} for divisors $d|n$.    
  \end{itemize}  
\end{proposition}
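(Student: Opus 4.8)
The backbone of the argument is the dichotomy recorded in the discussion preceding \Cref{le.lrg-deg}: for a proper non-trivial left coideal subalgebra $\cN\subset H$, the composite \Cref{eq:nhg} has kernel equal to the Jacobson radical $J(\cN)$ and image a coideal subalgebra $\kk G'$ of $\kk G$ for some subgroup $G'\le G$. Thus $\cN$ is semisimple exactly when $J(\cN)=0$, i.e.\ when \Cref{eq:nhg} is injective, in which case it restricts to an isomorphism of $\cN$ onto its image $\kk G'$; this establishes the structural claim of the first bullet. When $J(\cN)\ne 0$, \Cref{pr.nss} already identifies $\cN$ as one of the algebras $\cN_{d,x}$ of \Cref{eq:nd} for a divisor $d\mid n$, which is the second bullet. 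So the two qualitative classes and their shapes are in hand, and what remains is to match the semisimple class bijectively with right group-like projections.

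For the bijection I would argue in two directions. Given a semisimple coideal subalgebra $\cN$, \Cref{le.n=np} produces a right group-like projection $P$ with $\cN=\cN_P$ (and $\cN_P=\cV_P$); conversely, if $\cN_{P_1}=\cN_{P_2}$ then $P_1=P_2$ by \Cref{pr.min-cntr}, so the projection is determined by the coideal subalgebra. Hence $P\mapsto\cN_P$ is injective and its image contains every semisimple coideal subalgebra. It then remains to check that the image is \emph{exactly} the semisimple class, equivalently that $\cN_P$ is always semisimple, i.e.\ that none of the non-semisimple $\cN_{d,x}$ is of the form $\cN_P$.

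This last point is the crux, and the only genuinely new input. Suppose toward a contradiction that $\cN_{d,x}=\cN_P$ for a right group-like projection $P$. By \Cref{pr.min-cntr}, $P$ is a counital central integral of $\cN_{d,x}$, so $xP=\varepsilon(x)P=0$ for the degree-one generator $x$. Expanding $P=\sum_{i,j}c_{ij}\,x^i g^{dj}$ in the standard basis of $\cN_{d,x}$ and using $x^n=0$, the relation $xP=\sum_{i,j}c_{ij}\,x^{i+1}g^{dj}=0$ forces $c_{ij}=0$ for all $i\le n-2$, so $P$ is a combination of top-degree monomials $x^{n-1}g^{dj}$ and therefore satisfies $\varepsilon(P)=0$, contradicting $\varepsilon(P)=1$. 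Consequently $\cN_{d,x}$ admits no right group-like projection, every $\cN_P$ lands in the semisimple class, and $P\mapsto\cN_P$ is the asserted bijection between right group-like projections and semisimple coideal subalgebras. I expect this integral obstruction to be the main obstacle; everything else is assembly of \Cref{pr.nss}, \Cref{le.n=np}, and \Cref{pr.min-cntr}.
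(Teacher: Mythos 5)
Your proposal is correct and follows essentially the same route as the paper: the dichotomy via \Cref{eq:nhg}, \Cref{pr.nss} for the non-semisimple case, and \Cref{le.n=np} together with \Cref{pr.min-cntr} for the bijection with right group-like projections. The "crux" you isolate --- that no $\cN_{d,x}$ carries a right group-like projection, via the integral obstruction $xP=0$ forcing $\varepsilon(P)=0$ --- is exactly the argument the paper records in the remark immediately following the proposition (phrased there with $Px^{n-1}=0$ instead of $xP=0$).
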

\begin{proof}
  This is a consequence of \Cref{pr.nss} together with the fact that a semisimple coideal subalgebra is of the form $\cN_P$ for a unique right group-like projection $P$ by \Cref{le.n=np}.
\end{proof}
\begin{remark}
Note that the non-semisimple coideals do not correspond to group-like projections. Indeed, suppose that $P\in H_{n^2}$ is a group-like projection which is an integral  for $\cN_{d,x}$. Then $Px^{n-1}=0$ and hence $P $ is of the form $yx$ for some $y\in H_{n^2}$. But this implies $\varepsilon(P) = 0$ - contradiction. 
\end{remark}
It remains to give a more concrete description of the semisimple coideal subalgebras, forming (as we will see) the bulk of the classification in \Cref{th.tft-clsf}.

\begin{proposition}\label{th.ss-clsf}
  Let $n\ge 2$ be an arbitrary positive integer.  The semisimple left coideal subalgebras of $H=H_{n^2}$ are of one of two types:
  \begin{itemize}
  \item $\kk G'\subset \kk G\subset H$ for some subgroup $G'\subset G$;
  \item $\cN_{\beta}:=\text{algebra generated by }g+\beta x$ for some $\beta\in \kk^{\times}$.    
  \end{itemize}
\end{proposition}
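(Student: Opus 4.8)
The plan is to prove both inclusions: that each of the two listed families consists of semisimple left coideal subalgebras, and that every semisimple left coideal subalgebra is one of them. The first family, $\kk G'\subset\kk G$, is already known to be a semisimple coideal subalgebra. For $\cN_\beta$ I would first note it is a left coideal subalgebra from $\Delta(g+\beta x)=g\otimes(g+\beta x)+\beta x\otimes\I\in H\otimes\cN_\beta$, and then establish semisimplicity by showing $(g+\beta x)^n=\I$. Writing $s=xg^{-1}$ one has $gs=\omega sg$ and $s^n=0$, whence $(g+\beta x)^n=\big((\I+\beta s)g\big)^n=\prod_{j=0}^{n-1}(\I+\beta\omega^j s)$; this product equals $\I$ because the elementary symmetric functions of $1,\omega,\dots,\omega^{n-1}$ vanish in degrees $1,\dots,n-1$ while $s^n=0$. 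Thus $g+\beta x$ is annihilated by the separable polynomial $t^n-1$, so $\cN_\beta\cong\kk[\bZ/n\bZ]$ is semisimple of dimension $n$.

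For the converse, let $\cN$ be semisimple. As recorded before \Cref{le.lrg-deg}, the map \Cref{eq:nhg} is then injective with image a coideal subalgebra $\kk G'=\kk\langle g^d\rangle$ of $\kk G$, where $d\mid n$. Hence $\cN\cong\kk G'$ is commutative and generated by the unique element $u\in\cN$ with free term $g^d$, and the free-term map $\cN\to\kk G'$ (the restriction of $\pi\colon H\to\kk G$) is injective, so an element of $\cN$ with vanishing free term is $0$. Everything reduces to determining which $u$ can occur. The engine is the coideal relation $(\varphi\otimes\id)\Delta(u)\in\cN$ for $\varphi\in H^*$, fed by the identity $\Delta(x^kg^j)=\sum_{i=0}^k\binom{k}{i}_\omega x^ig^{k-i+j}\otimes x^{k-i}g^j$ together with the injectivity just noted.

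First I would pin down the support of $u$. Applying $\id\otimes\pi$ to $\Delta(u)$ sends a monomial $x^kg^j$ to $x^kg^j\otimes g^j$, so $\Delta(u)\in H\otimes\kk G'$ forces $d\mid j$ for every monomial of $u$. Next, applying $\psi\otimes\id$ with $\psi$ supported on $\kk G$ and detecting a single value $j_0$ extracts the ``slice'' of $u$ collecting the monomials with $k+j\equiv j_0$, which lies in $\cN$; the slice with $j_0=d$ has free term $g^d$ and hence equals $u$ by injectivity, while every other slice has zero free term and vanishes. Combining the two constraints gives $u=\sum_{d\mid k}c_k\,x^kg^{d-k}$ with $c_0=1$. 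Grouping $\Delta(u)$ by its first tensor factor $x^ig^{d-i}$ — these being linearly independent across $i$ — then shows that $v_i:=\sum_{l}c_{i+l}\binom{i+l}{i}_\omega x^lg^{d-i-l}\in\cN$ for every $i$.

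The decisive step is the analysis of $v_1$. If $d\ge2$ then $c_1=0$, so $v_1$ has zero free term and therefore vanishes; letting $k_0\le n-1$ be the least index with $c_{k_0}\ne0$, the lowest term of $v_1$ is $c_{k_0}\binom{k_0}{1}_\omega x^{k_0-1}g^{d-k_0}$, forcing $c_{k_0}[k_0]_\omega=0$ and hence $c_{k_0}=0$, a contradiction; so $u=g^d$ and $\cN=\kk G'$. If $d=1$ then $v_1$ has free term $c_1$, so $v_1=c_1\I$ and all higher terms $c_{1+l}[1+l]_\omega x^lg^{-l}$ with $l\ge1$ vanish, giving $c_k=0$ for $k\ge2$ and $u=g+c_1x$; this yields $\kk G$ when $c_1=0$ and $\cN_{c_1}$ when $c_1\ne0$. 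In both cases the conclusion rests on $[m]_\omega\ne0$ for $1\le m\le n-1$ (equivalently $\binom{k}{i}_\omega\ne0$ for $0\le i\le k\le n-1$), valid since $\omega$ is a primitive $n^{\text{th}}$ root of unity. The main obstacle is precisely this bookkeeping: isolating the correct functional and first-factor components of $\Delta(u)$, and certifying that the relevant $q$-integers do not vanish.
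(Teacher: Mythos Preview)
Your proof is correct and follows a somewhat different organization than the paper's. You explicitly verify the semisimplicity of $\cN_\beta$ via the identity $(g+\beta x)^n=\I$, which the paper leaves implicit. For the converse, the paper proceeds in two stages: it first argues, by a case analysis on $\deg'(\psi(s))$ for the ``correction terms'' $\psi(s)$ attached to each $s\in G'$, that if $\cN\ne\kk G'$ then necessarily $G'=G$; only afterwards does it pin down the generator as $g+\beta x$. Your argument is more uniform: you first determine the exact shape $u=\sum_{d\mid k}c_k\,x^kg^{d-k}$ of the generator by combining the two slicing constraints coming from $\id\otimes\pi$ and from degree-zero functionals on the left leg, and then a single examination of the component $v_1\in\cN$ handles both $d\ge 2$ (forcing $u=g^d$) and $d=1$ (forcing $u=g+c_1 x$) at once via the nonvanishing of the $q$-integers $[m]_\omega$ for $1\le m\le n-1$. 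The paper's route is more hands-on and perhaps easier to discover; yours is tidier, avoids the ad hoc case split, and makes the role of the $q$-binomial formula for $\Delta(x^kg^j)$ completely explicit.
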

\begin{proof}
  Clearly, all subgroups $G'$ give rise to a Hopf subalgebra $\kk G'$ of $H$ as in the first bullet point above. We thus focus on showing that all {\it other} semisimple coideal subalgebras are of the form $\cN_{\beta}$ (noting also that all $\cN_{\beta}$ are indeed coideal subalgebras, as is easily seen).

  Suppose $\cN$ is a coideal subalgebra as in the statement. The composition \Cref{eq:nhg} identifies $\cN$ with some coideal subalgebra of $\kk G$, say $\kk G'$ for a subgroup $G'\le G$ of order $d|n$.

  {\bf Step 1: $G'=G$.}

  For each $s\in G'$ the subalgebra $\cN$ contains a unique element $s+\psi(s)$ with free term $s$ (i.e. such that $\psi(s)$ has no free term). Our assumption that $\cN$ is not $\kk G'$ means that $\psi:G'\to J(H)$ is not identically zero, and the conclusion we want to reach is that in this case $G'=G$ and $\psi(g)=\beta x$ for some scalar $\beta\ne 0$. 

  Let $s=g^{kd}\in G'$ be an element with $y:=\psi(s)\ne 0$. There are two possibilities to consider:
  
  {\bf (a): $\deg'(y)>1$.} Then, applying $(\varphi\otimes \id)\Delta$ to $y$ for some element $\varphi\in H^*_1$ will produce a non-zero element in $\cN\cap J(H)$, which we are assuming cannot exist. This leaves

  {\bf (b): $\deg'(y)=1$.} Suppose $y_1$ contains a non-zero scalar multiple of $xg^j$ for $j\ne kd-1$. We then have
  \begin{equation*}
    \Delta(y_1) \sim xg^j\otimes g^j+ g^{j+1}\otimes xg^j+\cdots,\ g^{j+1}\ne g^{kd}
  \end{equation*}
  where $\sim$ means `scalar multiple of' and $\cdots$ signifies the terms resulting from applying $\Delta$ to other monomials $\ne xg^j$ appearing in $y_1$.
  
  An application of $\varphi\otimes\id$ to $\Delta(g^{kd}+y)$ for $\varphi\in H_0^*$ dual to $g^{j+1}$ yields a non-zero element $z\in \cN$ with no free term (and $z_1\in \kk xg^j$). This contradicts the semisimplicity of $\cN$, meaning that $y_1\in \kk xg^{kd-1}$.
  
  Now, however, we have
  \begin{equation*}
    \Delta(y_1)\sim xg^{kd-1}\otimes g^{kd-1}+ g^{kd}\otimes xg^{kd-1},
  \end{equation*}
and setting $\varphi$ dual to $xg^{kd-1}$, $(\varphi\otimes\id)\Delta(y)$ will be a non-zero element of $\cN$ whose free term is (in the span of) $g^{kd-1}$. Since free terms of elements of $\cN$ are in $G'=\langle g^d\rangle$, we have $d|kd-1$ and hence $d=1$. In conclusion we have $G'=G$, as announced earlier. 

{\bf Step 2: finishing the proof.} Now denote $\psi(g)=g+y$. Since we are assuming $\cN\ne \kk G$ we have $\deg(y)\ge 1$. 

  Let $\varphi\in H^*$ be the dual element to $g$. The free term of
  \begin{equation*}
    (\varphi\otimes \id)\circ \Delta (g+y) 
  \end{equation*}
  is $g$, and since this element belongs to $\cN$ it must again be equal to $g+y$. It follows that the degree-$d$-component $y_d$ has the property that the $H_0\otimes H_d$-component of $\Delta(y)$ is in $\kk g\otimes H_d$, and hence $y_d\in \kk x^d g^{1-d}$. In other words, we have
  \begin{equation*}
    y=\psi(g)=\sum_{d>0} \alpha_d x^d g^{1-d}. 
  \end{equation*}
  For each term $y_d= \alpha_d x^d g^{1-d}$ the left hand tensorand in the $H_1\otimes H_{d-1}$-component of $\Delta(y_d)$ belongs to the span of $x$, and hence applying $\varphi\otimes \id$ to $\Delta(y)$ with $\varphi\in H_1^*$ dual to $x$ produces a non-zero element of degree $\deg(y)-1$ whose free term is $1$ if $\alpha_1\ne 0$ and $0$ otherwise.

  Since $\cN$ has no elements with no free terms the second possibility is ruled out. On the other hand, its only element with free term $1$ is the unit, so $\deg(y)=1$. This concludes the proof that $y=\beta x$ for some $\beta\in \kk^{\times}$.
\end{proof}

\begin{corollary}\label{cor.aut}
  For $n\ge 2$ the orbits of the coideal subalgebras of $H=H_{n^2}$ under the automorphism group of $H$ are as follows:
  \begin{itemize}
  \item a fixed point $\kk\langle g^d\rangle$ for each divisor $d|n$;
  \item a fixed point $\cN_{d,x}$ for each divisor $d|n$;
  \item the orbit consisting of all $\cN_{\beta}$ for $\beta\in \kk^{\times}$.    
  \end{itemize}
\end{corollary}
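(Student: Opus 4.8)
The plan is to deduce the corollary from the classification already in hand. By \Cref{th.tft-clsf} together with \Cref{th.ss-clsf}, every left coideal subalgebra of $H=H_{n^2}$ is one of the group algebras $\kk\langle g^d\rangle$, one of the non-semisimple $\cN_{d,x}$ of \Cref{eq:nd} (both families indexed by divisors $d\mid n$), or one of the $\cN_\beta$ for $\beta\in\kk^\times$. It therefore suffices to pin down the Hopf automorphism group $\Aut(H)$ and to compute its action on these three families.

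First I would show that $\Aut(H)=\{\phi_\lambda:\lambda\in\kk^\times\}$, where $\phi_\lambda(g)=g$ and $\phi_\lambda(x)=\lambda x$. Given a Hopf automorphism $\phi$, the element $\phi(g)$ is group-like; since the group-likes of $H$ are exactly the powers of $g$ and $\phi$ preserves orders, $\phi(g)=g^k$ with $\gcd(k,n)=1$. Then $\phi(x)$ is a $(g^k,1)$-skew-primitive element, i.e. $\Delta(\phi(x))=\phi(x)\otimes 1+g^k\otimes\phi(x)$. A direct computation with $\Delta(x^i g^j)$, using that the $\omega$-binomial coefficient $\binom{m}{\ell}_\omega$ vanishes for $0<\ell<m$ only when $n\mid m$, shows that the space of $(g^k,1)$-skew-primitives is $\kk(1-g^k)$, enlarged by $\kk x$ precisely when $k=1$.

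The key point is then that $\phi$ preserves the Jacobson radical $J(H)=\ker(H\to\kk G)$, so that $\phi(x)\in J(H)$, whereas $1-g^k\notin J(H)$ for $k\not\equiv 0\pmod n$. Hence the $(1-g^k)$-component of $\phi(x)$ must vanish; as $\phi$ is injective this forces $k=1$ and $\phi(x)=\lambda x$ with $\lambda\ne 0$. Conversely each $\phi_\lambda$ is readily checked to be a Hopf automorphism, since it respects $x^n=0$, $g^n=1$, $gxg^{-1}=\omega x$ and $\Delta(x)=x\otimes 1+g\otimes x$; thus $\Aut(H)\cong\kk^\times$.

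Reading off the orbits is then immediate. Since $\phi_\lambda(g^d)=g^d$, each $\kk\langle g^d\rangle$ is fixed; and since $\cN_{d,x}$ is generated by $g^d$ and $x$, while $\phi_\lambda$ carries these to $g^d$ and $\lambda x$ (which generate the same algebra), each $\cN_{d,x}$ is fixed as well, giving the two families of fixed points indexed by $d\mid n$. Finally $\phi_\lambda(g+\beta x)=g+\lambda\beta x$ generates $\cN_{\lambda\beta}$, so $\phi_\lambda(\cN_\beta)=\cN_{\lambda\beta}$; as $\lambda\mapsto\lambda\beta$ is a bijection of $\kk^\times$, the subalgebras $\cN_\beta$ form a single orbit. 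I expect the main obstacle to be the skew-primitive computation establishing $\phi(g)=g$; once $\Aut(H)$ is identified with $\kk^\times$, the orbit analysis is routine.
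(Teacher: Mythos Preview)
Your argument is correct, but it takes a heavier route than the paper does. The paper's proof of this corollary is one sentence: it invokes the classification (\Cref{th.tft-clsf}, \Cref{th.ss-clsf}) and the single observation that $x\mapsto\beta x$ extends to a Hopf automorphism for each $\beta\in\kk^\times$. That is already enough, because the three families in the classification are separated by invariants preserved by \emph{any} Hopf automorphism (semisimplicity, dimension, and containment in the span $\kk G$ of group-likes), so no automorphism can move a coideal subalgebra from one family to another or between distinct $d$'s; the $\phi_\lambda$ then witness transitivity on $\{\cN_\beta\}$. In particular the paper does \emph{not} need to know $\Aut(H)$ in full at this point.

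You instead front-load the work by computing $\Aut(H)\cong\kk^\times$ outright. The paper does this too, but only afterwards and as a separate result (\Cref{pr.autos}), and by a different mechanism: it uses the coradical filtration to locate $\phi(x)$ in $P_{1,g}$ and then the $\omega$-eigenvalue condition under conjugation by $g$ to kill the $(1-g)$-component. Your alternative, forcing $\phi(x)\in J(H)$ because algebra automorphisms preserve the Jacobson radical and then noting $1-g^k\notin J(H)$, is a clean and valid substitute for that step. So your proof is sound; it simply merges the content of \Cref{pr.autos} into the corollary rather than deferring it, and it trades the coradical/eigenvector argument for a radical argument.
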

\begin{proof}
  This follows from the classification in \Cref{th.tft-clsf}, the description provided by \Cref{th.ss-clsf}, and the fact that $x\mapsto \beta x$ extends to an automorphism of $H$ for every $\beta\in \kk^{\times}$.
\end{proof}

The following result is the analogue for right group-like projections. For $\beta\in \kk$ we denote by $P_{\beta}$ the idempotent
\begin{equation}\label{pbeta}
  \frac 1n \sum_{i=0}^{n-1} (g+\beta x)^i. 
\end{equation}

\begin{corollary}\label{cor.pr-orbit}
  Let $n\ge 2$. The orbits of right group-like projections under the automorphism group of $H_{n^2}$ are the singletons
  \begin{equation*}
    \left\{\frac 1d \sum_{i=0}^{\frac nd-1}g^{di}\right\}\text{ for }\; d|n
  \end{equation*}
and $\{P_{\beta}\ |\ \beta\in \kk^{\times}\}$.  
\end{corollary}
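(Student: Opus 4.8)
The plan is to transport the classification of coideal subalgebras in \Cref{cor.aut} across the correspondence $P\mapsto\cN_P$. By \Cref{th.tft-clsf} together with the remark following it (which rules out the non-semisimple $\cN_{d,x}$ as carriers of a counital integral, hence of a group-like projection), this correspondence is a bijection between the right group-like projections of $H=H_{n^2}$ and its semisimple coideal subalgebras. I would first record that it is $\Aut(H)$-equivariant: for $\phi\in\Aut(H)$ and any right group-like projection $P$, the element $\phi(P)$ is again a right group-like projection, since the defining identities in \Cref{eq:np} involve only $\Delta$, the multiplication and $\varepsilon$, all of which $\phi$ respects; and applying $\phi\otimes\phi$ to those identities yields $\cN_{\phi(P)}=\phi(\cN_P)$ at once. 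Consequently the orbits of right group-like projections under $\Aut(H)$ correspond bijectively to the orbits of semisimple coideal subalgebras, which by \Cref{cor.aut} are exactly the fixed points $\kk\langle g^d\rangle$ for $d\mid n$ together with the single orbit $\{\cN_\beta:\beta\in\kk^{\times}\}$.

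It then remains to name the projection attached to each orbit. For the subgroup algebra $\kk\langle g^d\rangle\cong\kk[\langle g^d\rangle]$ the associated central integral is the group-averaging idempotent of the cyclic subgroup $\langle g^d\rangle$, which is the displayed singleton; being a fixed point of $\Aut(H)$ on coideal subalgebras, it gives a fixed point on projections, i.e.\ a singleton orbit. For $\cN_\beta$, generated by $u:=g+\beta x$, I must verify that its integral is precisely $P_\beta=\tfrac1n\sum_{i=0}^{n-1}u^i$ from \Cref{pbeta}. Granting this, \Cref{le.n=np} gives $\cN_{P_\beta}=\cV_{P_\beta}=\cN_\beta$, so $P_\beta$ is the projection matched with $\cN_\beta$; and since the automorphism $x\mapsto\beta x$ carries $\cN_1$ to $\cN_\beta$ and hence $P_1$ to $P_\beta$, all the $P_\beta$ lie in one orbit, as claimed.

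The crux, and the only genuine computation, is the identity $u^n=1$: once this is known, $P_\beta$ is visibly the idempotent of the cyclic group algebra $\kk[u]\cong\cN_\beta$ and therefore its two-sided counital integral. Using the automorphism $x\mapsto\beta x$ it suffices to treat $\beta=1$. Writing $u=g(1+t)$ with $t:=g^{-1}x$, the relation $gx=\omega xg$ yields $gt=\omega tg$ and $t^n=0$; pushing every factor of $g$ to the left through the commuting factors then gives $u^n=g^n\prod_{k=0}^{n-1}(1+\omega^{-k}t)=\prod_{k=0}^{n-1}(1+\omega^{-k}t)$. As $\{\omega^{-k}\}_{k=0}^{n-1}$ runs over all $n^{th}$ roots of unity, every intermediate elementary symmetric function of the $\omega^{-k}$ vanishes, so the product collapses to $1+(-1)^{n-1}t^n=1$. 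I expect the main obstacle to be exactly this last paragraph — the commutation rearrangement and the symmetric-function cancellation — although each step is elementary once $t^n=0$ and $gt=\omega tg$ are in place.
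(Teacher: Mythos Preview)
Your proof is correct and follows the natural route that the paper leaves implicit (the paper states this corollary without a proof, treating it as immediate from \Cref{cor.aut} and the preceding classification). Your two contributions beyond what the paper spells out are the explicit equivariance argument for $P\mapsto\cN_P$ and the verification that $(g+\beta x)^n=1$, both of which you handle correctly; the symmetric-function argument via $t=g^{-1}x$ with $gt=\omega tg$ and $t^n=0$ is exactly the right way to see that the intermediate elementary symmetric functions of the $n^{\text{th}}$ roots of unity vanish, collapsing the product to $1$.

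One small remark: when you say ``the displayed singleton'' is the group-averaging idempotent of $\langle g^d\rangle$, note that the normalization in the paper's displayed formula appears to be a typo---the averaging idempotent of the order-$n/d$ subgroup $\langle g^d\rangle$ is $\tfrac{d}{n}\sum_{i=0}^{n/d-1}g^{di}$, not $\tfrac{1}{d}\sum$. Your argument identifies the correct projection regardless, so this does not affect the validity of your proof.
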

Note that the right group-like projection $P_\beta$ is not preserved by the antipode if $\beta\in\kk^\times$. This answers  the question of M.~Landstad and A.~Van Daele formulated in \cite{LV} where they in  particular have shown that it cannot happen in algebraic quantum groups, see  \cite[Proposition 1.6]{LV}.

As a separate matter, {\it every} automorphism of $H_{n^2}$ is of the form $x\mapsto \beta x$:

\begin{proposition}\label{pr.autos}
  Let $n\ge 2$ be an arbitrary positive integer. The map $\beta\mapsto \varphi_{\beta}$ where
  \begin{equation*}
    \varphi_{\beta}:x\mapsto \beta x,\ g\mapsto g
  \end{equation*}
  is an isomorphism between $\kk^{\times}$ and the group of automorphisms of $H_{n^2}$.
\end{proposition}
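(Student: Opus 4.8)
The plan is to first dispatch the easy assertions and then concentrate all effort on surjectivity. A direct computation with \Cref{def.tft} shows that each $\varphi_\beta$ (for $\beta\in\kk^\times$) respects the relations $g^n=1$, $x^n=0$, $gxg^{-1}=\omega x$ and the coproducts $\Delta(g)=g\otimes g$, $\Delta(x)=x\otimes 1+g\otimes x$, so it is a Hopf-algebra endomorphism; it is invertible with inverse $\varphi_{\beta^{-1}}$, and $\varphi_\beta\circ\varphi_{\beta'}=\varphi_{\beta\beta'}$, so $\beta\mapsto\varphi_\beta$ is a group homomorphism $\kk^\times\to\Aut(H_{n^2})$. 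Injectivity is immediate, since $\varphi_\beta(x)=\beta x$ recovers $\beta$. The whole content is therefore to show that every Hopf-algebra automorphism $\varphi$ equals some $\varphi_\beta$.

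First I would pin down $\varphi(g)$. Since $H_{n^2}$ is pointed with coradical $\kk G$, its group-like elements are exactly the powers $\{g^i\}$; as $\varphi$ permutes group-likes and respects their multiplication, it restricts to a group automorphism of $\langle g\rangle\cong\bZ/n\bZ$, whence $\varphi(g)=g^k$ for some $k$ with $\gcd(k,n)=1$. Applying $\varphi$ to $\Delta(x)=x\otimes 1+g\otimes x$ and using $\varepsilon(\varphi(x))=\varepsilon(x)=0$, the element $\varphi(x)$ must be a $(g^k,1)$-skew-primitive of counit zero, i.e. $\Delta(\varphi(x))=\varphi(x)\otimes 1+g^k\otimes\varphi(x)$.

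The technical heart is to classify such skew-primitives, and this is where I expect the main work. Using the $\bZ$-grading ($\deg g=0$, $\deg x=1$), the defining equation decouples degree by degree, since $\varphi(x)\otimes 1$ and $g^k\otimes\varphi(x)$ sit in bidegrees $(d,0)$ and $(0,d)$. Writing the degree-$d$ component as $\sum_j c_j x^d g^j$ and expanding $\Delta(x^d g^j)=\sum_{l=0}^d\binom{d}{l}_\omega\, x^l g^{d-l+j}\otimes x^{d-l}g^j$ via the quantum binomial theorem (with $\binom{d}{l}_\omega\ne 0$ for $0\le l\le d\le n-1$, as $\omega$ is a primitive $n$-th root of unity), the vanishing of the mixed bidegrees $(l,d-l)$ with $0<l<d$ forces every degree-$d$ component with $d\ge 2$ to vanish. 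The degree-$0$ and degree-$1$ parts are then analysed by hand: the degree-$0$ skew-primitives are exactly $\kk(g^k-1)$, while a degree-$1$ part $c\,x g^j$ survives only for $j=0$, and then only when $g^{j+1}=g^k$, i.e. when $k=1$. Hence the space of $(g^k,1)$-skew-primitives is $\kk(g^k-1)$ if $k\ne 1$, and $\kk(g-1)\oplus\kk x$ if $k=1$. The delicate point throughout is keeping careful track of the $g$-powers, so that linear independence of the monomials $x^i g^j$ can be invoked at each matching of bidegrees.

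Finally I would conclude. If $k\ne 1$ then $\varphi(x)=\lambda(g^k-1)$ lies in $\kk G$; but $g^k-1$ is semisimple with a nonzero eigenvalue (as $\gcd(k,n)=1$ makes $g^k$ of order $n$), hence not nilpotent, so $\varphi(x)^n=\varphi(x^n)=0$ forces $\lambda=0$ and contradicts the injectivity of $\varphi$. Thus $k=1$ and $\varphi(x)=\mu(g-1)+\beta x$. Imposing the surviving relation $\varphi(g)\varphi(x)\varphi(g)^{-1}=\omega\,\varphi(x)$, that is $g\,\varphi(x)\,g^{-1}=\omega\varphi(x)$, yields $\mu(1-\omega)(g-1)=0$, and since $\omega\ne 1$ this forces $\mu=0$. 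Injectivity of $\varphi$ then makes $\beta\ne 0$, so $\varphi=\varphi_\beta$ with $\beta\in\kk^\times$, completing the proof.
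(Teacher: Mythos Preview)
Your proof is correct and follows the same overall skeleton as the paper's: both arguments first restrict $\varphi$ to the group-likes, then classify the $(1,g^k)$-skew-primitives to determine $\varphi(x)$, and finally use the $\omega$-eigenvector condition for conjugation by $g$ to kill the $(g-1)$-term.

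The execution differs in two places. First, you compute the skew-primitive spaces by hand via the $\bZ$-grading and the quantum binomial theorem, whereas the paper invokes the coradical filtration and \cite[Theorem~5.4.1]{mont} to read off directly that $P_{1,g}=\mathrm{span}\{1-g,x\}$ is two-dimensional while $P_{1,s}$ is one-dimensional for $s\ne g$. Second, to force $\varphi(g)=g$ the paper simply observes that $\varphi$ permutes the spaces $P_{1,s}$ and must therefore fix the unique $s$ with $\dim P_{1,s}=2$; you instead allow $\varphi(g)=g^k$ and rule out $k\ne 1$ by noting that $\varphi(x)$ would then lie in $\kk(g^k-1)$, which is not nilpotent, contradicting $\varphi(x)^n=0$. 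The paper's dimension argument is a bit slicker and avoids the explicit binomial expansion; your route is more self-contained and does not appeal to the structure theorem for $H_{(1)}$. Both are perfectly valid.
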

\begin{proof}
  We have already observed that all $\varphi_{\beta}$ are indeed automorphisms; it remains to show that there are no others. To that end, let $\varphi$ be an automorphism of $H=H_{n^2}$. Since $H$ is generated by $g$ and $x$, it will be enough to describe the images of these two elements. 

Since $G$ is the group of group-like elements, $\varphi$ restricts as an automorphism of $G$. The component
  \begin{equation*}
    H_{(1)}:=\{x\in H\ |\ \Delta(x)\in H\otimes \kk G+\kk G\otimes H\}
  \end{equation*}
  of the coradical filtration \cite[$\S$5.2]{mont} of $H$ is
  \begin{equation*}
    H_0\oplus H_1 = \kk G\oplus\mathrm{span}\{xg^i\ |\ 0\le i\le n-1\}. 
  \end{equation*}
  For $s,t\in G$ denote
  \begin{equation*}
    P_{s,t}:=\{x\in H\ |\ \Delta(x) = x\otimes s+t\otimes x\},
  \end{equation*}
  i.e. the space of {\it $(s,t)$-primitive elements} \cite[$\S$5.4]{mont}. According to \cite[Theorem 5.4.1]{mont} $H_{(1)}$ is the direct sum of $\kk G$ and all $P'_{s,t}$ where the latter are arbitrary spaces satisfying
  \begin{equation*}
    P_{s,t} = \kk (s-t)\oplus P'_{s,t}. 
  \end{equation*}
  We have $xg^i\in P_{g^i,g^{i+1}}$, so $P_{1,g}=\mathrm{span}\{1-g,x\}$ and $P_{1,s}$ is one-dimensional for all $g\ne s\in G$.

  Since $\varphi$ permutes the non-trivial elements $s\in G$ it also permutes the spaces $P_{1,s}$, and hence it must fix the only element (namely $g$) whose associated primitive space $P_{1,g}$ is two-dimensional. In conclusion, we have $\varphi(g)=g$.

  As for $\varphi(x)$, it must be both an element of $P_{1,g}=\mathrm{span}\{1-g,x\}$ and an $\omega$-eigenvector for conjugation by $g$. The intersection of these two spaces is precisely $\kk x$, meaning that $\varphi(x)=\beta x$ for some $\beta\in \kk^{\times}$.
\end{proof}
\begin{remark}Let us consider the Taft Hopf algebra over the field of complex numbers. Given $z\in\CC$, $|z|=1$ there exists a uniquely determined $*$-structure on $H_{n^2}$ such that $g^* = g$ and $x^* = zx$.

Reasoning as in \Cref{pr.autos}  we can show that there are no other $*$-structures on $H_{n^2}$.  

Let us equip $H_{n^2}$ with the  $*$-structure   corresponding to $z=1$. Then for $\beta\in\RR$ $P_\beta\in H_{n^2}$ defined  \Cref{pbeta}  is  self adjoint right  group-like projection which is  not  left; this observation  strengthens the answer to the Landstad - Van Daele question. 
\end{remark}
\begin{remark}\label{weak_sweedl}
Let us consider the Sweedler's Hopf algebra, $H = H_{4}$. It has a basis $\{\I, g, x, y\}$ where $y = gx$, $g^2 = \I$, $gxg = -x$. Due to \Cref{cor.pr-orbit} the right group-like projections in $H$ are  of the form  $\I$, $\frac{1}{2}(\I+g)+\beta x$ where $\beta\in\kk$. By direct computations it  can be checked that weak group-like projections in $H_4$ (see \Cref{def_weak})  are of the form  $\I$ and  $  \frac{1}{2}(\I+g)+\beta x + \delta y$ for arbitrary $\beta,\delta\in\kk$. In particular the space of classes of weak group-like projections is infinite and can be identified with $\{\I\}\cup\{\frac{1}{2}(\I+g)\}\cup \CC \mathbb{P}^1$ where $\CC \mathbb{P}^1$ is $1$-dimensional complex projective space. 
\end{remark}

\section{Open problems}
We formulate  a list of questions  motivated by the results of the present paper.
\begin{enumerate}
\item Due to the result of \cite{LV} there does not exist a right group-like projection in an algebraic quantum group which is not two-sided. Can we prove this result for semisimple or cosemisimple Hopf algebras?  Let us note that for cosemisimple Hopf algebras it was recently proved under the assumption that $\cV_P = \cN_P$, see \cite[Remark 4.8]{Kasp_coint}.
\item Is there a finite dimensional Hopf algebra which has infinitely many classes of left coideal subalgebras?
\item Is there a finite dimensional Hopf algebra which has infinitely many two sided group-like projections?
    \item Is there a left coideal subalgebra $\cN$ with a two sided counital  integral $P$ such that $\cN$ is not quasi-Frobenius?
    \item Is there a right group-like projection $P$ such that $\cN_P$ is not semisimple? Is there a right group-like projection in $\cA$ such that $\cA$ is not faithfully flat over $\cN_P$?
\end{enumerate}


\bigskip

\subsection*{Acknowledgments}The authors are grateful to the anonymous referee for paying our attention to the paper of M. Koppinen, \cite{Kopp}. 
A.C. was partially supported by NSF grant DMS-1801011. PK was partially supported by the NCN (National Center of Science) grant
 2015/17/B/ST1/00085.


\bibliographystyle{plain}

\end{document}